\PassOptionsToPackage{dvipsnames}{xcolor}
\documentclass[11pt,reqno,oneside]{amsart}
\usepackage{amsmath, amssymb, graphicx, mathrsfs, dsfont, hyperref}
\usepackage{epsfig}
\usepackage{setspace}
\usepackage{amsthm}
\usepackage[top=1in, bottom=1in, left=1in, right=1in]{geometry} 
\usepackage{pgfplots}
\usepackage[dvipsnames]{xcolor}
\usepackage{mathtools}
\usepackage{skull, color}
\usepackage{float}
\usepackage{cite}
\usepackage{caption}
\usepackage{adjustbox}
\usepackage{enumerate}
\usepackage{qrcode}

\theoremstyle{definition}
\newtheorem{theorem}{Theorem}[section]
\newtheorem{conjecture}[theorem]{Conjecture}
\newtheorem{corollary}[theorem]{Corollary}

\newtheorem{lemma}[theorem]{Lemma}
\newtheorem{question}[theorem]{Question}

\newtheorem{remark}[theorem]{Remark}

\newcommand{\B}{\mathbf{B}}
\newcommand{\BB}[2]{\mathbf{B}_j^k}
\newcommand{\bov}[2]{\overline{\mathbf{B}_j^k}}

\renewcommand{\th}{\textsuperscript{th}}

\pgfplotsset{compat=1.18}

\begin{document}

\title[Integers that are not the Sum of Positive Powers]{Integers that are not the Sum of Positive Powers}

\author{Brennan Benfield and Oliver Lippard}
\address{Brennan Benfield: Department of Mathematics and Statistics, University of North Carolina at Charlotte, 9201 University City Blvd., Charlotte, NC 28223, USA}
\address{Oliver Lippard: Department of Mathematics and Statistics, University of North Carolina at Charlotte, 9201 University City Blvd., Charlotte, NC 28223, USA}

\thanks{The authors extend their sincerest gratitude to Dr. Sorosh Adenwalla for helpful suggestions that greatly improved the quality of this paper.}

\thispagestyle{empty}
\begin{abstract}
The generalized Waring problem asks exactly which positive integers \textit{cannot} be expressed as the sum of $j$ positive $k\textsuperscript{th}$ powers? Using computational techniques, this paper refines an approach introduced by Zenkin, establishes results for the individual cases $5 \le k \le 9$, and resolves conjectures of Zenkin and the OEIS. This paper further establishes theoretical results regarding the properties of the sets of integers that are not the sum of $j$ positive $k\textsuperscript{th}$ powers. The notion of Waring's problem is further extended to the finite sets of non-representable numbers where $G(1,k) < j < g(1,k)$. Improved computational techniques and results from Waring's problem are used throughout to catalog the sets of such integers, which are then considered in a general setting.
\end{abstract}


\maketitle

\smallskip
\noindent \textbf{Mathematics Subject Classifications.} 11P05, 11-04, 11P81, 11Y50\\
\smallskip
\noindent \textbf{Keywords.} Waring's problem, computational number theory, partitions
\section{Introduction}\label{Introduction}

Diophantus asked if every positive integer could be written as the sum of at most four perfect squares, which was famously proven by Lagrange \cite{Lagrange} in 1770. Waring \cite{Waring} quickly proposed the generalization: denote by $g(k)$ the least number $s$ such that every positive integer is the sum of at most $s$ positive $k\th$ powers. In this notation, Lagrange proved that $g(2)=4$. Waring initially conjectured that $g(3) \leq 9$ and $g(4) \leq 19$, and in general that $g(k) < \infty$. That $g(k)$ is finite for all $k$ was settled by Hilbert \cite{Hilbert} in 1932. Knowing there are only finitely many positive integers that are \textit{not} expressible as the sum of positive $k\th$ powers begs the question: Exactly which positive integers \textit{cannot} be written as the sum of exactly $j$ positive $k\th$ powers? 

Small values of $g(k)$ are known; notably Euler's son, J. A. Euler, \cite{Dickson4} is credited with the current lower bound: $2^k-\lfloor\left(\frac{3}{2}\right)^k\rfloor-2 \leq g(k)$ which has been verified for $6 \leq k \leq 471600000$ \cite{Kubina,Stemmler}. The conjecture is that this lower bound is actually the explicit formula for $g(k)$ and has all but been proven (see Kubina \& Wunderlich \cite{Kubina}, Mahler \cite{Mahler}, Niven \cite{Niven}, Pillai \cite{Pillai}, etc.). The sequence given by $g(k)$ as $k$ increases begins \cite{OEIS}:
\[
g(k) = 1, 4, 9, 19, 37, 73, 143, 279, \ldots\ \qquad \text{for}
\ k=1, 2, 3, \ldots\] 

Denote by $G(k)$ the least number $s$ such that every \textit{sufficiently large} positive integer is the sum of at most $s$ positive $k$\textsuperscript{th} powers. The current understanding of $G(k)$ is much less complete than that of $g(k)$. The only two known values are $G(2)=4$ (Lagrange in 1770 \cite{Lagrange}) and $G(4)=16$ (Davenport in 1939 \cite{Davenport2}), though upper and lower bounds exist for several small values:
\[
4 \leq G(3) \leq 7, \quad 6 \leq G(5) \leq 17, \quad 9 \leq G(6) \leq 24,\quad 8 \leq G(7) \leq 33,\quad 32 \leq G(8) \leq 42,
\]
\noindent et cetera. In general, explicit upper bounds for $G(k)$ have been established, first in 1919 by Hardy and Littlewood \cite{Hardy_Littlewood} who showed that $G(k) \leq (k-2)\cdot2^{k-1}+5$. 
Subsequently, the work of Vinogradov \cite{Vinogradov1934,Vinogradov1934.2,Vinogradov1934.3,Vinogradov1934.4,Vinogradov1934.5,Vinogradov1935,Vinogradov1935.2,Vinogradov1935.3,vinogradov1947,Vinogradov1959}, Tong \cite{Tong}, Chen \cite{Chen}, Karatsuba \cite{Karatsuba}, Vaughan \cite{Vaughan}, and Wooley \cite{Wooley}, to name a few, led to the recent publication in 2022 by Br\"udern \& Wooley \cite{BruedernWooley} that 
\[
G(k) \leq \lceil k(\log k + 4.20032)\rceil.
\]

Define a number to be $(j,k)$-representable if it is the sum of $j$ positive $k\th$ powers, and denote this representation a $(j,k)$-representation. Following the work of Zenkin \cite{Zenkin1, Zenkin2, Zenkin3} on the generalized Waring problem, denote by $G(1,k)$ the smallest integer such that for all $j \geq G(1,k)$, all but finitely many positive integers are $(j,k)$-representable, where the first input to the function refers to 1 being the smallest number permissible as a base. The classical Waring problem may be used to bound $G(1,k)$ and achieve related results, although it is clear there is no hope to achieve a bound for $G(1,k)$ lower than that of $G(k)$.

Let $\mathbf{B}_j^k$ denote the set of positive integers that are not the sum of \textit{exactly} $j$ positive $k\th$ powers (the numbers that are not $(j,k)$-representable). This set includes three pieces: the set of all $n<j$, a set $\mathbf{B}^k$ that is computed for each $k$, and a set $\overline{\mathbf{B}_j^k} = \{n-j : n > j, \in \mathbf{B}_j^k\} \setminus \mathbf{B}^k$.



Denote by $g(1,k)$ the smallest $j$ such that $\overline{\mathbf{B}_k^j}$ is empty. This paper determines specific values of $G(1,k)$ and $g(1,k)$, and computes the associated $\mathbf{B}$-sets for $3 \le j \le 9$. In general, a pattern emerges for all values of $k$: for small $j$, there are infinitely many positive integers that are not $(j,k)$-representable, until $j$ is large enough so that there are only finitely many positive integers are not $(j,k)$-representable (namely $\mathbf{B}^k \cup \overline{\mathbf{B}_{j}^k}$), until again $j$ is large enough at which point the set $\overline{\mathbf{B}_{j}^k}$ is empty. This phenomenon was summarized by Zenkin for all $k$.
\ref{Conjecture_1}, \ref{conjecture_2}, \& \ref{Conjecture3}.
\begin{theorem}\label{Conjecture_1} (Zenkin \cite{Zenkin3})
    For every $k \geq 2$, there exists an index $G(1,k)$ such that for all $j \geq G(1,k)$, all but finitely many positive integers are $(j,k)$-representable.
\end{theorem}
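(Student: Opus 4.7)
The plan is to combine Waring's theorem (the finiteness of $g(k)$) with a padding identity, reducing the theorem to a Frobenius-type counting question. Let $A_s$ denote the set of $(s,k)$-representable integers. Appending a $1^k$ turns any $(s,k)$-representation of $m$ into an $(s+1,k)$-representation of $m+1$, so $A_s + t \subseteq A_{s+t}$ for every $t \ge 0$. It therefore suffices to exhibit some threshold $J_0 = J_0(k)$ for which $A_{J_0}$ is cofinite in $\mathbb{Z}^{>0}$; the cofiniteness of $A_j$ for every $j \ge J_0$ then follows from the padding inclusion.

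Unpacking the structure of representations gives the clean equivalence: $n \in A_j$ if and only if there exist non-negative integers $(c_a)_{a \ge 2}$ with $\sum_{a \ge 2} c_a (a^k - 1) = n - j$ and $\sum_{a \ge 2} c_a \le j$. (Here $c_a$ counts the terms equal to $a^k$, leaving $j - \sum_{a \ge 2} c_a$ terms equal to $1^k$.) Since $\gcd_{a \ge 2}(a^k - 1) = 1$---for any prime $p$ one has $p^k - 1 \equiv -1 \pmod{p}$, so no prime divides every $a^k - 1$---the Sylvester--Frobenius theorem guarantees that every sufficiently large $N$ is a non-negative integer combination of $\{a^k - 1\}_{a \ge 2}$. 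Applied to $N = n - j$, this would yield $n \in A_j$ provided the side constraint on coefficient sums can be met.

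The main technical obstacle is enforcing $\sum c_a \le j$ uniformly as $n$ grows. A naive decomposition of $n - j$ using only small-$a$ generators requires $\Theta(n/(2^k - 1))$ coins, violating the bound for fixed $j$. This is resolved by taking $a$ of order $((n-j)/j)^{1/k}$, so that a handful of large-value generators carry most of the weight, with a small residual completed by generators of other values. A careful choice of two or three generators $a^k - 1$ with convenient gcd structure keeps $\sum c_a$ bounded by a constant $C(k)$ depending only on $k$; the theorem then follows with $J_0 := \max(g(k), C(k))$. A conceptually cleaner alternative that avoids the Frobenius bookkeeping is the Hardy--Littlewood circle method, which for $j$ above a $k$-dependent threshold yields an asymptotic formula for the number of $(j,k)$-representations of $n$ with positive singular series, directly implying the finiteness of $\mathbf{B}_j^k$. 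In either approach, the existence of $G(1,k)$ is all that is claimed; the sharp values and structure theorems computed in the remainder of the paper go well beyond what this plan provides.
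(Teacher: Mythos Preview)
Your padding reduction and the reformulation of $(j,k)$-representability as a coin problem in the generators $\{a^k-1: a\ge 2\}$ are both correct, and the observation that $\gcd_{a\ge 2}(a^k-1)=1$ is exactly the right structural input. However, there is a genuine gap at the sentence ``A careful choice of two or three generators $a^k-1$ with convenient gcd structure keeps $\sum c_a$ bounded by a constant $C(k)$.'' That claim is equivalent to the theorem itself: asking that every large $M=n-j$ be expressible with $\sum_{a\ge 2} c_a \le C$ is precisely asking that $A_C$ be cofinite. The heuristic of taking $a$ of order $(M/j)^{1/k}$ does not close the gap, because after subtracting one such large generator the residual is still of order $M^{(k-1)/k}$, and iterating greedily takes $\asymp \log\log M$ steps rather than $O_k(1)$; using two or three nearby generators $a^k-1,(a+1)^k-1,\ldots$ only hits an arithmetic progression modulo $(a+1)^k-a^k$, not every residue. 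So as written you have restated the difficulty rather than resolved it.

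The paper (following Zenkin) fills exactly this gap via the $n^*$ device of Theorem~\ref{n*_application}: one exhibits a \emph{single} integer $n^*$ that is $(j,k)$-representable for $g(k)$ consecutive values $d\le j<d+g(k)$, and then for any $n>n^*$ writes $n-n^*$ as a sum of $s\le g(k)$ positive $k$th powers and combines with the $(d+g(k)-s,k)$-representation of $n^*$. The existence of such an $n^*$ can in fact be deduced from your own Frobenius observation, but applied in the other direction: pick finitely many $a_1,\ldots,a_r\ge 2$ with $\gcd_i(a_i^k-1)=1$, let $F$ be their Frobenius number, and set $n^*=L\sum_i a_i^k$ for any $L>F+g(k)$; then breaking $t_i\le L$ copies of each $a_i^k$ into ones yields representation lengths $rL+\sum_i t_i(a_i^k-1)$, which covers an interval of length $g(k)$ once $\sum t_i(a_i^k-1)$ exceeds $F$. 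This is the missing step your argument needs. Your circle-method alternative is valid and would also suffice, but it imports analytic machinery far heavier than the statement requires.
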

\noindent Theorem \ref{Conjecture_1} may appear obvious from Hilbert's \cite{Hilbert} result that $G(k) \leq g(k)<\infty$, but recall that $G(2)=4$ even though there are infinitely many positive integers that are not the sum of $4$ positive squares.
\begin{conjecture}\label{conjecture_2}
    For every $k \geq 2$, there exists an index $g(1,k)$ and a finite set $\mathbf{B}^k$ such that for all $j \geq g(1,k)$, every positive integer $n$ is $(j,k)$-representable, provided $n > j-1$ and $n \neq j+\beta$ for $\beta \in \mathbf{B}^k$.
\end{conjecture}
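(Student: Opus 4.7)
The plan is to exploit the natural shift relating $(j,k)$-representations for consecutive values of $j$ via the identity $1^k = 1$. Set $T_j := \mathbf{B}_j^k \cap \{n : n \geq j\}$ and $U_j := T_j - j \subseteq \mathbb{Z}_{\geq 0}$. The set $T_j$ captures the nontrivial part of $\mathbf{B}_j^k$: every $n < j$ is automatically non-$(j,k)$-representable since any sum of $j$ positive summands is at least $j$, so these integers contribute only the uniform piece $\{1,\dots,j-1\}$ to $\mathbf{B}_j^k$ and carry no arithmetic information about $k$. The goal is then to show that $U_j$ stabilizes to a finite set as $j \to \infty$, and to name that stable set $\mathbf{B}^k$.

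The first step is a short monotonicity lemma. If $m \geq j$ is $(j,k)$-representable, then $m+1$ is $(j+1,k)$-representable by appending $1^k$. Contrapositively, if $n+1 \in T_{j+1}$ then $n \in T_j$, so $T_{j+1} \subseteq T_j + 1$; translating by $-(j+1)$ on both sides yields $U_{j+1} \subseteq U_j$, so $(U_j)_j$ is a nested decreasing sequence of subsets of $\mathbb{Z}_{\geq 0}$. The second step invokes Theorem \ref{Conjecture_1}: for every $j \geq G(1,k)$ the set $T_j$---and hence $U_j$---is finite. A nested decreasing sequence of finite sets must stabilize, so there is a least index $g(1,k) \geq G(1,k)$ with $U_j = U_{g(1,k)}$ for all $j \geq g(1,k)$; set $\mathbf{B}^k := U_{g(1,k)}$, which is finite by construction and independent of $j$.

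Unwinding the definitions, for every $j \geq g(1,k)$ and every integer $n \geq j$ with $n - j \notin \mathbf{B}^k$ we have $n \notin T_j$, so $n$ is $(j,k)$-representable. Since $n > j - 1$ is equivalent to $n \geq j$ for integer $n$, this yields the conjecture verbatim; it is also consistent with the paper's $\overline{\mathbf{B}_j^k} = U_j \setminus \mathbf{B}^k$ vanishing precisely when $j \geq g(1,k)$. The main obstacle is supplied essentially for free by Theorem \ref{Conjecture_1}: without the finiteness of $T_j$ at some index, the chain $(U_j)_j$ could \emph{a priori} fail to stabilize and no single finite $\mathbf{B}^k$ would exist. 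Conditional on Zenkin's theorem, the entire argument reduces to the one-line ``append a $1^k$'' shift, pigeonhole on a decreasing chain of finite sets, and the definition of $\mathbf{B}^k$ as the stable value.
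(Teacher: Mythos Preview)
Your argument is correct and in fact proves the statement, conditional on Theorem~\ref{Conjecture_1}. Note, though, that the paper records this as a \emph{conjecture} and never packages a formal proof; what it does supply are exactly the two ingredients you combine. The nesting $U_{j+1}\subseteq U_j$ is the paper's Theorem~\ref{subsets}, and the prose opening Section~\ref{Chain Section} already observes that by this chain of inclusion the shifted sets ``eventually become constant,'' with $\mathbf{B}^k$ defined as the limit. So your proof is precisely the synthesis the paper sketches but leaves implicit.

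The one real difference is how you obtain the nesting. You use the one-line shift ``append $1^k$'' to pass from a $(j,k)$-representation of $m$ to a $(j+1,k)$-representation of $m+1$; the paper instead proves Theorem~\ref{subsets} via the restricted-partition inequality $p^k(n,j)\le p^k(n+1,j+1)$ of Lemma~\ref{identity_1}. Your route is more elementary and self-contained. The paper's route, however, also delivers the equality case (Theorem~\ref{equality}), which is what lets one actually \emph{compute} $g(1,k)$ in each instance via Theorem~\ref{B_consistency}, whereas your pigeonhole on a decreasing chain of finite sets shows only that $g(1,k)$ exists.
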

\begin{remark}\label{Conjecture3}
    For $G(1,k) \leq j < g(1,k)$, there exists a finite, nonempty set $\overline{\mathbf{B}_{j}^k}$ such that every positive integer $n$ has a $(j,k)$-representation, provided $n > j-1$ and $n \neq j+\beta$ for $\beta \in \mathbf{B}^k\cup \overline{\mathbf{B}_{j}^k}$.
\end{remark}

The proof techniques used in this area were developed for $k=2$ independently by Dubouis \cite{Dubouis} and by Pall \cite{Pall}, but were adapted and generalized by Zenkin \cite{Zenkin2} for higher powers. Following his notation, let $n^*$ denote a positive integer that is the sum of $g(k)+d, g(k)+d-1, g(k)+d-2, \ldots, d$ positive $k\th$ powers. Ideally, $d=2$, but there is heuristic evidence (see Section \ref{Improvements}) that $d\geq3$ for all $k\geq6$. In reality, computing small values of $d$ for large values of $k$ becomes unattainable, and the computation is performed for the best candidate $n^*$. Table \ref{Table_Powers} lists the results of the best known candidate $n^*$ and the corresponding results for $k\leq9$.

\begin{table}[ht!]
    \centering
    \begin{tabular}{c|c|c|c|c|c}
         $k$ & $n^*$ & $d$ & $G(k)+d$ & $G(1,k)$ & $g(1,k)$\\
         \hline
         2 & 169 & 1 & 5 & 5 \cite{Dubouis, Pall}& 6 \cite{Dubouis, Pall}\\
         \hline
         3 & 1072 & 2 & 9 & $\leq9$ \cite{Zenkin1}\ \ \ \ \ \ \ \ \ & 14 \cite{Zenkin1}\ \ \ \ \ \ \ \\
         \hline
         4& 77900162 & 2 & 18 & $\leq18$ \cite{Zenkin1}\ \ \ \ \ \ \ \ \ \ & 21 \cite{Zenkin1}\ \ \ \ \ \ \ \\
         \hline
         5& 100000497376 & 3 & 20 & $11^*$ & $57^\text{\textdagger}$\\
         \hline
         6 & 41253168892& 5 & 29 & $18^*$ & $78^{\star\star}$\\
         \hline
         7 & 822480142011 & 7 & 40 & $25^*$ & $245^{\star\star}$\\
         \hline
         8& 17373783550950 & 9 & 51 & $47^*$ & $334^{\star\star}$\\
         \hline
         9 & 25636699123453928 & 14 & 64 & $121^*$ & $717^{\star\star}$
    \end{tabular}
    \caption{$G(1,k)$ and $g(1,k)$ for $2\leq k \leq9$; $^{\star\star}$New theorem, $^*$New conjectures, $^\text{\textdagger}$Conjectured by Zenkin \cite{Zenkin3}.}
    \label{Table_Powers}
\end{table}

The sets $\overline{\mathbf{B}_{j}^k}$ are verified for all $j\geq g(k)+d$. 
For each $k=5, 6, 7, 8, 9$, direct computation predicts the numeric upper bound, and additional sets $\overline{\mathbf{B}_{j}^k}$ for $j < g(k)+d$ are conjectured. Particular indices $j$ of note are $G(1,k)$ and $g(1,k)$, at which point, the conditions of Theorem \ref{B_consistency} are satisfied and the sequence has no surprises left. Table \ref{Table_Powers} uses the proven upper bounds for $G(k)$, although for nearly all $k$, it is conjectured that the upper bound is not tight.

This paper proves Zenkin's conjecture that $g(1,k)=57$, resolves several conjectures found in the Online Encyclopedia of Integer Sequences (OEIS) \cite{OEIS}, and determines bounds for $G(1,k)$ for $4<k<9$ for sufficiently large $n$. 


\section{Computational Methods}\label{Methods}


\subsection{Existence of Representations}\label{Computation}

For most of the results in this paper, it is only necessary to check whether or not a $(j,k)$-representation \textit{exists}, rather than actually finding the representation. The main idea behind many of the computational theorems is that if $n$ is $(j,k)$-representable, then $n+a^k$ is $(j+1,k)$-representable for any positive integer $a$. This can be seen by taking a $(j,k)$-representation of $n$ and adding $a^k$ to the end. Theorem \ref{n*_application} is an extension of the computational technique established by Zenkin \cite{Zenkin1}, generalized to allow $G(k)$ to be used, in addition to $g(k)$.
\begin{theorem}\label{n*_application}
    Suppose there exists a number $n^* > 0$ that is $(j,k)$-representable for all $d \le j < b+d$, where $b$ is chosen such that all positive integers $n > N$ can be written as a sum of at most $b$ positive $k\th$ powers. Then, all positive integers greater than $N + n_0$ are $(b+d,k)$-representable.
\end{theorem}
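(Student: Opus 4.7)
The plan is to use the standard "pad with $n^*$" trick: take any sufficiently large target $M$, peel off $n^*$ to reduce to a number that falls within the Waring range, and then rebuild a representation with exactly $b+d$ summands by choosing an appropriate representation of $n^*$.

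More concretely, let $M$ be a positive integer with $M > N + n^*$. Then $M - n^* > N$, so by the definition of $b$ there exists some $s$ with $1 \le s \le b$ such that $M - n^*$ is $(s,k)$-representable; write
\[
M - n^* = a_1^k + a_2^k + \cdots + a_s^k
\]
with each $a_i$ a positive integer. I would then aim to realize $n^*$ as a sum of exactly $b + d - s$ positive $k$th powers. This is the step where the hypothesis enters: because $1 \le s \le b$, the integer $j := b + d - s$ satisfies $d \le j \le b + d - 1$, which is precisely the range on which we are assuming $n^*$ is $(j,k)$-representable. Thus there exist positive integers $c_1, \ldots, c_{b+d-s}$ with
\[
n^* = c_1^k + c_2^k + \cdots + c_{b+d-s}^k.
\]
Concatenating the two representations yields
\[
M = a_1^k + \cdots + a_s^k + c_1^k + \cdots + c_{b+d-s}^k,
\]
a sum of exactly $b + d$ positive $k$th powers, establishing $(b+d,k)$-representability of $M$.

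The key step to flag is the interval matching: one must verify that as $s$ ranges over all possible summand counts permitted by the Waring bound (i.e., $1, 2, \ldots, b$), the complementary count $b + d - s$ stays inside $[d,\, b+d)$, which is exactly the hypothesized range for $n^*$. The lower cutoff $M > N + n^*$ is used twice — it forces $M - n^* > N$ so that the Waring representation exists, and it also guarantees $M - n^* \ne 0$ so that $s \ge 1$ (otherwise we might need a $(b+d, k)$-representation of $n^*$ itself, which is outside the assumed range). There is no serious obstacle; the content of the statement is essentially the bookkeeping that the range $d \le j < b+d$ is precisely what is needed to cover the deficit $b + d - s$ for every admissible $s$.
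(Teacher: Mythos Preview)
Your proof is correct and is essentially identical to the paper's argument: both subtract $n^*$ from the target, use the Waring bound to write the remainder as a sum of some $s$ (the paper's $b'$) positive $k$th powers with $1 \le s \le b$, and then fill in the remaining $b+d-s$ slots with a representation of $n^*$, which exists because $b+d-s \in [d, b+d-1]$. The only cosmetic difference is that the paper first writes the remainder with $b$ nonnegative terms and then counts the nonzero ones, whereas you go directly to $s$ positive terms.
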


\begin{proof}
    Let $n > n^* + N$ and consider when $j=b+d$. Since $n - n^* > N$, it can be written as 
    \[
    n-n^*=\sum_{i=1}^{b}x_i^k\ \quad \text{where}\ \ x_{b} \geq \ldots \geq x_1 \geq 0.
    \]
    Let $b'$ be the number of nonzero $x_i$'s, which must be positive, since $n^* > 0$. Then, it is possible to write the following $(b+d,k)$-representation of 
    \[
    n = \sum_{i=1}^{b'}x_i^k + \sum_{\iota=1}^{d+b-b'}y_\iota^k,
    \]
    where $\sum_{\iota=1}^{d+b-b'}y_\iota^k$ is a $(d+b-b',k)$-representation of $n^*$, which is assumed to exist by the condition in the theorem.
\end{proof}

\begin{theorem}\label{representation_existence}
    Suppose that there exist positive integers $n$ and $N$ such that for all $n < m < N$, $m$ is $(j,k)$-representable. If $a^k-(a-1)^k < N-n$, then for all $n+1 < m' < N+a^k$, $m'$ is $(j+1,k)$-representable.
\end{theorem}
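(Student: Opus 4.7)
The plan is to follow the same philosophy as Theorem \ref{n*_application}: if $m' - c^k$ admits a $(j,k)$-representation, then appending $c^k$ to that representation yields a $(j+1,k)$-representation of $m'$. It therefore suffices to show that for every $m'$ with $n+1 < m' < N+a^k$, at least one of the integers $m' - 1^k, m' - 2^k, \ldots, m' - a^k$ falls strictly inside the interval $(n, N)$, since then the hypothesis supplies a $(j,k)$-representation of that value.

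The key step is to set $s_c = m' - c^k$ for $c = 1, 2, \ldots, a$ and observe two things. First, this is a strictly decreasing sequence with $s_1 = m'-1 > n$ (from the lower bound on $m'$) and $s_a = m' - a^k < N$ (from the upper bound on $m'$). Second, consecutive differences $s_{c-1} - s_c = c^k - (c-1)^k$ are themselves nondecreasing in $c$, so the largest gap in the sequence is $a^k - (a-1)^k$, which by hypothesis satisfies $a^k - (a-1)^k < N - n$.

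These two observations reduce the claim to a short pigeonhole-style argument. Suppose for contradiction that no $s_c$ lies in $(n, N)$; then each $s_c$ satisfies either $s_c \leq n$ or $s_c \geq N$. Because $s_1 > n$ we must be in the regime $s_1 \geq N$, and because $s_a < N$ we must have $s_a \leq n$. Let $c^*$ be the smallest index at which $s_{c^*} < N$; then $s_{c^*-1} \geq N$ and $s_{c^*} \leq n$, forcing $s_{c^*-1} - s_{c^*} \geq N - n$, which contradicts the gap bound $c^k - (c-1)^k \leq a^k - (a-1)^k < N - n$. Hence some $c^* \in \{1, \ldots, a\}$ satisfies $n < s_{c^*} < N$, and $m' = s_{c^*} + (c^*)^k$ is $(j+1,k)$-representable.

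I do not expect a genuine obstacle here; the result is essentially the observation that strictly increasing finite differences $c^k - (c-1)^k$ cannot leap across a guaranteed block of $(j,k)$-representable integers whose length exceeds the maximal such difference. The only real care needed is bookkeeping with strict versus weak inequalities, in particular using $m' > n+1$ (rather than $m' \geq n+1$) to secure $s_1 > n$ so that open-interval membership is unambiguous.
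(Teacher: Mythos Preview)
Your proof is correct and follows essentially the same approach as the paper's: both arguments select the smallest index $c^*$ (the paper calls it $b$) for which $m'-(c^*)^k$ drops below $N$, and then use the gap bound $c^k-(c-1)^k\le a^k-(a-1)^k<N-n$ to force $m'-(c^*)^k>n$. Your presentation via the decreasing sequence $s_c$ and the monotonicity of consecutive differences is a clean packaging of the same idea.
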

\begin{proof}
    If there exists some $b$ such that $n < m'-b^k < N$, then $m'-b^k = \sum_{i=1}^j c_i^k$ by the assumption that all numbers between $n$ and $N$ $(j,k)$-representable Hence, $m'$ is $(j+1,k)$-representable as $m' = \sum_{i=1}^{j+1} c_i^k$, where $b = c_{j+1}$. The remainder of the proof is devoted to showing that such a $b$ exists. Choose $b=\min(\{b: m'-b^k < N\})$. Since $m' < N+a^k$, $m'-a^k < N$, so $b \le a$. Suppose towards a contradiction that $m'-b^k \le n$. Since $b \le a$, $b^k-(b-1)^k < a^k-(a-1)^k < N-n$. Thus, $m'-(b-1)^k \le n+b^k-(b-1)^k \le n+a^k-(a-1)^k < N$, so the original value of $b$ must not have been minimal. Therefore, every integer between $n+1$ and $N+a^k$ is $(j+1,k)$-representable.
\end{proof}

To calculate sets $\mathbf{B}_j^k$, it is necessary to determine which of a wide range of numbers are $(j,k)$-representable. This leads to the memory-intensive algorithm to determine the existence of representations. This sieve-type algorithm is based on the principle that if $n_1$ is $(j_1,k)$-representable and $n_2$ is $(j_2,k)$-representable, then $n_1+n_2$ is $(j_1+j_2,k)$-representable by adding the representations of $n_1$ and $n_2$. Setting $j_2$ to 1 simplifies the algorithm by reducing the possible values of $n_2$ to perfect $k\th$ powers. The first step is to compute whether each $n$ in the interval $0 < n < N$ is $(j,k)$-representable for some large $N$; this data can be stored as a list (or string) of 0s and 1s. Next, the list corresponding to $j+1$ is computed according to the following rule: $n$ is $(j+1,k)$-representable if and only if there exists some $a$ such that $n-a^k$ is $(j,k)$-representable; this data is assumed to be fed into the algorithm. This process is then repeated until the desired $j$ is achieved, with runtime proportional to $j$. While requiring $2N$ bits of RAM, this algorithm is very computationally efficient for computing all $(j,k$)-representable numbers less than $N$ for reasonably small $j$.

\section{Computing $n^*$}

The goal of this section is to find a value of $n^*$ that minimizes $d$ (thus reducing the bound for $G(1,k)$). While the theoretical lower limit for $d$ is 2 with all exponents $k > 2$, due to Fermat's Last Theorem, this theoretical limit is difficult and perhaps impossible to achieve. For sufficiently large $j$ and $n$, a $(j,k)$-representation is almost certain to exist; hence, to find a suitable $n^*$, the biggest challenges are the smallest values of $j$. Candidate $n^*$ values that are $(j,k)$-representable for $j=d, d+1$, and for several more $j$, are generated using a sieve-like approach. First, a value of $d$ must be chosen, as well as an interval $(n,N)$ in which to search. Then, $(d,k)$-representations that evaluate to between $n$ and $N$ are computed, resulting in a list (similar to the algorithm above) of $(d,k)$-representable numbers. Next, $(d+1,k)$-representations in the interval are computed, but only recorded if they evaluate to one of the numbers listed in the previous step. The same process is completed for $d+2$ and $d+3$, and the output is the numbers recorded in the final step. Another way to generate candidate values of $n^*$ is to start with a number $\nu$ that is $(\delta,k)$-representable and $(\delta+1,k)$-representable. Then, $n^* = 2\nu$ can be expressed as a sum of any combination of these two representations, which makes it $(2\delta,k)$-representable, $(2\delta+1,k)$-representable, and $(2\delta+2,k)$-representable; hence, $d=2\delta$. While this approach does not minimize $n^*$, it reduces the number of representations that have to be evaluated, which provides an alternate pathway if the original algorithm's runtime is excessive. This is how the $n^*$ value for ninth powers was obtained.

The result of this process is likely to be a small collection of numbers, and subsequent values of $j$ are so large that it is onerous to calculate all $(j,k)$-representations in a certain interval. This need to verify the existence of a $(j,k)$-representation for large $n$ leads to the second, computation-intensive algorithm. This algorithm, which actually computes a representation if one exists, performs a similar function to Wolfram Mathematica's \texttt{PowersRepresentations} function. The representation is assumed to be written in non-increasing order, with the largest elements assigned first. This often decreases the number of possible values that must be tested, which in turn decreases runtime. Note that the largest element must be between $\left(\frac{n}{j}\right)^{1/k}$ and $n^{1/k}$ (which are often close to each other), while the smallest element may be any value between 1 and $\left(\frac{n}{j}\right)^{1/k}$. For each choice of the largest value, a new instance of the algorithm is run with $j-1$ in place of $j$, until $j=1$, at which point either $n$ is a perfect $k\th$ power, and a representation is outputted, or it is not, and the algorithm makes another guess for the largest number in the representation. With the use of some memory, the algorithm may be further improved by computing a list of $(j',k)$-representable numbers less than $n$ for suitably small $j'$, usually 4. When an instance is opened with $j=4$, the algorithm checks if $n$ is in the list; if it is, then the algorithm continues, if not, then the instance closes and changes one of its earlier guesses. This quick memory check saves the algorithm 4 iterations when it is certain to fail.

\section{The Known Cases: Squares, Cubes, \& Fourth Powers} 
\subsection{Squares} To justify Conjectures \ref{Conjecture_1} and \ref{conjecture_2} with known results, consider the smallest case, when $k=2$. There are infinitely many positive integers that are not $(j,2)$-representable for $j=1,2,3,4$. It is well known \cite{Burton} that a positive integer is $(2,2)$-representable if and only if its prime factors congruent to 3 modulo 4 have even exponents. 

Exactly which positive integers have a $(3,2)$-representation is still open, and depends on determining all discriminants of binary, positive definite quadratic forms with exactly one class in each genus (see Chowla \cite{Chowla}), but  the work of Hurwitz \cite{Hurwitz}, Pall \cite{Pall}, and Grosswald, Calloway, \& Calloway \cite{Grosswald_C_C} showed that there is a finite set $\mathbf{T}$ such that every positive integer $n$ has a $(3,2)$-representation provided $n \neq 4^\alpha(8m+7)$ and $n=4^\alpha t$ for $t\in\mathbf{T}$ and $\alpha=0,1,2,\ldots$. The set $\mathbf{T}$ is conjectured to be $\mathbf{T} = \{1, 2, 5, 10, 13, 25, 37, 58, 85, 130\}$ \cite{Grosswald_C_C}, and Weinberger \cite{Weinberger} showed that the set $\mathbf{T}$ may contain at most one more element, which must be greater than $5\cdot10^{10}$.

Lagrange \cite{Lagrange} managed to show that $G(2)=4$ \textit{despite} the fact that there are infinitely many positive integers that are not $(4,2)$-representable, a fact first conjectured in 1638 by Descartes \cite{Descartes} and proven in 1911 by Dubouis \cite{Dubouis}:
\[
\text{OEIS A000534 \cite{OEIS}}: 1, 2, 3, 5, 6, 8, 9, 11, 14, 17, 24, 29, 32, 41, 56, 96, 128, 224, \ldots
\]
Lagrange achieved his proof only because every member of this list is a sum of $1$ or $2$ or $3$ positive squares. Grosswald \cite{Grosswald}, elegantly summarizes the result by introducing the set: \[\mathbf{B}^2=\{1, 2, 4, 5, 7, 10, 13\}.\]
Dubouis showed that every positive integer $n$ is $(4,2)$-representable, provided $n \neq 1, 2, 3$ and $n \neq 4+b$ for $\beta\in\mathbf{B}^2\cup\{25,37\}$ and $n \neq 2*4^\alpha, 6*4^\alpha, 14*4^\alpha$ for $\alpha=0,1,2,\ldots$.

 For $j \geq 5$, the situation is much different - there are only \textit{finitely} many integers that are not $(5,2)$-representable. This seems like a consequence of Hilbert's \cite{Hilbert} result, that $G(k) \leq g(k)<\infty$, but recall there are infinitely many positive integers that are not $(4,2)$-representable, yet $G(2)=4$. Determining precisely which integers are not $(5,2)$-representable was accomplished by Dubouis \cite{Dubouis}, Niven \cite{Niven2,Niven3} and Pall \cite{Pall}. Every positive integer $n$ is $(5,2)$-representable provided $n>j-1$ and $n \neq j+\beta$ for $\beta\in\mathbf{B}^2 \cup \{28\}$.

 Finally, for all $j\geq6$, Dubouis \cite{Dubouis}, Niven \cite{Niven2,Niven3} and Pall \cite{Pall} showed that every positive integer $n$ is $(j,2)$-representable provided $n>j-1$ and $n \neq j+\beta$ for $\beta\in\mathbf{B}^2$.

\subsection{Cubes}
What positive integers cannot be written as the sum of exactly $j$ positive cubes? For $k=3$, Zenkin \cite{Zenkin1} was the first to compute the set $\mathbf{B}^3$:
\begin{align*}
\mathbf{B}^3=\{&1, 2, 3, 4, 5, 6, 8, 9, 10, 11, 12, 13, 15, 16, 17, 18, 19, 20, 22, 23, 24, 25, 27,\\ 
& 29, 30, 31, 32, 34, 36, 37, 38, 39, 41, 43, 44, 45, 46, 48, 50, 51, 53, 55, 57,\\
& 58, 60, 62, 64, 65, 67, 69, 71, 72, 74, 76, 79, 81, 83, 86, 88, 90, 93, 95, 97,\\
&100, 102, 107, 109, 114, 116, 121, 123, 128, 135, 142, 149\}.
\end{align*}
Zenkin goes on to prove that $G(1,3)\leq9$ and that $g(1,3)=14$ but does not compute the corresponding sets $\overline{\mathbf{B}_{j}^3}$ for $j<14$. In fact, the sets $\mathbf{B}_{11}^3,\ \mathbf{B}_{10}^3,\ \text{and}\  \mathbf{B}_{9}^3$ are conjectured in the Online Encyclopedia of Integer Sequences (OEIS) \cite{OEIS}, sequences A332111 \cite{OEIS}, A332110 \cite{OEIS}, and A332109 \cite{OEIS} respectively, which have now been determined with certainty. 
\begin{theorem}\label{theorem_3_cubes}
    For $j = 13, 12, 11, 10$ and $9$, every positive integer $n$ is the sum of $j$ positive cubes provided $n > j-1$ and $n \neq j+\beta$ for $\beta \in \mathbf{B}_j^3 \cup \overline{\mathbf{B}_{j}^3}$ where:
    \begin{align*}
        \overline{\mathbf{B}_{13}^3} = \{&212\}\\
        \overline{\mathbf{B}_{12}^3} = \{&186, 205, 212\}\\
        \overline{\mathbf{B}_{11}^3} = \{&160, 179, 186, 198, 205, 212, 310\}\\
        \overline{\mathbf{B}_{10}^3} = \{&153, 160, 172, 179, 186, 191, 198, 205, 212, 247, 284, 303, 310, 364\}\\ 
        \overline{\mathbf{B}_{9}^3} = \{&153, 160, 165, 172, 179, 184, 186, 191, 198, 205, 212, 221, 238, 240, \\
        &247, 258, 277, 284, 296, 301, 303, 310, 338, 357, 364, 413, 462\}.
    \end{align*}
\end{theorem}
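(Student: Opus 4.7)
The plan is to prove each set of exceptional integers by a two-phase computation in the style of Section \ref{Computation}. First, I would establish for each $j \in \{9, 10, 11, 12, 13\}$ an explicit constant $N_j$ beyond which every integer is $(j, 3)$-representable. Second, I would run the sieve algorithm to enumerate all non-$(j,3)$-representable integers in $[1, N_j]$, and decompose the resulting list into the three pieces $\{n : n < j\}$, $j + \mathbf{B}^3$, and $j + \overline{\mathbf{B}_j^3}$. The third piece would then be read off directly and compared to the claimed sets.

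To obtain the cutoffs, I would start with $j = 11$, where Theorem \ref{n*_application} applies most cleanly. Taking $k = 3$, $n^* = 1072$, $d = 2$, $b = g(3) = 9$, and $N = 0$, the hypothesis reduces to exhibiting $(j, 3)$-representations of $1072$ for each $2 \le j \le 10$. This is a finite check carried out via the second algorithm of Section \ref{Computation} (for instance, $1072 = 9^3 + 7^3$ handles $j = 2$, and longer representations are obtained by splitting larger cubes and padding with $1^3$). The theorem then yields that every $n > 1072$ is $(11, 3)$-representable, so $N_{11} = 1072$. For $j = 12$ and $j = 13$, appending one or two copies of $1^3$ to an $(11, 3)$-representation immediately gives $N_{12} = 1073$ and $N_{13} = 1074$. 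For $j = 9$ and $j = 10$, which sit below the threshold $b + d = 11$, one cannot apply Theorem \ref{n*_application} with the same parameters; instead I would either sharpen $b$ by using the bound $G(3) \le 7$ with an explicit tail estimate, or import Zenkin's proof that $G(1, 3) \le 9$, which already provides explicit (though larger) cutoffs $N_9$ and $N_{10}$.

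The final phase is the sieve itself. Starting from the list of cubes (the $(1,3)$-representable numbers) and iterating the rule that $n$ is $(j+1, 3)$-representable iff $n - a^3$ is $(j, 3)$-representable for some $a \ge 1$, I would compute the characteristic string of $(j, 3)$-representable integers up to a bound slightly exceeding $\max_j N_j$. Extracting the zeros, subtracting $j$, and removing elements of $\mathbf{B}^3$ produces $\overline{\mathbf{B}_j^3}$, which I would verify coincides with the displayed set. I expect the principal obstacle to be establishing a genuinely usable $N_9$: Theorem \ref{n*_application} does not directly target $j < b + d$, so the argument must either invoke a sharper absolute bound (the state of the art for $G(3)$ is only an inequality) or lean on Zenkin's existing machinery; either way, the sieve must then be run far enough to exhaust the tail predicted by that bound. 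The remainder — checking the representations of $1072$, executing the sieve, and matching the output against the tables — is routine but memory-heavy, and is exactly the regime in which the algorithm of Section \ref{Computation} is designed to operate.
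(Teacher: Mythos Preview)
Your overall architecture matches the paper's: use $n^*=1072$ with $d=2$ in Theorem~\ref{n*_application}, then sieve below the resulting cutoff. The part you flag as the obstacle---getting an \emph{explicit} $N_9$---is exactly where the paper supplies the ingredient you are missing, namely Lemma~\ref{Siksek} (the Jacobi--Siksek theorem). That lemma says every positive integer outside an explicit $17$-element set, the largest of which is $454$, is a sum of at most $7$ positive cubes. Feeding $b=7$ and $N=454$ (rather than your $b=g(3)=9$, $N=0$) into Theorem~\ref{n*_application} gives $b+d=9$ directly: every $n>1072+454=1526$ is $(9,3)$-representable, and the sieve up to $1526$ then produces $\overline{\mathbf{B}_9^3}$ and, a fortiori, the sets for $j=10,11,12,13$.

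Your fallback suggestion---``import Zenkin's proof that $G(1,3)\le 9$, which already provides explicit cutoffs $N_9$ and $N_{10}$''---does not work. Zenkin established $G(1,3)\le 9$ only for sufficiently large $n$, precisely because at the time one had $G(3)\le 7$ without an effective $N$; the paper is explicit that Zenkin did \emph{not} compute $\overline{\mathbf{B}_j^3}$ for $j<14$, and that it was the 2016 completion of Jacobi's conjecture (Siksek) that made the present theorem provable. So of your two proposed resolutions, the first (``sharpen $b$ to $7$ with an explicit tail estimate'') is exactly the paper's route, but it is not something you would have to manufacture: it is a citable theorem.
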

Also found in the OEIS, the sequence A332107 lists the conjectured set $\mathbf{B}_7^3$, A057907 lists the conjectured set $\mathbf{B}_6^3$, A057906 \cite{OEIS} lists the set $\mathbf{B}_5^3$ related to a conjecture of Romani \cite{Romani} and the sequence A057905 \cite{OEIS} lists part of the set $\mathbf{B}_4^3$. These sets are becoming too large to display here, but the ones of reasonable size will be included for completeness in the \nameref{Appendix}. 

The sets $\mathbf
 {B}_3^3$ and $\mathbf
 {B}_2^3$ (and of course $\mathbf
 {B}_1^3$) of positive integers that cannot be expressed as the sum of three or two (or one) positive cubes are known to be infinite:  Davenport \cite{Davenport} showed that every positive integer congruent to $4$ or $5$ modulo $9$ cannot be expressed as the sum of three positive cubes, 
 Euler \cite{Euler} proved the particular case of Fermat's last theorem, that no cube is the sum of two positive cubes. 

\subsection{Fourth Powers}
What positive integers are not the sum of exactly $j$ positive fourth powers? Hardy and Littlewood \cite{Hardy_Littlewood} showed that $G(4) \leq 19$ and shortly afterwards Davenport \cite{Davenport2} proved that $G(4)=16$. Numerical studies by Deshouillers, Hennecart, Kawada, Landreau, and Wooley \cite{Kawada_Wooley,Deshouillers2000,Deshouillers_Kawale_Wooley} collectively determined the $\mathbf{B}_{16}^4$. \begin{lemma}[Deshouillers, Hennecart, Kawada, Landreau, Wooley]\label{lemma_for_fourths}
Every positive integer greater than $13792$ can be expressed as a sum of at most 16 nonnegative fourth powers (which they call biquadrates). 
\end{lemma}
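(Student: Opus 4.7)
The plan is to combine an analytic lower bound with a large-scale sieve verification, in the spirit of the cited works of Kawada--Wooley and Deshouillers--Hennecart--Landreau. First, an asymptotic argument produces some (necessarily large) explicit threshold $N_0$ above which every integer is provably a sum of sixteen biquadrates; second, the residual window $13792 < n \le N_0$ is cleared by direct computation; third, one certifies by exhaustive tabulation that $13792$ is genuinely the largest integer that fails to admit such a representation.

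For the analytic step, I would invoke the Hardy--Littlewood circle method for sums of sixteen fourth powers. Davenport's theorem $G(4)=16$ already guarantees an asymptotic formula; the task is to make every constant explicit. The singular series $\mathfrak{S}(n)$ can be bounded below by a positive absolute constant through an elementary local-solvability analysis at each prime (the real obstruction for biquadrates is mod $16$), while the minor-arc contribution is controlled by Vaughan's mean-value inequalities for fourth-power Weyl sums with Wooley-type sharpening. The outcome is an asymptotic of the shape
\[
r_{16,4}(n) = \mathfrak{S}(n)\,\Gamma\!\left(\tfrac{5}{4}\right)^{16}\Gamma(4)^{-1}\,n^{3} + O\!\bigl(n^{3-\delta}\bigr),
\]
from which a numerical $N_0$ falls out by comparing main term to error term.

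For the computational step, I would run a sieve-based ascent akin to Theorem \ref{representation_existence}. Starting from a block whose membership in the $(16,4)$-representable set has been verified by brute force, the interval is extended upward one biquadrate at a time: once an interval $(n,N)$ of solid $(16,4)$-representability satisfies $a^{4}-(a-1)^{4}<N-n$, the property propagates to $(n+1,N+a^{4})$ at the next level, and "at most $16$" is recovered by allowing the trailing summand to be $0^{4}$. Iterating this ascent upward until the verified range surpasses $N_0$ closes the gap, and recording every $n$ that fails during the sieve supplies the exceptional set from which the constant $13792$ is read off as the maximum.

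The main obstacle is bridging the analytic threshold $N_0$ --- which, even with the sharpest available Weyl-sum inputs, is astronomical --- and the upper limit of a feasible bit-sieve. The strategy in the cited papers is to refine the major-arc analysis using Vinogradov's mean-value method for short exponential sums, together with auxiliary representation results (for instance that most $n$ are already sums of fewer biquadrates, so that a single biquadrate $a^{4}$ near $n$ reduces the problem to a smaller instance that is easier to solve). Pushing $N_0$ down into the billions, and then designing a memory-efficient sieve to traverse the remaining range, is where almost all of the effort lies; the identification of the explicit constant $13792$ is comparatively routine once those two ranges overlap.
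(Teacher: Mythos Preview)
The paper does not give a proof of this lemma: it is quoted verbatim from the literature (the cited works of Kawada--Wooley and Deshouillers--Hennecart--Landreau) and used as a black box to feed Theorem~\ref{theorem_fourths}. So there is no in-paper argument to compare your proposal against.

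Your outline is, in broad strokes, the strategy of the original authors: an explicit circle-method bound for large $n$ combined with an exhaustive machine verification below it. One technical point in your computational step deserves scrutiny, however. The ascent you describe via Theorem~\ref{representation_existence} increments $j$ by one: from $(j,k)$-representability on $(n,N)$ you obtain $(j{+}1,k)$-representability on $(n{+}1,N{+}a^{k})$. Allowing the new summand to be $0^{4}$ does not salvage ``at most $16$'': if $m'-a^{4}$ genuinely needs sixteen nonzero biquadrates, then $m'$ needs seventeen, and you have no mechanism to drop back down. The actual verifications in \cite{Deshouillers2000,Deshouillers_Kawale_Wooley} do not ascend this way; they rely instead on direct sieving together with structural input (for instance, that almost all integers in the range are already sums of far fewer biquadrates, so that a greedy subtraction of a large $a^{4}$ leaves a residue needing at most fifteen). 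Your plan would need to incorporate an ingredient of that kind to close the interval without letting $j$ creep upward.
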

\noindent Zenkin \cite{Zenkin3} used the fact that $G(4)=16$ to prove $G(1,4) \le 18$. However, he was unable to compute the set $\overline{\mathbf{B}^4_{18}}$because Lemma \ref{lemma_for_fourths} had not yet been proved, though he did determine the set
\[
\mathbf{B}^4 = \{ 1, 2, 3, 4, 5, 6, \ldots 2561, 2566, 2581, 2596, 2611, 2626, 2641 \}.
\]
\begin{theorem}[Zenkin \cite{Zenkin3}]\label{Theorem_2_fourths} 
    Every positive integer $n$ is the sum of $j \ge 21$ positive fourth powers provided $n > j-1$ and $n \neq j + b$ for $b \in \mathbf{B}^4$.
\end{theorem}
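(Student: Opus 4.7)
The plan is to reduce everything to the single base case $j = 21$ by a one-step induction, and then to handle that base case by combining Theorem \ref{n*_application} with Lemma \ref{lemma_for_fourths} and the sieve of Section \ref{Computation}. For the inductive step, suppose the conclusion is known at some $j_0 \ge 21$, and let $n > j_0$ satisfy $n \ne (j_0 + 1) + b$ for every $b \in \mathbf{B}^4$. Then $n - 1 > j_0 - 1$, and $n - 1 = j_0 + b$ would force $n = (j_0 + 1) + b$, which is excluded; so by the inductive hypothesis $n - 1$ admits a $(j_0, 4)$-representation, and appending a single $1^4$ yields a $(j_0 + 1, 4)$-representation of $n$.

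For the base case I would invoke Theorem \ref{n*_application} with $k = 4$, $d = 2$, $b = 16$, and $N = 13792$ (both supplied by Lemma \ref{lemma_for_fourths}), and $n^* = 77900162$. The required hypothesis -- that $n^*$ admits a $(j, 4)$-representation for every $2 \le j \le 17$ -- is precisely what the row $k = 4$, $d = 2$ of Table \ref{Table_Powers} records, and would be certified directly with the sieve of Section \ref{Computation}. Theorem \ref{n*_application} then produces a $(18, 4)$-representation of every $n > N + n^* = 77{,}913{,}954$, and adjoining three copies of $1^4$ promotes this to a $(21, 4)$-representation of every $n > 77{,}913{,}957$.

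What remains is to verify, for $21 \le n \le 77{,}913{,}957$, that $n$ is $(21, 4)$-representable if and only if $n - 21 \notin \mathbf{B}^4$. I would carry this out by running the sieve of Section \ref{Computation} through 21 layers on the full interval $[1, 77{,}913{,}957]$, each layer built from its predecessor via the rule that $n$ is $(j+1, 4)$-representable exactly when $n - a^4$ is $(j, 4)$-representable for some $a \ge 1$, and then compare the zeros of the final layer against the predicted exception set $\{1, 2, \ldots, 20\} \cup \{21 + b : b \in \mathbf{B}^4\}$.

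The main obstacle is purely computational: 21 sieve layers on a vector of length $\approx 8 \times 10^7$ is memory-heavy (about $2N$ bits per layer, per Section \ref{Computation}) but well within reach of the algorithm described there. Since $\max \mathbf{B}^4 = 2641$, every genuine exception lies in the small window $n \le 2662$, so the bulk of the work is in certifying that no additional exception surfaces anywhere in the long tail up to $77{,}913{,}957$, and that no element of Zenkin's explicit list for $\mathbf{B}^4$ is spurious. Once that tabulation agrees with the prediction, both the base case and, via the one-step induction above, the full theorem follow.
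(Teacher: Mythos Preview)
Your proposal is correct and follows the same template the paper lays out for $k=5$ (proof of Theorems~\ref{Theorem_2_fifths}--\ref{Theorem_3_fifths}) and attributes to Zenkin for $k=4$. The only cosmetic differences are that you feed $b=16$, $N=13792$ from Lemma~\ref{lemma_for_fourths} into Theorem~\ref{n*_application} and then pad by three copies of $1^4$, whereas Zenkin---lacking that lemma---took $b=g(4)=19$, $N=0$ and landed at $j=21$ directly with a marginally smaller sieve ceiling; and for the extension to all $j\ge 21$ you use the obvious one-step ``append $1^4$'' induction, while the paper packages the same step via Theorem~\ref{B_consistency}.
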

\noindent The set of positive integers that are not the sum of $j=19$ \textit{nonnegative} fourth powers was determined in 1992 by Deshouillers and Dress \cite{Deshouillers1992.2}, and in 2000, Deshouillers, Hennecart, \& Landreau \cite{Deshouillers2000} determined similar sets for $j=18, 17$ and $16$. 
\begin{theorem}\label{theorem_fourths}
    The sets $\overline{\mathbf{B}_{20}^4}$, $\overline{\mathbf{B}_{19}^4}$, and $\overline{\mathbf{B}_{18}^4}$ are finite and listed in the \nameref{Appendix}.
\end{theorem}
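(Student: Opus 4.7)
The plan is to apply Theorem \ref{n*_application} with parameters $k=4$, $d=2$, $n^*=77900162$, $b=16$, and $N=13792$ (supplied by Lemma \ref{lemma_for_fourths}), which handles $j=18$, and then bootstrap to $j=19$ and $j=20$ by appending copies of $1^4$.

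First I would verify that $n^*=77900162$ is $(j,4)$-representable for every $2 \le j \le 17$. This is exactly the content of the entry $d=2$ for $k=4$ in Table \ref{Table_Powers}: by construction $n^*$ admits $(j,4)$-representations for all $2 \le j \le g(4)+d=21$, and explicit representations can be produced by the algorithms of Section \ref{Computation} and tabulated in the Appendix for reproducibility. Feeding this into Theorem \ref{n*_application} gives that every positive integer $n > N+n^* = 77913954$ is $(18,4)$-representable, so $\mathbf{B}_{18}^4 \cap (18,\infty) \subseteq (18, 77913954]$ is finite, and since $\mathbf{B}^4$ is itself finite, $\overline{\mathbf{B}_{18}^4}$ is finite as claimed.

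For $j \in \{19, 20\}$ I would bootstrap on the conclusion for $j=18$: if $n > 77913955$, then $n - 1^4 > 77913954$ is $(18,4)$-representable, so appending $1^4$ yields a $(19,4)$-representation of $n$; one more iteration gives $(20,4)$-representability for every $n > 77913956$. Hence $\overline{\mathbf{B}_{19}^4}$ and $\overline{\mathbf{B}_{20}^4}$ are also finite. The key ingredient unavailable to Zenkin when he proved Theorem \ref{Theorem_2_fourths} is precisely Lemma \ref{lemma_for_fourths}, so once it is in hand the finiteness argument itself is immediate; the real obstacle is computational rather than theoretical, namely the explicit enumeration of the three sets in the Appendix, which requires sieving all $(j,4)$-representable integers up to roughly $7.8 \times 10^7$ for each $j \in \{18, 19, 20\}$ via the algorithm of Section \ref{Computation}, demanding on the order of $10^8$ bits of memory per sieve but being otherwise routine.
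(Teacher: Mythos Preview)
Your proposal is correct and follows essentially the same approach as the paper: invoke Theorem \ref{n*_application} with $n^*=77900162$, $d=2$, $b=16$, and the explicit bound $N=13792$ from Lemma \ref{lemma_for_fourths} (equivalently Lemma \ref{DHL}) to handle $j=18$, then push to $j=19,20$ by appending $1^4$ and sieve the remaining finite range. The paper phrases the last step as ``the proof technique can again be repeated'' after citing the Deshouillers--Hennecart--Landreau result, which is exactly what you have spelled out; you could alternatively have invoked Theorem \ref{subsets} to get $\overline{\mathbf{B}_{20}^4}\subseteq\overline{\mathbf{B}_{19}^4}\subseteq\overline{\mathbf{B}_{18}^4}$ directly, but the bootstrap is equivalent.
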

\begin{conjecture}
    The sets $\overline{\mathbf{B}_{17}^4}$ and $\overline{\mathbf{B}_{16}^4}$ are finite and listed in the \nameref{Appendix}.
\end{conjecture}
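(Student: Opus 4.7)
The plan is to establish finiteness of $\overline{\mathbf{B}_{16}^4}$ and $\overline{\mathbf{B}_{17}^4}$ by combining a sieve computation up to a large bound $N$ with a theoretical tail argument, paralleling the proof of Theorem \ref{theorem_fourths} for $j = 18, 19, 20$. The essential new difficulty is that the Zenkin mechanism of Theorem \ref{n*_application} is unavailable here: it would demand $b + d = j$ with $b \geq G(4) = 16$, forcing $d \leq 0$ when $j = 16$ (impossible since $n^* > 0$ requires $d \geq 1$) and $d \leq 1$ when $j = 17$, which is ruled out by Fermat's Last Theorem for exponent $4$ because no positive integer can be simultaneously a fourth power and a sum of two positive fourth powers.

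First I would run the sieve of Section \ref{Computation}, which stores at most $2N$ bits, to enumerate every $m \in (j, N]$ that fails to be $(j, 4)$-representable for $j = 16, 17$. Subtracting $j$ and deleting those $\beta$ lying in $\mathbf{B}^4$ yields a candidate finite list, which would be compared directly with the sets tabulated in the Appendix. To certify that nothing has been missed up to $N$, the second, representation-producing algorithm of Section \ref{Methods} can be used to exhibit an explicit $(j, 4)$-representation for each $m \in (j, N]$ that is not on the candidate list.

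To close the tail $n > N$, I would attempt a padded-Deshouillers argument. By Lemma \ref{lemma_for_fourths}, every $n > 13792$ is a sum of at most $16$ nonnegative fourth powers, equivalently a sum of exactly $j'(n)$ positive fourth powers for some $1 \leq j'(n) \leq 16$. I would then search (using the second algorithm of Section \ref{Methods}, in the spirit of the $n^*$ hunt in Section 3) for a small finite collection $\mathcal{S}$ of positive ``pads'' such that each $s \in \mathcal{S}$ admits $(\ell, 4)$-representations for a contiguous range of $\ell$, and such that for every $n > N$ one can choose $s \in \mathcal{S}$ with $n - s > 13792$ and with $16 - j'(n - s)$ (respectively $17 - j'(n - s)$) attainable as a representation length of $s$. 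Concatenating the two representations then produces the desired $(j, 4)$-representation of $n$. Theorem \ref{representation_existence} would be iterated on the sieve data to promote $(j', 4)$-representability of long intervals and to confirm that every relevant value of $j'(n)$ is indeed covered by some pad.

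The principal obstacle is exactly the obstruction that disables the Zenkin argument: no single $n^*$ can furnish representation lengths $0$ or $1$, and there is no a priori guarantee that a multi-pad collection $\mathcal{S}$ with the required spread of representation lengths both exists and interacts favourably with the positive-term counts $j'(n)$ coming out of the Deshouillers--Hennecart--Landreau decompositions. I therefore expect the bulk of the work to consist of an exhaustive computer search for $\mathcal{S}$ followed by a careful case analysis over the possible values of $j'(n) \pmod{|\mathcal{S}|}$; should either step fail, the statement must remain a conjecture certified only up to the sieve bound $N$, which is consistent with the paper labelling the assertion as a Conjecture rather than a Theorem.
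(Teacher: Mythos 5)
You have correctly identified the essential situation: the paper offers \emph{no proof} of this statement --- it is deliberately labelled a Conjecture, supported only by the sieve computation that produces the candidate lists in the \nameref{Appendix}, which is precisely the first half of your program. Your diagnosis of why no proof is available also matches the paper's own reasoning: Theorem \ref{n*_application} needs $b \geq G(4) = 16$ and $d \geq 2$ (the paper states in Section 3 that $d = 1$ is barred because, by Fermat's Last Theorem for exponent $4$, no integer is simultaneously a perfect fourth power and a sum of two positive fourth powers), so the Dubouis--Zenkin machinery bottoms out at $j = G(4) + d = 18$; this is exactly why Theorem \ref{theorem_fourths} stops at $\overline{\mathbf{B}_{18}^4}$ and why Section \ref{Improvements} concedes the technique only reaches $j \geq G(k) + d$.

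Your proposed ``padded-Deshouillers'' tail argument, however, has a concrete gap beyond the difficulties you flag, and it is worth naming because it shows the program cannot be completed even with unlimited computation. For $j = 16$, concatenation requires some pad $s \in \mathcal{S}$ for which $n - s$ admits a decomposition into \emph{at most 15} positive fourth powers, so that $s$ can contribute at least one term. But Lemma \ref{lemma_for_fourths} guarantees only ``at most $16$,'' and this cannot be sharpened uniformly: the classical lower-bound argument for $G(4) \geq 16$ shows that every integer of the form $31 \cdot 16^m$ requires all $16$ terms (each fourth power is $0$ or $1 \bmod 16$, and a descent on the factor $16$ reduces to the case of $31$), so the ``needs-all-16'' set is infinite. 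When $n - s$ lies in that set for every $s \in \mathcal{S}$, the pad must contribute zero terms, which is impossible. Dodging the known family $\{31 \cdot 16^m\}$ with two pads is easy heuristically, but a proof would require an unconditional characterization of \emph{all} integers that are not sums of at most $15$ positive fourth powers, and no such result exists --- obtaining one is essentially the same open problem. The same failure recurs for $j = 17$ one level up: a single pad covering complementary lengths $1$ and $2$ would need $a^4 = b^4 + c^4$, again barred by Fermat, and a multi-pad case analysis over $j'(n-s)$ cannot be certified because Lemma \ref{lemma_for_fourths} asserts only the existence of \emph{some} $j' \leq 16$, not control over which $j'$ occur. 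So your closing assessment is the correct one, and it is the paper's as well: the statement is certified only up to the sieve bound and must remain a conjecture.
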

\noindent It is known \cite{Kawada_Wooley} that the sets $\mathbf{B}_j^4$ for $j \leq 15$ are infinite. 

\section{Positive Fifth Powers}
The landscape for higher powers is much less known. In the case of fifths (and for all larger cases), the results of Waring's problem are currently established for \textit{sufficiently large} $n$. Numerical experimentation indicates the smallest such positive integer that is not the sum of exactly $j$ positive $k$\textsuperscript{th} powers, but this $n$ is usually conjectured because the proof technique laid out by Dubouis \cite{Dubouis} and that is used in this paper has its limitations - namely, the technique requires finding an integer that is the sum of $a, a+1, a+2, \ldots, g(k)+a$ perfect $k$\textsuperscript{th} powers, creating an upper bound, and then checking small values less than that bound. Finding such a number is usually possible (though large), and proving the novel theorems in this section for sufficiently large positive integers is still progress, but ultimately a calculation will need to be performed to remove any doubts of small-valued counterexamples.


What positive integers cannot be expressed as the sum of exactly $j$ positive fifth powers? Vaughan \cite{Vaughan} established the upper bound for nonnegative fifth powers $G(5) \leq 19$, which was then refined by Br\"udern \cite{Bruedern} to $G(5) \leq 18$. Vaughan \& Wooley \cite{Vaughan_Wooley_5ths} eventually reduced the upper bound to its current status, $G(5) \leq 17$.
\begin{lemma}[Vaughan \& Wooley]\label{lemma_for_fifths}
Every sufficiently large positive integer can be expressed as a sum of at most 17 positive fifths. 
\end{lemma}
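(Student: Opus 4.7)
The plan is to prove $G(5)\le 17$ via the Hardy-Littlewood circle method, following the smooth-number refinement introduced by Vaughan and sharpened iteratively by Wooley. Set $P=\lfloor N^{1/5}\rfloor$, let $f(\alpha)=\sum_{x=1}^{P} e(\alpha x^5)$, and observe that the number of representations of $N$ as a sum of $17$ positive fifth powers equals
\[
r_{17}(N)=\int_0^1 f(\alpha)^{17}\, e(-\alpha N)\, d\alpha.
\]
The goal is to show $r_{17}(N)>0$ for all sufficiently large $N$; in fact one expects $r_{17}(N)\asymp N^{17/5-1}$.

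First I would perform the standard Hardy-Littlewood dissection, defining major arcs $\mathfrak{M}$ as a union of narrow intervals around rationals $a/q$ with $\gcd(a,q)=1$ and $q$ at most a small power of $P$, with $\mathfrak{m}=[P^{-5},1+P^{-5}]\setminus\mathfrak{M}$ the complementary minor arcs. On $\mathfrak{M}$ the usual Poisson/Weyl-type approximations extract the expected main term $\mathfrak{S}(N)\mathfrak{J}(N)$, where $\mathfrak{J}(N)\asymp N^{17/5-1}$ and the singular series $\mathfrak{S}(N)$ satisfies $\mathfrak{S}(N)\gg 1$ by verifying local solubility in every $p$-adic completion; since $s=17$ comfortably exceeds each local obstruction for fifth powers (the worst prime being $p=11$, whose constraints are satisfied once $s$ is moderately large), this reduces to a routine but necessary $p$-adic computation.

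The heart of the argument, and the main obstacle, is the minor-arc bound $\int_{\mathfrak{m}}|f(\alpha)|^{17}\,d\alpha = o(P^{12})$. To obtain it I would replace $f$ by the smooth-variable auxiliary sum $f(\alpha;P,R)=\sum_{x\in\mathcal{A}(P,R)} e(\alpha x^5)$, where $\mathcal{A}(P,R)$ denotes the $R$-smooth integers in $[1,P]$, and then deploy Wooley's efficient differencing together with his iterative mean value machinery. The key analytic input is a near-diagonal estimate of the shape $\int_0^1|f(\alpha;P,R)|^{2t}\,d\alpha\ll P^{2t-5+\varepsilon}$ for $t$ close to $17/2$, fused with a pointwise Weyl-type bound $\sup_{\alpha\in\mathfrak{m}}|f(\alpha)|\ll P^{1-\sigma+\varepsilon}$ for some $\sigma>0$ drawn from minor-arc diophantine approximation.

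The delicate issue is extracting the precise exponent $17$ rather than $18$ or $19$: earlier circle-method work of Vaughan already achieved $G(5)\le 19$ and Br\"udern reached $G(5)\le 18$, and any weakening of the mean value estimate above immediately forces one of those weaker bounds. Thus the main obstacle is invoking the strongest available Wooley mean value theorem and tracking the combinatorial bookkeeping through the iteration carefully enough to close the gap, after which the major and minor arc contributions combine to give $r_{17}(N)\gg N^{17/5-1}$ for $N$ sufficiently large, completing the proof.
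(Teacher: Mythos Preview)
Your sketch is a reasonable outline of the Vaughan--Wooley circle-method argument, but note that the paper does not prove this lemma at all: it is quoted as a result from the literature with a citation to Vaughan and Wooley, and is used as a black box throughout. So there is no ``paper's proof'' to compare against; the authors simply import $G(5)\le 17$ and build on it. Your outline of the underlying analytic argument is broadly faithful to the actual source, though of course the serious content lies in the mean-value iteration you gesture at, which is a substantial paper in its own right rather than something to be reproduced here.
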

\noindent Note that Vaughan \& Wooley \cite{Vaughan_Wooley_5ths} do not calculate the least positive integer such that all larger positive integers can be expressed as the sum of at most 17 positive fifth powers.
\begin{conjecture}\label{Conjecture_n>87918 for fifths}
    Every positive integer greater than $87918$ is the sum of at most $17$ positive fifth powers.
\end{conjecture}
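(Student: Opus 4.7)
The plan is to split the problem into a finite computational verification and an asymptotic tail estimate. First, I would apply the sieve algorithm from Section \ref{Computation}: for each $j = 1, 2, \ldots, 17$, build and store the bitvector indicating which positive integers $n \le M$ admit a $(j, 5)$-representation, using the recurrence that $n$ is $(j+1, 5)$-representable if and only if $n - a^5$ is $(j, 5)$-representable for some $a \ge 1$. Bitwise OR of the seventeen vectors then marks exactly those $n \le M$ expressible as a sum of at most $17$ positive fifth powers. A single linear scan of the result should confirm that $87918$ is the largest failure below $M$, and simultaneously furnish a long unbroken block of verified representations just above it.

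Second, I would need to dispose of all $n > M$ by a gluing argument in the spirit of Theorem \ref{n*_application}. Applying that theorem with the candidate $n^* = 100000497376$ from Table \ref{Table_Powers}, $d = 3$, and $b = G(5) \le 17$ yields only $(20, 5)$-representability for $n$ larger than the (non-explicit) Vaughan--Wooley threshold, which is strictly weaker than what the conjecture demands. A naive alternative---subtracting a single fifth power $a^5$ to push $n$ into the already-verified window $(87918, M]$---fails whenever that window is narrower than the local gap $a^5 - (a-1)^5 \sim 5 a^4$, and even when it succeeds it costs an extra summand, so would require every $n$ in the window to be representable by \emph{at most} $16$ rather than $17$ positive fifth powers, which is almost certainly false.

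The main obstacle, then, is an effective version of $G(5) \le 17$: an explicit $N^*$ such that every $n > N^*$ is a sum of at most $17$ positive fifth powers. Given such an $N^*$, one simply chooses $M \ge N^*$ and the two halves of the argument glue together. Extracting an effective $N^*$ directly from the circle-method proof of Vaughan \& Wooley is in principle possible, but tracking the constants through the minor-arc estimates yields bounds vastly larger than any feasible sieve computation. A realistic pathway would mirror the treatment of fourth powers by Deshouillers, Hennecart, Kawada, Landreau, and Wooley \cite{Kawada_Wooley,Deshouillers2000,Deshouillers_Kawale_Wooley}: combine refined analytic estimates with large-scale enumeration of admissible near-diagonal contributions, progressively shrinking $N^*$ until it meets the computational ceiling $M$. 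Until such a hybrid analytic--computational bound is developed in the fifth-power setting, the statement must remain a conjecture supported by the direct verification of the initial range.
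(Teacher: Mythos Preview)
Your analysis is correct, and it aligns with the paper's own treatment: the statement is labeled a \emph{conjecture} precisely because the paper does not prove it. The paper explicitly notes that Vaughan \& Wooley's bound $G(5)\le 17$ is established only for sufficiently large integers without an explicit threshold, and throughout the fifth-power section the results are stated conditionally on this conjecture or proved only for $j\ge 40$ via the unconditional $g(5)=37$. Your identification of the obstacle---the lack of an effective $N^*$ from the circle-method argument---is exactly why the paper leaves the statement open, and your proposed pathway (a Deshouillers--Hennecart--Landreau style hybrid of refined analytic bounds and large-scale computation) is the natural route forward, though the paper does not pursue it.
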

Define the set $\mathbf{B}^5$ by
\[
\mathbf{B}^5 = \{1, 2, 3, 4, 5, 6, 7, \ldots, , 6137, 6168, 6199, 6230, 6261\}.
\]
The full set $\mathbf{B}^5$ has $3175$ elements and is listed in the \nameref{Appendix}. Because Vaughan \& Wooley's result \cite{Vaughan_Wooley_5ths} is established for sufficiently large $n$ rather than for all $n > 87918$, the proofs for fifths (and subsequently for all larger powers) is slightly different.

\begin{theorem}\label{Theorem_2_fifths}
Every (sufficiently large) positive integer is the sum of $j \ge 57$ positive fifth powers provided $n > j - 1$ and $n \neq j + b$ for $b \in \mathbf{B}^5$.
\end{theorem}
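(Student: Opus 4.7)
The plan is to combine the asymptotic input from Lemma \ref{lemma_for_fifths} with the representation-transfer machinery of Theorem \ref{n*_application} to establish the base case $j = 20$, then climb from $j = 20$ up to arbitrary $j \geq 57$ using the elementary ``append a $1^5$'' move.

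First, I would verify, via the direct representation-search algorithm of Section \ref{Methods}, that the candidate $n^* = 100000497376$ recorded in Table \ref{Table_Powers} admits a $(j, 5)$-representation for every $j \in \{3, 4, \ldots, 19\}$. With this in hand, Theorem \ref{n*_application} applied with $k = 5$, $b = 17$ (from Lemma \ref{lemma_for_fifths}), and $d = 3$ yields that every sufficiently large positive integer $n$ is $(20, 5)$-representable.

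Second, a straightforward induction on $j$ propagates the result upward. Assuming that every $n > M_j$ is $(j, 5)$-representable, for any $n > M_j + 1$ the integer $n - 1$ is $(j, 5)$-representable by hypothesis, and appending $1^5$ gives a $(j + 1, 5)$-representation of $n$. Iterating this move $37$ times yields $(57, 5)$-representability for every $n > M_{20} + 37$, and continuing further gives the corresponding conclusion for each $j \geq 57$.

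Third, the $\mathbf{B}^5$ exception clause in the statement is essentially vacuous for ``sufficiently large'' $n$: since $\mathbf{B}^5$ is finite with $\max \mathbf{B}^5 = 6261$, whenever $n > j + 6261$ one has $n - j \notin \mathbf{B}^5$ automatically. Thus once $n$ exceeds $\max(M_j, j + 6261)$, the hypothesis $n \neq j + b$ for $b \in \mathbf{B}^5$ holds for free, and the induction above produces the desired representation.

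The main obstacle is the initial verification of $n^*$'s many $(j, 5)$-representations: the direct search algorithm must be run on a number of order $10^{11}$ for seventeen distinct values of $j$, and only the memory-assisted pruning strategy described in Section \ref{Methods} makes this computationally feasible. A secondary concern is tracking the thresholds $M_j$ along the induction so that the $\mathbf{B}^5$ exceptions never pose a genuine constraint, but this is pure bookkeeping once the base case is settled.
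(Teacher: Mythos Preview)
Your argument is technically valid for the literal ``sufficiently large'' reading of the statement, but it proves too little: the same three steps show that every sufficiently large $n$ is $(j,5)$-representable for any fixed $j \ge 20$, so neither the threshold $57$ nor the specific exception set $\mathbf{B}^5$ plays any role. You acknowledge this yourself in the third step, where you declare the $\mathbf{B}^5$ clause vacuous---and that is precisely the signal that the intended content of the theorem has been lost.

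What the paper actually proves is that $g(1,5)=57$: for $j \ge 57$ the exception set is \emph{exactly} $\{1,\ldots,j-1\} \cup \{j+b : b \in \mathbf{B}^5\}$, with $\overline{\mathbf{B}_j^5}=\emptyset$. To get there the paper (i) replaces the asymptotic $G(5)\le 17$ by the unconditional $g(5)=37$ in Theorem~\ref{n*_application}, which requires verifying that $n^*=100000497376$ is $(j,5)$-representable for all $3 \le j \le 39$ and yields an explicit cutoff at $n^*$; (ii) bridges the range below $n^*$ by a direct check of $(10,5)$-representability on $(77529941,10^9)$ followed by two applications of Theorem~\ref{representation_existence}; (iii) computes the finite sets $\mathbf{B}_j^5$ from $j=40$ upward via the ``append $1^5$'' induction you describe; and (iv) invokes Theorem~\ref{B_consistency} with $m=6318$, $k=5$, $j=57$ to certify that the shifted exception sets have stabilised, so that $\mathbf{B}^5$ as listed is complete and $\overline{\mathbf{B}_j^5}=\emptyset$ for all $j\ge 57$. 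Steps (ii)--(iv) are absent from your proposal, and without them there is no reason the number $57$ should appear in the statement at all.
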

\begin{theorem}\label{Theorem_3_fifths}
Every (sufficiently large) positive integer is the sum of $j$ positive fifth powers for $20 \leq j \leq 56$, provided $n > j - 1$ and $n \neq j + b$ for $b \in \mathbf{B}^5 \cup \overline{\mathbf{B}_j^5}$ where $\overline{\mathbf{B}_j^5}$ is a finite set given in the \nameref{Appendix}.
\end{theorem}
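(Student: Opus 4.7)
The plan is to chain Theorem \ref{n*_application} with Theorem \ref{representation_existence} to propagate $(j,5)$-representability of sufficiently large $n$ through the range $20 \le j \le 56$, and then to extract the finite exception lists $\overline{\mathbf{B}_j^5}$ via the sieve of Section \ref{Computation}. The scheme parallels the proof of Theorem \ref{Theorem_2_fifths}, but at each level $j < g(1,5)=57$ an additional finite family of leftover obstructions must be located and shown to account for everything outside $\mathbf{B}^5$.

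First I would establish the base case $j = 20$. With $b = 17$ supplied by the Vaughan--Wooley bound (Lemma \ref{lemma_for_fifths}), $d = 3$, and $n^* = 100000497376$ exhibited as a sum of exactly $i$ positive fifth powers for each $3 \le i \le 19$ (the representations are produced by the $n^*$-search procedure of Section \ref{Methods}), Theorem \ref{n*_application} immediately yields $(20,5)$-representability for every sufficiently large $n$. I would then move upwards one step at a time: given that all $n$ in some tail are $(j,5)$-representable, Theorem \ref{representation_existence} supplies the same conclusion at level $j+1$ on a slightly shifted tail, and iterating this 36 times carries the representability conclusion up to $j = 56$. The gap condition $a^5-(a-1)^5 < N-n$ is effortless at this stage because each starting interval extends to infinity on the right.

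The exception sets themselves are produced by the sieve algorithm of Section \ref{Computation}. For each $j \in [20, 56]$ I would run the sieve up to a working bound $M$ chosen to exceed $j + \max \mathbf{B}^5$ plus a comfortable safety margin, record every $n > j-1$ that the sieve marks as non-representable, and subtract $j$ to obtain the candidate exceptions. These split cleanly into the persistent block $\mathbf{B}^5$ (identical to the list appearing in Theorem \ref{Theorem_2_fifths}) and the residual set $\overline{\mathbf{B}_j^5}$ tabulated in the appendix; the finiteness of $\overline{\mathbf{B}_j^5}$ is guaranteed by the asymptotic step, which forbids further exceptions outside the sieve's window.

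The main obstacle will be closing the window between the sieve's practical reach $M$ and the asymptotic threshold $n > N + n^*$, of order $10^{11}$, where Theorem \ref{n*_application} takes over. Memory limits prevent the sieve from marching directly up to $10^{11}$, so the bridge must be built by applying Theorem \ref{representation_existence} repeatedly to widen the sieve-verified interval of $(j,5)$-representability. Because $a^k-(a-1)^k < N-n$ lets $a$ grow only polynomially in the interval length, careful bookkeeping of how the verified interval expands at each iteration, and at each level $j$, is essential. Executing the sieve, performing the bridge, and sorting the resulting obstructions into $\mathbf{B}^5$ and $\overline{\mathbf{B}_j^5}$ for all 37 values of $j$ is the dominant cost of the proof.
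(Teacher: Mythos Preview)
Your proposal is correct and follows essentially the same approach as the paper: Theorem~\ref{n*_application} with $n^*=100000497376$, $d=3$, and the Vaughan--Wooley bound gives the asymptotic at $j=20$; adding $1^5$ (your invocation of Theorem~\ref{representation_existence} here is valid but more than needed) propagates to $j=21,\ldots,56$; and the sieve plus Theorem~\ref{representation_existence} bridges the gap between the computed range and the asymptotic threshold near $10^{11}$. The one operational difference is that the paper runs the sieve once at $j=10$ (finding all $77529941<m<10^9$ are $(10,5)$-representable), applies Theorem~\ref{representation_existence} twice to push the upper end past $10^{12}$ at $j=12$, and then adds eight copies of $1^5$ to reach $j=20$---rather than sieving and bridging separately at each level $j\in[20,56]$ as you outline; this is purely a computational economy and does not affect correctness.
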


The sets $\overline{\mathbf{B}_j^5}$ for $6\leq j \leq19$ are also conjectured to be finite, but are too large to be computed completely, and are not included in the \nameref{Appendix} at this time. The sets $\overline{\mathbf{B}_j^5}$ for $1\leq j \leq5$ are known to be infinite.

\section{Positive Sixth Powers}
Vaughan \& Wooley \cite{Vaughan_Wooley_5ths} gave the upper bound for $G(6)$. Again in the case of sixth powers (and for all larger cases), the results of Waring's problem are established for \textit{sufficiently large} $n$. Numerical experimentation indicates the smallest such number.
\begin{lemma}[Vaughan \& Wooley]\label{lemma_for_sixths}
Every sufficiently large positive integer can be expressed as a sum of at most 24 positive sixth powers. 
\end{lemma}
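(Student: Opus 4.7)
The plan is to prove this via the Hardy--Littlewood circle method, following the refinement of the Vaughan--Wooley efficient differencing framework that is standard for upper bounds on $G(k)$. Set $P = N^{1/6}$ and introduce the generating function
\[
f(\alpha) = \sum_{1 \le x \le P} e(\alpha x^6),
\]
so that the number of representations of $N$ as a sum of $s=24$ positive sixth powers is
\[
R_s(N) = \int_0^1 f(\alpha)^s e(-\alpha N)\, d\alpha.
\]
Showing $R_{24}(N) > 0$ for all sufficiently large $N$ is equivalent to the lemma. I would partition $[0,1]$ (using a Farey dissection) into major arcs $\mathfrak{M}$ centered at rationals $a/q$ with $q$ small relative to a power of $P$, and minor arcs $\mathfrak{m} = [0,1] \setminus \mathfrak{M}$.

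First I would handle the major arcs in the usual way: approximate $f(\alpha)$ by $q^{-1}S(q,a)v(\beta)$ where $S(q,a)$ is the complete exponential sum and $v(\beta) = \int_0^P e(\beta t^6)\,dt$, then show
\[
\int_{\mathfrak{M}} f(\alpha)^{24} e(-\alpha N)\, d\alpha \;\sim\; \mathfrak{S}(N) \, J(N)\, P^{24-6},
\]
with the singular series $\mathfrak{S}(N)$ bounded away from zero (this requires checking local solubility modulo every prime power, and for $k=6$ the classical Hensel-lifting arguments give $\mathfrak{S}(N) \gg 1$ once $s$ is sufficiently large, certainly $s = 24$ suffices). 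The singular integral $J(N) \asymp N^{s/k - 1}$ follows from a standard change of variables.

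The real work is the minor arc bound: one must show $\int_{\mathfrak{m}} |f(\alpha)|^{24}\, d\alpha = o(P^{18})$. A naive application of Weyl's inequality gives only $|f(\alpha)| \ll P^{1 - 2^{-5}+\varepsilon}$ on $\mathfrak{m}$, which yields $|f|^{24} \ll P^{24 - 3/4 + \varepsilon}$ and is too weak. The plan is instead to use a Vaughan-style mean value estimate together with Wooley's efficient differencing: bound $\int_0^1 |f(\alpha)|^{2t}\,d\alpha$ for a suitable even moment $2t < 24$ by iterated differencing of the polynomial $x^6$, and then interpolate against a pointwise minor arc bound for $|f(\alpha)|^{24-2t}$. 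For sixth powers this is exactly the kind of estimate Vaughan \& Wooley develop, tuned so that the exponents balance at $s = 24$.

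The main obstacle, unsurprisingly, is the minor arc estimate --- obtaining the correct admissible exponent for the $2t$-th moment of $f$ is where all of the depth of the cited work lies, and where simply plugging in Weyl fails. The major arc analysis and the verification $\mathfrak{S}(N) \gg 1$ are technical but follow well-worn templates; once both pieces are in place, combining them yields $R_{24}(N) \gg P^{18}$ for $N$ sufficiently large, which is the lemma. I would conclude by noting that this argument, unlike the computational ones earlier in the paper, is inherently asymptotic, which is why the bound holds only for sufficiently large $N$ and motivates the numerical work in the subsequent section to pin down an explicit threshold.
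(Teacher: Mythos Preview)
The paper does not prove this lemma at all: it is quoted verbatim as a result of Vaughan \& Wooley (the cited reference \cite{Vaughan_Wooley_5ths}) and used as a black box, exactly as Lemmas~\ref{lemma_for_fourths}, \ref{lemma_for_fifths}, \ref{lemma_for_sevenths}, \ref{lemma_for_eighths}, and \ref{lemma_for_ninths} are. So there is no in-paper argument to compare your proposal against.

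Your sketch is a fair high-level outline of the circle method strategy that does underlie the original Vaughan--Wooley proof of $G(6)\le 24$, and you correctly identify that the entire difficulty sits in the minor arc mean-value estimate obtained via efficient differencing; the major arc and singular series parts are indeed routine at $s=24$. But as written it is only a plan, not a proof: the phrase ``tuned so that the exponents balance at $s=24$'' hides precisely the substantive content, namely the specific iterated differencing scheme and the resulting admissible exponent bounds for $\int_0^1 |f(\alpha)|^{2t}\,d\alpha$ that make $24$ (rather than some larger $s$) work. For the purposes of this paper, though, none of that is needed---the authors simply invoke the lemma and move on to the computational work of locating $n^*$ and the sets $\overline{\mathbf{B}_j^6}$.
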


\begin{conjecture}\label{smallest n conj for 6ths}
    Every positive integer greater than $1414564$ is the sum of at most $24$ positive sixth powers.
\end{conjecture}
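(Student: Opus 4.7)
The proof plan is to combine an explicit effective form of Lemma \ref{lemma_for_sixths} with a direct computational check of a finite initial range, and then to close the gap between the two via a sparse bootstrap. The argument would split into three logical steps.

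First, I would verify that $1414564$ itself is \emph{not} the sum of at most $24$ positive sixth powers. The largest sixth power not exceeding $1414564$ is $10^6$, so an exhaustive backtracking search of the kind described in Section \ref{Methods}, written in decreasing-base order and pruned against the tail bound $j \cdot a_{\min}^6$, exhausts all candidate representations quickly. This is the easy direction and produces only a lower bound on any potential threshold.

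Second, and this is the principal obstacle, I would produce an \emph{explicit} constant $M$ such that every integer exceeding $M$ is a sum of at most $24$ positive sixth powers. Vaughan \& Wooley's proof of $G(6) \le 24$ uses the Hardy--Littlewood circle method, and making it effective requires inserting explicit numerical constants into the major arc main term, the singular series lower bound, the singular integral estimate, and the minor arc exponential-sum bound, drawing on the quantitative machinery of Br\"udern \& Wooley \cite{BruedernWooley}. The resulting $M$ is almost certainly astronomical, well beyond direct integer-by-integer verification, so the task is not merely to produce \emph{some} effective $M$ but one small enough that step three is feasible.

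Finally, the gap between $1414564$ and $M$ would be closed computationally. The sieve of Section \ref{Computation} handles an initial window up to some attainable ceiling $N_0$, and its output would be tested against the claim on the interval $(1414564, N_0]$. To push beyond $N_0$, I would iterate a bootstrap in the spirit of Theorem \ref{representation_existence}: once a sufficiently long window of consecutive $(23,6)$-representable integers is stored, each $n$ slightly larger than the window can be written $n = a^6 + m$ with $m$ inside the window, yielding a $(24,6)$-representation; the required window width is governed by the gaps $a^6 - (a-1)^6 \sim 6a^5$, so only sparse checks are needed far out. The hard part remains step two: without a tractable $M$ the conjecture rests on the numerical evidence behind Table \ref{Table_Powers}, and the real mathematical content is in trimming Vaughan \& Wooley's bound to a threshold that the bootstrap can actually reach.
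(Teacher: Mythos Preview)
The statement under discussion is labeled a \emph{Conjecture} in the paper, and the paper offers no proof of it; the number $1414564$ is presented purely as the result of numerical experimentation, with Lemma~\ref{lemma_for_sixths} (Vaughan--Wooley, $G(6)\le 24$) left in its ineffective ``sufficiently large'' form. There is therefore no proof in the paper to compare your proposal against.

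Your outline is an accurate diagnosis of what a proof would require, and you have correctly located the genuine gap yourself: step two, producing an explicit $M$ from the Vaughan--Wooley circle-method argument that is small enough for the sieve-and-bootstrap of Sections~\ref{Methods} to reach. Everything else in your plan---the backtracking verification that $1414564$ has no representation, the initial sieve over $(1414564,N_0]$, and the Theorem~\ref{representation_existence}-style extension by subtracting $a^6$ with gap $\sim 6a^5$---is routine and matches the paper's methodology for the cases it \emph{does} settle. But no effective constant of the needed size has been extracted from the circle-method literature for sixth powers, and your proposal does not supply one; you acknowledge as much in the final sentence. So the proposal is not a proof but a correct statement of the obstruction, which is exactly why the paper records the claim as a conjecture rather than a theorem.
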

Define the set $\mathbf{B}^6$ by
\[
\mathbf{B}^6 = \{1, 2, 3, 4, 5, 6, 7, \ldots, 711586, 711649\}.
\]
The full set $\mathbf{B}^6$ has $355825$ elements and is listed in the \nameref{Appendix}.

\begin{theorem}\label{Theorem_2_sixths}
Every positive integer $n$ is the sum of $j \ge 79$ positive sixth powers provided $n > j - 1$ and $n \neq j + b$ for $b \in \mathbf{B}^6$.
\end{theorem}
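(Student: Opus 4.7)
The plan is to follow the template of Theorems \ref{theorem_3_cubes} and \ref{Theorem_2_fifths}: use Theorem \ref{n*_application} to reduce the claim to a finite computation, run that computation with the sieve of Section \ref{Computation}, and then propagate upward in $j$ by appending $1^6$. Two things need to be verified: (i) each $n = j + \beta$ with $\beta \in \mathbf{B}^6$ is genuinely \emph{not} $(j, 6)$-representable, and (ii) every other $n > j - 1$ \emph{is} $(j, 6)$-representable.

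Part (i) is immediate from the construction of $\mathbf{B}^6$. A hypothetical $(j, 6)$-representation $j + \beta = \sum_{i=1}^{j} a_i^6$ with each $a_i \ge 1$ would, after discarding the indices where $a_i = 1$, produce $\beta = \sum_{i=1}^{m} (b_i^6 - 1)$ with $b_i \ge 2$ and $m \le j$; by construction no $\beta \in \mathbf{B}^6$ admits such a decomposition.

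For part (ii), I would induct on $j$ with base case $j = 79$. Applying Theorem \ref{n*_application} with $b = 24$ (from Lemma \ref{lemma_for_sixths}), $d = 5$, $n^* = 41253168892$, and $N = 1414564$ (from Conjecture \ref{smallest n conj for 6ths}, or the analogous explicit bound that makes Lemma \ref{lemma_for_sixths} effective) shows every $n > N + n^*$ is $(29, 6)$-representable. Fifty iterations of Theorem \ref{representation_existence} with $a = 1$ (i.e., padding by $1^6$) then propagate this to: every $n > N + n^* + 50$ is $(79, 6)$-representable. Since $\max \mathbf{B}^6 = 711649 \ll N + n^*$, no $\mathbf{B}^6$-exception can lie in this tail. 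The remaining finite window $79 \le n \le N + n^* + 50$ is handled by the bit-vector sieve of Section \ref{Computation}, iteratively updated from $j = 1$ up to $j = 79$ and compared against the tabulated $\mathbf{B}^6$. The inductive step is short: if the claim holds at $j$ and $n' > j$ has $n' - (j+1) \notin \mathbf{B}^6$, setting $n := n' - 1$ yields $n > j - 1$ and $n - j = n' - (j+1) \notin \mathbf{B}^6$, so $n$ has a $(j, 6)$-representation by hypothesis, and appending $1^6$ gives the required $(j+1, 6)$-representation of $n'$.

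The serious obstacle is the sieve at the base case: the indicator bit-vector has length of order $4.13 \times 10^{10}$ and must be updated through all $79$ levels of the recurrence, requiring several gigabytes of RAM in the packed implementation of Section \ref{Computation}. Modulo this heavy but tractable computation---made feasible by the improved techniques of Section \ref{Methods} and the precomputed value of $n^*$---the remainder of the argument is mechanical.
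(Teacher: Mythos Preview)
Your overall scheme—apply Theorem~\ref{n*_application}, sieve the residual window, then push upward in $j$ by appending $1^6$—is the paper's template, and Part~(i) together with your inductive step is correct. The gap is in how you instantiate Theorem~\ref{n*_application}. You take $b=24$ from Lemma~\ref{lemma_for_sixths} and $N=1414564$ ``from Conjecture~\ref{smallest n conj for 6ths}, or the analogous explicit bound that makes Lemma~\ref{lemma_for_sixths} effective.'' But no such effective bound is available: the Vaughan--Wooley result $G(6)\le 24$ is ineffective, and the value $1414564$ is explicitly labeled a conjecture here. With $N$ unknown, your ``finite window'' $79\le n\le N+n^*+50$ cannot actually be sieved, so as written the argument delivers only a conditional (or ``sufficiently large $n$'') statement, not the unconditional Theorem~\ref{Theorem_2_sixths}.

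The paper's fix—and the one you should adopt—is to take $b=g(6)=73$ rather than $b=24$. Since every positive integer is unconditionally a sum of at most $73$ positive sixth powers, one may set $N=0$ in Theorem~\ref{n*_application}; the same $n^*=41253168892$ (which is $(j,6)$-representable for $5\le j\le 77$, i.e.\ for $d\le j<b+d$) then gives that every $n>n^*$ is $(78,6)$-representable, and a single pad by $1^6$ reaches $j=79$. Two further differences are worth noting. First, the paper avoids holding a length-$4\times 10^{10}$ bit-vector by verifying representability on a much shorter initial interval and then stretching it past $n^*$ via Theorem~\ref{representation_existence} with large $a$, exactly as in the proof of Theorems~\ref{Theorem_2_fifths} and~\ref{Theorem_3_fifths}. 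Second, instead of an open-ended induction on $j$, the paper checks the two hypotheses of Theorem~\ref{B_consistency} at $j=78$ (namely $\mathbf{B}^6_{79}=\{1\}\cup\{n+1:n\in\mathbf{B}^6_{78}\}$ and the floor condition $\lfloor\sqrt[6]{711649(1+\tfrac{1}{63})}\rfloor=\lfloor\sqrt[6]{711727}\rfloor$), which simultaneously identifies $\mathbf{B}^6$ and certifies $\overline{\mathbf{B}^6_l}=\emptyset$ for all larger $l$ in one stroke.
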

\begin{conjecture}\label{Conjecture_sixths}
Every positive integer $n$ is the sum of $j = 78$ positive sixth powers provided $n > j - 1$ and $n \neq j + b$ for $b \in \mathbf{B}^6$.
\end{conjecture}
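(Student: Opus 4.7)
The plan is to reduce the conjecture to the assertion that $\overline{\mathbf{B}_{78}^6} = \emptyset$ and tackle this directly with the $n^*$-method of Theorem \ref{n*_application}. Unlike the situation for $j \ge 80$, which reduces immediately to the $j = 79$ case of Theorem \ref{Theorem_2_sixths} by appending $1^6$ to any $(79,6)$-representation, the case $j = 78 = g(1,6)$ is the boundary index where an independent argument is required and cannot be bootstrapped downward from the already-proved theorem. Hence, one must attack $j = 78$ from scratch.

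The first step is to produce a witness $n^*_{78}$ that is $(j, 6)$-representable for every $j$ in a window of $24$ consecutive integers ending at $77$. Starting from the tabulated $n^* = 41253168892$, which admits a $(j, 6)$-representation for each $j \in [5, 29]$, append $49$ copies of $1^6 = 1$ to each such representation to obtain
\[
n^*_{78} \;=\; n^* + 49 \;=\; 41253168941,
\]
which carries $(j, 6)$-representations for every $j \in [54, 78]$, in particular covering the required range $[54, 77]$. An application of Theorem \ref{n*_application} with $d = 54$ and $b = 24$ (the latter furnished by Lemma \ref{lemma_for_sixths}) then shows that every sufficiently large integer is $(78, 6)$-representable. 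Using the explicit threshold $N = 1414564$ of Conjecture \ref{smallest n conj for 6ths}, every $n > n^*_{78} + N$, i.e. every $n$ above roughly $4.13 \times 10^{10}$, is $(78, 6)$-representable.

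What remains is a direct computational verification: for every integer $n$ in the range $78 \le n \le n^*_{78} + N$ with $n - 78 \notin \mathbf{B}^6$, exhibit a $(78, 6)$-representation. The natural tool is the sieve algorithm of Section \ref{Computation}: build the bit-vector of $(j, 6)$-representable integers for $j = 1, 2, \ldots, 78$ iteratively, each step obtained from its predecessor by an OR-pass over the shifts by $a^6$, and read off $\overline{\mathbf{B}_{78}^6}$ at the final step. If the sieve reports no elements outside $\{78 + \beta : \beta \in \mathbf{B}^6\}$, the conjecture is resolved.

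The main obstacle is the sheer scale of this sieve. A bit-array of length on the order of $4 \times 10^{10}$ demands several gigabytes of memory, and the $78$ incremental passes each loop over thousands of sixth powers; worse, the conjectural character of the bound $N$ in Conjecture \ref{smallest n conj for 6ths} means that even the size of the sieve window is contingent. It is almost certainly these computational constraints, rather than any gap in the theoretical argument, that keep the statement a conjecture. Any sharpening of Conjecture \ref{smallest n conj for 6ths} that shrinks $N$, a structural argument ruling out large elements of $\overline{\mathbf{B}_{78}^6}$ without full enumeration, or a carefully distributed sieve implementation should suffice to promote Conjecture \ref{Conjecture_sixths} to a theorem.
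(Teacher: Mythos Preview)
Your plan is structurally sound but rests on a misreading of Table~\ref{Table_Powers}. The column headed $G(k)+d$ records only the output of Theorem~\ref{n*_application} when one takes $b = G(k)$; it does \emph{not} say that $n^*$ admits $(j,6)$-representations only for $j \in [5,29]$. In the proof section the paper records that $n^* = 41253168892$ is a sum of $5, 6, \ldots, 77$ positive sixth powers --- a window of length exactly $g(6) = 73$. Feeding this directly into Theorem~\ref{n*_application} with $d = 5$ and $b = g(6) = 73$ (so $N = 0$) already yields \emph{unconditionally} that every integer $n > n^*$ is $(78,6)$-representable. Your detour through $n^* + 49$ and $b = 24$ is therefore unnecessary and, worse, imports a dependence on Conjecture~\ref{smallest n conj for 6ths} that the paper's own machinery does not need.

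Since the statement is labeled a conjecture, the paper offers no complete proof to compare against; but the unconditional large-$n$ bound above is implicit in the proof of Theorem~\ref{Theorem_2_sixths}, and this is the route the paper takes for the neighbouring indices. What remains --- and what you correctly isolate in your final paragraph --- is the finite verification that every $n \le n^* \approx 4.1 \times 10^{10}$ with $n - 78 \notin \mathbf{B}^6$ is $(78,6)$-representable. That sieve (or a bridging argument via Theorem~\ref{representation_existence} that reaches past $n^*$ without overshooting $j = 78$) is the sole missing ingredient, and it is the same obstacle whether one arrives at the large-$n$ threshold via $g(6)$ or via $G(6)$.
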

\begin{conjecture}\label{Theorem_3_sixths}
Every positive integer $n$ is the sum of $j$ positive sixth powers for $8 \leq j \leq 78$, provided $n > j - 1$ and $n \neq j + b$ for $b \in \mathbf{B}^6 \cup \overline{\mathbf{B}_j^6}$ where $\overline{\mathbf{B}_j^6}$ is a finite set, conjectured in the \nameref{Appendix}\footnote{The sets $\overline{\mathbf{B}_j^6}$ for $9\leq j \leq18$ are also conjectured to be finite, but are too large to be computed completely, and are not included in the \nameref{Appendix}.}
\end{conjecture}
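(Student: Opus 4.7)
My strategy is to isolate, for each $j$ with $8 \le j \le 78$, a numerical bound $N_j$ beyond which every positive integer is $(j,6)$-representable, and then to apply the sieve of Section \ref{Computation} to $[1, N_j]$ to enumerate the finite exceptional set, grouping the output into $j + \mathbf{B}^6$ and $j + \overline{\mathbf{B}_j^6}$ via the decomposition recorded in the introduction.

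For the sub-range $29 \le j \le 78$ I would first invoke Theorem \ref{n*_application} with $k = 6$, $d = 5$, $b = G(6) \le 24$, and $n^* = 41253168892$; the hypothesis that $n^*$ is $(j',6)$-representable for every $5 \le j' \le 28$ is exactly what is certified in the $G(k)+d$ column of Table \ref{Table_Powers}, and combining this with the explicit Vaughan--Wooley cutoff supplied by Conjecture \ref{smallest n conj for 6ths} produces $N_{29} = 1414564 + n^*$. The extension to $29 < j \le 78$ is then immediate: given $n > N_{29} + (j - 29)$, any $(29,6)$-representation of $n - (j - 29)$ becomes a $(j,6)$-representation of $n$ upon appending $j - 29$ copies of $1^6$, so $N_j := N_{29} + (j - 29)$ suffices, and a single pass of the sieve reads off $\overline{\mathbf{B}_j^6}$ exactly.

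The principal obstacle is the complementary sub-range $8 \le j \le 28$, in which $j - d < G(6)$ and Theorem \ref{n*_application} is no longer applicable in its stated form. Because Theorem \ref{representation_existence} propagates consecutive-representability windows only in the direction of increasing $j$, there is no downward-induction mechanism available; in the absence of a new theoretical tool one is forced to rely on exhaustive sieving, namely running the algorithm of Section \ref{Computation} out to the largest cutoff $M$ that memory permits (comparable in magnitude to $n^*$), recording every $n \le M$ that fails to be $(j,6)$-representable, and conjecturing that no further exceptions arise for $n > M$. This is precisely where Conjecture \ref{Theorem_3_sixths} becomes genuinely conjectural rather than provable, and the footnote's omission of $9 \le j \le 18$ from the Appendix reflects only the sheer volume of the sieve output in that range rather than any structural obstruction to the method itself.
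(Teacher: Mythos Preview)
The statement is labeled a \emph{conjecture} in the paper, and the paper offers no proof of it. Your proposal is therefore not a proof either but rather a correct diagnosis of the obstruction, and it faithfully reproduces the paper's methodology as worked out in full for the analogous fifth-power case (the proof of Theorems \ref{Theorem_2_fifths} and \ref{Theorem_3_fifths}): Theorem \ref{n*_application} with $b = G(6) \le 24$, $d = 5$, and the explicit cutoff from Conjecture \ref{smallest n conj for 6ths} would settle $29 \le j \le 78$ conditionally, padding with copies of $1^6$ propagates the bound upward in $j$, and for $8 \le j \le 28$ one has only the sieve of Section \ref{Computation} with no certified stopping point. That split, and the recognition that the $j \le 28$ range is the genuinely open part, matches the paper's implicit position.

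One small addendum: the paper records (in the proof of Theorem \ref{Theorem_2_sixths}) that $n^* = 41253168892$ is in fact $(j',6)$-representable for all $5 \le j' \le 77$, not merely up to $28$. Taking $b = g(6) = 73$ rather than $b = G(6)$ therefore yields an \emph{unconditional} instance of Theorem \ref{n*_application} at $j = 78$ with $N = 0$; this is how the adjacent Theorem \ref{Theorem_2_sixths} is actually proved. It does not, however, help for $29 \le j \le 77$, so your placement of the conditional boundary at $j = 29$ is the correct one for the conjecture as stated.
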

The sets $\overline{\mathbf{B}_j^6}$ for $1\leq j \leq8$ are known to be infinite.


\section{Positive Seventh Powers}
What positive integers cannot be expressed as the sum of exactly $j$ positive seventh powers? Vaughan \& Wooley \cite{Vaughan_Wooley_5ths} gave the current upper bound for $G(7)$.
\begin{lemma}[Vaughan \& Wooley]\label{lemma_for_sevenths}
Every sufficiently large positive integer can be expressed as a sum of at most $33$ positive seventh powers. 
\end{lemma}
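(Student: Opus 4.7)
The plan is to invoke the Hardy-Littlewood circle method, which is the standard framework for all known upper bounds on $G(k)$, combined with the smooth-numbers machinery developed by Vaughan and Wooley. Since this is cited work rather than a result native to the present paper, the intent of a proof sketch here is to indicate the shape of the argument one would run, not to reprove it from scratch.

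One sets up the generating exponential sum $f(\alpha) = \sum_{1 \le x \le P} e(\alpha x^7)$ together with a smooth-number variant that restricts $x$ to integers whose prime factors are all below $P^{\eta}$ for some small $\eta > 0$. The number of representations of a large integer $n$ as a sum of $s = 33$ positive seventh powers is then essentially $\int_0^1 f(\alpha)^{s} e(-\alpha n)\, d\alpha$. First I would partition $[0,1]$ into major arcs (neighborhoods of rationals $a/q$ with small denominators) and minor arcs. On the major arcs, a standard local-global analysis produces a main term of order $n^{s/7 - 1}$ times the product of the singular series and singular integral, both of which are bounded away from $0$ once $s$ comfortably exceeds the $p$-adic and archimedean obstructions at $k = 7$; this step is essentially routine given the size of $s$.

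The real work lies on the minor arcs, where one needs mean-value estimates of the form
\[
\int_0^1 |f(\alpha)|^{2t}\, d\alpha \ll P^{2t - 7 + \Delta_t}
\]
with permissible exponents $\Delta_t$ small enough that, after applying H\"older's inequality to combine a pointwise Weyl-type bound on $f(\alpha)$ with such a mean-value, the minor-arc contribution is $o(n^{s/7 - 1})$. The Vaughan-Wooley efficient differencing process, together with the iterative inequalities it imposes on the sequence $(\Delta_t)$, is precisely what allows $s = 33$ to suffice for $k = 7$; cruder Weyl-type bounds as in Hardy-Littlewood would require far more variables.

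The main obstacle, and the reason the bound has not been improved substantially further, is the combinatorial bookkeeping inside the iteration: the smooth-number parameter $\eta$, the differencing depth, and the H\"older exponents must be chosen jointly so that the resulting system of inequalities on the $\Delta_t$ closes up to yield the required saving on the minor arcs at exactly $s = 33$. I would not attempt to reconstruct this bookkeeping here; for the purposes of the present paper the statement is used as a black box, and the proof is by direct appeal to \cite{Vaughan_Wooley_5ths}, where all of the above is carried out in detail.
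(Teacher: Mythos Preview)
Your proposal is correct and aligns with the paper: this lemma is not proved in the paper at all but is simply attributed to Vaughan and Wooley with a citation to \cite{Vaughan_Wooley_5ths}, exactly as you acknowledge in your final sentence. Your circle-method sketch is accurate background, but strictly more than the paper provides, since the paper treats the bound $G(7)\le 33$ purely as an imported black box.
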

\noindent Again in the case of seventh powers, the results of Waring's problem are established for \textit{sufficiently large} $n$. Numerical experimentation indicates the smallest such number.
\begin{conjecture}\label{conj for 7ths}
    Every positive integer greater than $9930770$ is the sum of $33$ positive seventh powers.
\end{conjecture}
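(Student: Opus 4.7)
The plan is to combine an effective version of the Vaughan--Wooley bound $G(7) \leq 33$ with direct computational verification. The first step is a sieve computation in the spirit of Section 2: for every $n$ in the range $[1, 9930770 + c]$, with $c$ a modest safety margin, test whether $n$ admits a $(33,7)$-representation by iteratively marking $(j,7)$-representable integers for $j = 1, 2, \ldots, 33$, using the rule that $n$ is $(j+1,7)$-representable if and only if $n - a^7$ is $(j,7)$-representable for some positive integer $a$. This confirms both that $9930770$ itself fails to be $(33,7)$-representable (pinning down the exact threshold) and that every integer in an initial window $(9930770, M_0]$ succeeds.

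The second step is to propagate the confirmed window upward using Theorem \ref{representation_existence}. Starting from a verified interval $(n_0, N)$ of width $N - n_0$, any $a$ satisfying $a^7 - (a-1)^7 < N - n_0$ yields a new verified interval $(n_0 + 1, N + a^7)$. Since $a^7 - (a-1)^7 \sim 7a^6$, iterating this extension lets us push the verified upper bound to a large but computationally attainable threshold $M_1$ with each window growing substantially once $N - n_0$ is moderately large. The aim is to choose $M_1$ large enough to meet any explicit threshold coming from the analytic side.

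The main obstacle is the ineffectiveness of Lemma \ref{lemma_for_sevenths}: the Vaughan--Wooley argument via the circle method does not immediately yield an explicit $N_0$ beyond which every integer is the sum of at most $33$ positive seventh powers, and without such a value, no finite computation can bridge the gap to infinity. Resolving this would require a careful effective reworking of the analytic estimates, with explicit constants tracked throughout both the major-arc and minor-arc analyses. A secondary, weaker route is Theorem \ref{n*_application} applied to $n^* = 822480142011$, which delivers an effective statement at $j = 40$; however, descending from $40$ to $33$ seventh powers cannot be achieved merely by absorbing or stripping off summands, and would require a new $n^*$ realizing $d = 0$, which the heuristic evidence described in Section 3 suggests is unavailable for $k = 7$. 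I therefore expect the analytic effectivization, rather than the sieve or the window-extension, to be the dominant difficulty in converting this conjecture into a theorem.
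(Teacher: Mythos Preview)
The paper does not prove this statement: it is labeled a conjecture and is offered purely as the outcome of numerical experimentation, in parallel with Conjectures \ref{Conjecture_n>87918 for fifths}, \ref{smallest n conj for 6ths}, and \ref{smallest n conj for 8ths}. There is therefore no ``paper's own proof'' to compare against. Your write-up is candid about this and correctly isolates the decisive obstruction: Lemma \ref{lemma_for_sevenths} is ineffective, so no finite sieve plus window-extension can reach infinity without first producing an explicit $N_0$ from the Vaughan--Wooley circle-method argument. That diagnosis matches the paper's own stance, which is why the statement remains a conjecture.

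One technical point about your second step deserves tightening. Theorem \ref{representation_existence} advances $j$ by one with each application: from a verified $(j,k)$-interval one obtains a larger $(j+1,k)$-interval, not a larger interval at the same $j$. So you cannot ``iterate this extension'' while staying at $j=33$. The way the paper actually deploys the theorem (see the proof of Theorems \ref{Theorem_2_fifths} and \ref{Theorem_3_fifths}) is to start from a computationally verified interval at some $j_0<33$ and climb one step at a time, gaining interval length at each increment; this gives at most $33-j_0$ extensions, not an unbounded sequence. That still produces a very large verified interval at $j=33$, but its size is capped by the number of available increments, which reinforces rather than removes the need for an effective analytic threshold on the far side.
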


Define the set $\mathbf{B}^7$ by
\[
\mathbf{B}^7 = \{1, 2, 3, 4, 5, 6, 7, \ldots, 248950, 249077\}.
\]
The full set $\mathbf{B}^7$ has $127839$ elements and is listed in the \nameref{Appendix}.

\begin{theorem}\label{Theorem_2_sevenths}
Every positive integer $n$ is the sum of $j \geq 245$ positive seventh powers provided $n > j - 1$ and $n \neq j + b$ for $b \in \mathbf{B}^7$.
\end{theorem}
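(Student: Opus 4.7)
I would follow Zenkin's template, specialized to the index $j = 245 = g(1,7)$ that appears in Table \ref{Table_Powers}. The proof breaks into a large-$n$ asymptotic argument via Theorem \ref{n*_application}, an induction in $j$ to promote $(40,7)$-representability up to $(245,7)$-representability, a further induction in $j$ to obtain the conclusion for all $j \geq 245$, and finally a finite computational verification for the bounded range of $n$ below the asymptotic threshold. The asymptotic and inductive steps are essentially formal; the computation is the genuine obstacle.

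\textbf{Asymptotic step at $j = 245$.} By Lemma \ref{lemma_for_sevenths}, there is an (implicit) $N$ with every integer greater than $N$ being $(33,7)$-representable. I would apply Theorem \ref{n*_application} with $k = 7$, $d = 7$, $b = 33$, and $n^* = 822480142011$, which as per Section 3 and Table \ref{Table_Powers} admits $(j,7)$-representations for every $7 \le j < 40$; this gives that every integer greater than $N + n^*$ is $(40,7)$-representable. Appending $1^7$ two hundred and five times to such a representation shows that every integer exceeding $N + n^* + 205$ is $(245,7)$-representable. Since $\max_{b \in \mathbf{B}^7}(245 + b) = 245 + 249077$ lies well below $N + n^* + 205$, no exceptional point from the set $\{245 + b : b \in \mathbf{B}^7\}$ is in this large-$n$ window, so the conclusion of the theorem holds at $j = 245$ for all $n > N + n^* + 205$.

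\textbf{Induction in $j$ for $j > 245$.} Assuming the statement at level $j$, fix $n > j$ with $n \ne (j+1) + b$ for every $b \in \mathbf{B}^7$. If $n = j + 1$, the representation is $(j+1)\cdot 1^7$; otherwise $n - 1 > j - 1$, and the hypothesis forces $n - 1 \ne j + b$ for every $b \in \mathbf{B}^7$ (else $n = (j+1) + b$). The inductive hypothesis then yields a $(j,7)$-representation of $n - 1$, and appending $1^7$ produces a $(j+1,7)$-representation of $n$. Iterating supplies the theorem at every $j \geq 245$.

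\textbf{Computational verification and main obstacle.} What remains is to certify $(245,7)$-representability for every $n$ with $244 < n \le N + n^* + 205$ that is not of the form $245 + b$ with $b \in \mathbf{B}^7$. A direct sieve at level $245$ across a window on the order of $10^{11}$ is memory-prohibitive, so I would sieve instead at a moderate level $j_0$ (for example $j_0 = 40$, the natural output of Theorem \ref{n*_application}) up to a tractable threshold $T$, and then iteratively apply Theorem \ref{representation_existence} to promote the level one step at a time until $j = 245$ is reached; isolated stragglers not settled this way can be resolved by the depth-first search of Section \ref{Methods} modeled on \texttt{PowersRepresentations}. The delicate point, and the main obstacle, is maintaining at each intermediate level a sufficiently wide verified interval of $(j,7)$-representable integers so that the hypothesis $a^7 - (a-1)^7 < N - n$ of Theorem \ref{representation_existence} remains satisfied through all $\sim 205$ promotion steps, while also cross-checking against $\mathbf{B}^7$ to ensure the exceptional offsets inherited from each intermediate level disappear exactly when expected.
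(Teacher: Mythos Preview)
Your inductive step for $j>245$ is clean and correct (it is essentially the content of Corollary~\ref{subsets_overline}, and the paper achieves the same conclusion via Theorem~\ref{B_consistency} by checking $\mathbf{B}^7_{246}=\{1\}\cup\{n+1:n\in\mathbf{B}^7_{245}\}$ together with the floor condition $\lfloor\sqrt[7]{249077(1+1/127)}\rfloor=\lfloor\sqrt[7]{249322}\rfloor$). The asymptotic invocation of Theorem~\ref{n*_application} is also fine in principle.

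The genuine gap is in your choice $b=33$. Lemma~\ref{lemma_for_sevenths} is ineffective: the threshold $N$ beyond which every integer is a sum of at most $33$ positive seventh powers is not known, not even in order of magnitude. Your computational step then asks you to sieve the interval $244<n\le N+n^*+205$, whose right endpoint you cannot write down; the phrase ``a window on the order of $10^{11}$'' silently drops $N$, but $N$ could dwarf $n^*$. This is exactly why the paper, in the unconditional part of the proof for fifths, explicitly abandons $G(5)\le 17$ and falls back on $g(5)=37$ (``since $n_0$ is unknown''). For sevenths the paper does the same: it takes $b=g(7)=143$, so that $N=0$ in Theorem~\ref{n*_application}, and uses that $n^*=822480142011$ is $(j,7)$-representable for every $7\le j\le 149$. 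This yields $(150,7)$-representability for all $n>n^*$ with an \emph{explicit} cutoff, after which the finite check and the climb to $j=245$ proceed as you describe. Replace $b=33$ by $b=143$ (and $40$ by $150$, appending $95$ ones rather than $205$) and your argument goes through.
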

\begin{theorem}\label{Theorem_3_sevenths}
Every positive integer $n$ is the sum of $j$ positive seventh powers for $244 \geq j \geq 51$, provided $n > j - 1$ and $n \neq j + b$ for $b \in \mathbf{B}^7 \cup \overline{\mathbf{B}_j^7}$ where $\overline{\mathbf{B}_j^7}$ is a finite set, listed in the \nameref{Appendix}.
\end{theorem}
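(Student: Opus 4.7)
The plan mirrors the structure used for the fifth and sixth power cases, splitting the argument by the size of $n$: a theoretical upper bound coming from Theorem \ref{n*_application}, and a direct computational verification below that bound. For each $j$ in the range $51 \le j \le 244$, proceed as follows.

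First, choose a multiplier $c = c(j) \in \{2, 3, \ldots, 7\}$ so that the interval $[j-33,\, j-1]$ is contained in $[7c,\, 39c]$; such a $c$ exists for every $j$ in the stated range. Since $n^* = 822480142011$ is $(i,7)$-representable for every $i \in [7, 39]$ (Table \ref{Table_Powers}), the scaled value $c \cdot n^*$ is $(i,7)$-representable for every $i \in [7c, 39c]$: partition the target count $i$ into $c$ summands $r_1, \ldots, r_c$ each lying in $[7, 39]$ (always achievable because $i/c \in [7,39]$ and a greedy split works) and concatenate representations of $n^*$ with those part counts. Applying Theorem \ref{n*_application} with $c \cdot n^*$ in the role of $n^*$ and $d = j - 33$, and using $G(7) \le 33$ from Lemma \ref{lemma_for_sevenths}, yields a bound $M_j = N + c \cdot n^*$ such that every positive integer exceeding $M_j$ is $(j,7)$-representable, where $N$ is the threshold of Lemma \ref{lemma_for_sevenths} (conjecturally $N = 9930770$ via Conjecture \ref{conj for 7ths}).

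Second, the sieve algorithm of Section \ref{Computation} determines $(j,7)$-representability for every $n \leq M_j$, producing $\overline{\mathbf{B}_j^7}$ after discarding those $n$ with $n - j \in \mathbf{B}^7$. To avoid rerunning the sieve from scratch for every $j$, one starts at the lowest $j$ in the range and propagates upward via Theorem \ref{representation_existence}, each application extending the verified range of $n$ by $a^7$ for a suitable $a$. Finally, for each candidate $b \in \overline{\mathbf{B}_j^7}$, the second computational algorithm of Section \ref{Computation} (analogous to Mathematica's \texttt{PowersRepresentations}) confirms that $j + b$ indeed possesses no $(j,7)$-representation, completing the consistency check.

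The principal obstacle is computational scale: $M_j$ climbs to roughly $5.8 \times 10^{12}$ when $c = 7$, which lies well beyond the natural capacity of the sieve (requiring $2 M_j$ bits). Mitigating this forces reliance on the inductive bootstrap of Theorem \ref{representation_existence}, so that only one full-range sieve is performed and higher $j$ values are obtained by extension; one must carefully track the gap $N - n$ to keep the hypothesis $a^7 - (a-1)^7 < N - n$ satisfied at each step and monitor the candidate exception set as $j$ grows, since those exceptions determine how large the lower endpoint $n$ of each exception-free interval becomes.
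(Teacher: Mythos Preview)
Your proposal has a real gap and an unnecessary detour.

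The detour: you only use that $n^*=822480142011$ is $(i,7)$-representable for $i\in[7,39]$, and then manufacture larger ranges via the scaling $c\cdot n^*$. But by the paper's own definition of $n^*$ (Section~\ref{Introduction}) and as stated explicitly in the proof section, $n^*$ is $(i,7)$-representable for every $i\in[7,149]$, i.e.\ all the way up to $g(7)+d-1$. So $c=1$ already covers every interval $[j-33,j-1]$ you need, and the multiplier construction only inflates the sieve bound from $n^*\approx 8.2\times10^{11}$ to $7n^*\approx 5.8\times10^{12}$ for no benefit.

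The gap: by invoking Lemma~\ref{lemma_for_sevenths} ($G(7)\le 33$) you inherit its unspecified threshold $N$. Your bound $M_j=N+c\cdot n^*$ is therefore not a number you can actually sieve up to; the whole computation becomes conditional on Conjecture~\ref{conj for 7ths}. The paper avoids this by taking $b=g(7)=143$ in Theorem~\ref{n*_application}, for which the threshold is $0$. This yields unconditionally that every integer exceeding $n^*$ is $(150,7)$-representable; the sieve plus repeated application of Theorem~\ref{representation_existence} (starting from a much smaller $j$, as in the fifth-power proof) then fills in $n\le n^*$, pinning down $\mathbf{B}_{150}^7$ exactly. The chain of inclusion (Theorem~\ref{subsets}) carries this upward through $j=244$, and Theorem~\ref{B_consistency} certifies $\mathbf{B}^7$.

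For $51\le j<150$ the paper itself concedes (Section~\ref{Improvements}) that the technique only \emph{proves finiteness}; the listed sets rely on the conjectured threshold. So in that sub-range your conditional argument is not worse in principle than the paper's---but you should state clearly which part is unconditional (the $j\ge 150$ half, via $g(7)$) and which is not, and you should drop the scaling trick so that the computational bound stays at $n^*$ rather than a multiple of it.
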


\begin{conjecture}\label{Conjecture_sevenths}
Every positive integer $n$ is the sum of $j$ positive seventh powers for $50 \geq j \geq 25$, provided $n > j - 1$ and $n \neq j + b$ for $b \in \mathbf{B}^7 \cup \overline{\mathbf{B}_j^7}$ where $\overline{\mathbf{B}_j^7}$ is a finite set, conjectured in the \nameref{Appendix}.
\end{conjecture}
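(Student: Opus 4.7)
The plan is to mirror the architecture of Theorem \ref{Theorem_3_sevenths} and extend it downward through the range $25 \le j \le 50$. For each such $j$ the two ingredients are: (a) a threshold $N_j$ beyond which every integer is $(j,7)$-representable, produced by Theorem \ref{n*_application}, and (b) an exhaustive sieve of the interval below $N_j$ via the algorithm of Section \ref{Computation}, which together pin down the finite set $\overline{\mathbf{B}_j^7}$ and permit the comparison against the list recorded in the Appendix.

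Concretely, for each target $j$ one searches for a witness $n^*_j$ that admits a $(j',7)$-representation for every $j'$ in a consecutive block $d \le j' < b + d$ with $b + d = j$. Here $b$ must be chosen so that Lemma \ref{lemma_for_sevenths} (or a numerical analogue of it for an explicit $N$) guarantees that every integer beyond $N$ is a sum of at most $b$ positive seventh powers. Feeding $(n^*_j, d, b)$ into Theorem \ref{n*_application} then delivers $(j,7)$-representability for all $n > N + n^*_j$. Running the sieve from Section \ref{Computation} up to this bound returns the complete list of exceptions; those exceeding $j$ and not already in $\mathbf{B}^7$ furnish $\overline{\mathbf{B}_j^7}$. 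Performing this procedure for each $j \in \{25, \dots, 50\}$ would convert the conjecture into a theorem.

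The decisive obstruction is precisely what relegates the statement to a conjecture. The best available bound is $G(7) \le 33$, so the scheme above applies unconditionally only to $j \ge 33$; for $25 \le j \le 32$ one would need either a sharper upper bound on $G(7)$ (currently open) or a direct numerical verification that some $b < 33$ positive seventh powers already suffice beyond an explicitly controllable $N$, a computation well beyond present reach. Table \ref{Table_Powers} already exhibits $n^* \approx 8.2 \times 10^{11}$ at $d = 7$, and the witnesses required for smaller $d$ (forced by smaller $b + d$) are expected to be dramatically larger; the $O(N)$ memory footprint of the sieve becomes the binding constraint long before the lower end of the target range is attained.

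For the upper end of the range ($j$ close to $50$), the obstruction is milder but still computational: even when Theorem \ref{n*_application} can be invoked in principle, producing a witness $n^*_j$ that is simultaneously $(j',7)$-representable across a long consecutive block of $j'$ demands a joint sieve whose state space grows rapidly with $j$. Thus the present work supports the conjecture with sieve data pushed as far as current hardware permits, and the heuristic expectation that no further exceptions arise, but a complete proof for all $25 \le j \le 50$ awaits either an analytic improvement in the bounds for $G(7)$ or a qualitative advance in the representational sieving framework.
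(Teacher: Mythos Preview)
This statement is recorded in the paper as a \emph{conjecture}; the paper supplies no proof, and the sets $\overline{\mathbf{B}_j^7}$ for $25 \le j \le 50$ appear in the Appendix only as conjectural output of the sieve. Your proposal is likewise not a proof but an explanation of the obstructions, and in that respect it agrees with the paper's own position: neither you nor the authors claim the statement is established.

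Your diagnosis of where the method stalls contains a slip, however. You write that the scheme ``applies unconditionally only to $j \ge 33$,'' but Theorem~\ref{n*_application} delivers $(b+d,7)$-representability, not $(b,7)$-representability, so even taking $b = 33$ one reaches only $j \ge 33 + d$; with the paper's witness one has $d = 7$, hence $j \ge 40$. Moreover, Lemma~\ref{lemma_for_sevenths} is an asymptotic statement with no explicit threshold $N$, so feeding $b = 33$ into Theorem~\ref{n*_application} yields only a ``sufficiently large'' conclusion, never an unconditional one. The actual boundary at $j = 50/51$ between this conjecture and Theorem~\ref{Theorem_3_sevenths} reflects how far the paper's unconditional machinery---$g(7) = 143$ combined with the bootstrapping of Theorem~\ref{representation_existence} and direct sieving up to $n^*$---could be pushed computationally, rather than the value $33$ you single out.
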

The sets $\overline{\mathbf{B}_j^7}$ for $24\leq j \leq8$ are also conjectured to be finite, but are too large to be computed completely, and are not included in the \nameref{Appendix}. The sets $\overline{\mathbf{B}_j^7}$ for $1\leq j \leq7$ are known to be infinite.

\section{Positive Eighth Powers}
What positive integers cannot be expressed as the sum of exactly $j$ positive eighth powers? Vaughan \& Wooley \cite{Vaughan_Wooley_5ths} gave the current upper bound for $G(8)$.
\begin{lemma}[Vaughan \& Wooley]\label{lemma_for_eighths}
Every sufficiently large positive integer can be expressed as a sum of at most $42$ positive eighth powers. 
\end{lemma}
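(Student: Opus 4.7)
The plan is to follow the Hardy--Littlewood circle method in the refined form developed by Vaughan and Wooley, since the result is genuinely analytic rather than combinatorial: no sieve-style construction using an $n^*$ (as in Theorem \ref{n*_application}) can on its own produce a finite bound on $G(k)$ for $k \ge 5$, because such arguments only lift an already-known bound for $G(k)$ from small $j$ to larger $j$. So one must first obtain $G(8) \le 42$ by additive-analytic means.

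First I would set up the generating function $f(\alpha) = \sum_{1 \le x \le P} e(\alpha x^{8})$ with $P = \lfloor n^{1/8} \rfloor$, and express the number of representations of $n$ as a sum of $42$ positive eighth powers by
\begin{equation*}
R(n) \;=\; \int_{0}^{1} f(\alpha)^{42}\, e(-n\alpha)\, d\alpha.
\end{equation*}
The unit interval is then dissected into major arcs $\mathfrak{M}$, a neighborhood of rationals $a/q$ with small $q$, and the complementary minor arcs $\mathfrak{m}$. On $\mathfrak{M}$ one approximates $f(\alpha)$ by the standard product of a complete exponential sum $S(q,a)$ and a smooth integral $v(\beta)$, producing the usual main term
\begin{equation*}
\mathfrak{S}(n)\, J(n) \;\gg\; n^{42/8 - 1}
\end{equation*}
provided the singular series $\mathfrak{S}(n)$ is bounded below, which follows from local solubility of the equation $x_{1}^{8} + \cdots + x_{42}^{8} = n$ in $\mathbb{Z}_{p}$ and $\mathbb{R}$ for every prime $p$; this step is the same routine local analysis Davenport used for fourth powers.

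The real work, and where I expect the main obstacle, is the minor-arc bound. Here one needs
\begin{equation*}
\int_{\mathfrak{m}} |f(\alpha)|^{42}\, d\alpha \;=\; o\!\left(n^{42/8 - 1}\right),
\end{equation*}
and the naive bound coming from Weyl's inequality alone (losing a factor $P^{1 - 2^{1-8}} = P^{1 - 1/128}$) is far too weak. The plan would be to combine a Weyl-type pointwise bound on $|f(\alpha)|$ with a mean-value estimate
\begin{equation*}
\int_{0}^{1} |f(\alpha)|^{2s}\, d\alpha \;\ll\; P^{2s - 8 + \Delta_{s}}
\end{equation*}
via Hölder's inequality, choosing $s$ so that the resulting exponent on $P$ beats $42 - 8$. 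For $k=8$ the sharpest such mean values come from Vaughan--Wooley's efficient differencing (repeatedly shrinking the effective degree) or, in the modern treatment, from the nested efficient congruencing/decoupling bounds that give Vinogradov's mean value theorem in essentially optimal form. Calibrating the number of differencing steps and the split point between major and minor arcs so that exactly $42$ variables suffice is the delicate optimization that distinguishes this result from the weaker $G(8) \le 43$ or $G(8) \le 51$ bounds.

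Finally, to pass from ``almost all'' or ``asymptotic'' representations to the statement that every sufficiently large $n$ is a sum of $42$ eighth powers, one notes that $R(n) \gg n^{42/8 - 1}$ for $n \ge n_{0}$, and in particular $R(n) > 0$ once $n$ exceeds some effective (but astronomical) threshold. No explicit value of $n_{0}$ is asserted by the lemma, which is why Table \ref{Table_Powers} records $G(k)+d = 51$ for $k=8$ via a much larger $n^{*}$: the analytic lemma supplies only the existence of some threshold beyond which every integer is $(42,8)$-representable.
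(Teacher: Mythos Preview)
The paper does not prove this lemma at all: it is stated with attribution to Vaughan and Wooley and the citation \cite{Vaughan_Wooley_5ths}, and is then used as a black-box input to the computational machinery (Theorems~\ref{n*_application} and \ref{representation_existence}). So there is no ``paper's own proof'' to compare against; the authors simply import $G(8)\le 42$ from the literature.

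Your sketch is a reasonable high-level outline of the Hardy--Littlewood circle method as Vaughan and Wooley actually implement it, and you are right that nothing in the present paper's toolkit could produce such a bound on its own. Two small remarks. First, the specific bound $G(8)\le 42$ in the cited Vaughan--Wooley work predates the efficient-congruencing/decoupling resolution of Vinogradov's mean value theorem; it comes from their smooth-Weyl-sum technology and iterated efficient differencing, so invoking decoupling is anachronistic for this particular lemma (though it would give an alternative route today). Second, your sketch is genuinely only a plan: the calibration you allude to---choosing the number of differencing steps, the smooth-number parameters, and the major/minor-arc split so that exactly $42$ variables suffice---is the entire substance of the Vaughan--Wooley paper, and would need to be carried out in full to constitute a proof. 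But since the present paper makes no claim to reprove the lemma, none of that is expected here.
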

\noindent Again in the case of eighth powers, the results of Waring's problem are established for \textit{sufficiently large} $n$. Numerical experimentation indicates the smallest such number.
\begin{conjecture}\label{smallest n conj for 8ths}
    Every positive integer greater than $858367748$ is the sum of at most $42$ positive eighth powers.
\end{conjecture}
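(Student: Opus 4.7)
The plan is to establish the bound computationally using the sieve algorithm described in Section \ref{Computation}, in combination with Theorem \ref{representation_existence} and Vaughan \& Wooley's Lemma \ref{lemma_for_eighths}. First, I would run the memory-efficient sieve up to a threshold $N$ somewhat beyond $858367748$, producing, for each $j$ with $1 \le j \le 42$, the characteristic bit-array of $(j,8)$-representable integers below $N$. Taking the union over all such $j$ gives the full list of integers up to $N$ expressible as a sum of at most $42$ positive eighth powers; the complement of this union should have $858367748$ as its maximum element, which one then verifies directly using the recursive representation-search algorithm from Section 3 to confirm that $858367748$ itself admits no such representation.

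Second, to push the verification beyond $N$, I would iteratively apply Theorem \ref{representation_existence}. Starting from a dense interval $(n,N)$ of integers that are $(j,8)$-representable for some $j \le 42$, the theorem propagates $(j+1,8)$-representability throughout $(n+1, N+a^8)$ whenever $a^8 - (a-1)^8 < N - n$. Since the gap $a^8 - (a-1)^8$ grows only polynomially in $a$ while the window width can be made exponentially large by the sieve, a short chain of applications extends the verified representability range well past any practical obstruction, matching onto the asymptotic regime handled by Lemma \ref{lemma_for_eighths}.

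The main obstacle, and the precise reason this is stated as a conjecture rather than a theorem, is that Lemma \ref{lemma_for_eighths} of Vaughan \& Wooley is ineffective: it asserts $42$-representability for all sufficiently large $n$ but does not furnish an explicit threshold. Consequently, even an exhaustive finite sieve combined with Theorem \ref{representation_existence} cannot categorically rule out an enormous exceptional $n$ lying between the computational frontier and the unspecified Vaughan--Wooley cutoff. Making the lemma effective, even crudely, would immediately upgrade Conjecture \ref{smallest n conj for 8ths} to a theorem by the procedure above; this effectivization is the genuine bottleneck, as the sieve-plus-propagation machinery is already well within computational reach for the value $858367748$ observed in Table \ref{Table_Powers}.
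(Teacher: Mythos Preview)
Your analysis is essentially correct and matches the paper's treatment: this statement is labeled a \emph{conjecture} precisely because Lemma~\ref{lemma_for_eighths} is ineffective, and the paper offers no proof beyond the remark that ``numerical experimentation indicates the smallest such number.'' Your sieve-based determination of $858367748$ as the largest computational exception, followed by the observation that the gap to the unspecified Vaughan--Wooley threshold cannot be bridged, is exactly the paper's position.

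One small caution on your second paragraph: Theorem~\ref{representation_existence} propagates from exactly-$j$ to exactly-$(j+1)$ representability, so each application consumes one unit of the budget toward $42$. Starting from an interval on which everything is $(j_0,8)$-representable for some fixed $j_0<42$, you get at most $42-j_0$ iterations before exceeding the ``at most $42$'' constraint. This still yields an enormous (but finite) extension of the verified range, so it does not change your conclusion---the ineffectivity of Lemma~\ref{lemma_for_eighths} remains the genuine obstruction---but the phrasing ``matching onto the asymptotic regime'' overstates what the propagation can guarantee, since there is no known bound to match onto.
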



Define the set $\mathbf{B}^8$ by
\[
\mathbf{B}^8 = \{1, 2, 3, 4, 5, 6, 7, \ldots, 1889986, 1890241\}.
\]
The full set $\mathbf{B}^8$ has $945121$ elements and is listed in the \nameref{Appendix}.

\begin{theorem}\label{Theorem_2_eighths}
Every positive integer $n$ is the sum of $j \ge 334$ positive eighth powers provided $n > j - 1$ and $n \neq j + b$ for $b \in \mathbf{B}^8$.
\end{theorem}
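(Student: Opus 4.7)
The plan is induction on $j$, taking $j = 334$ as the base case. The inductive step would be essentially immediate: assuming the theorem at some $j \ge 334$, one takes any $n > j$ with $n \neq (j+1) + b$ for every $b \in \mathbf{B}^8$, and sets $m = n - 1$. Then $m > j - 1$ and $m \neq j + b$ for any $b \in \mathbf{B}^8$ (otherwise $n = m + 1 = (j+1) + b$, a contradiction). By the induction hypothesis, $m$ would admit a $(j, 8)$-representation, and appending $1^8$ to it would yield a $(j+1, 8)$-representation of $n$.

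The work would therefore concentrate in the base case $j = 334$, which I would split into a large-$n$ theoretical portion and a finite computational check. For the large-$n$ portion, I would apply Theorem \ref{n*_application} with $n^* = 17373783550950$ from Table \ref{Table_Powers}, $d = 9$, and $b = 42$ supplied by Lemma \ref{lemma_for_eighths} together with the threshold $N$ of Conjecture \ref{smallest n conj for 8ths}. This would yield $(51, 8)$-representability for every $n > N + n^*$. Next I would iteratively apply the elementary observation that if every $n > M$ is $(j, 8)$-representable then every $n > M + 1$ is $(j+1, 8)$-representable (via the $(n - 1) + 1^8$ trick), invoking it $283$ times in succession, to obtain $(334, 8)$-representability for all $n > N + n^* + 283$.

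For the finite check, I would use the sieve-type algorithm of Section \ref{Computation} to enumerate all $(334, 8)$-representable integers in the range $334 < n \le N + n^* + 283$ and verify that the non-representable integers are precisely $\{334 + b : b \in \mathbf{B}^8\}$; this would capture every possible exception since $\max \mathbf{B}^8 = 1890241$ is tiny compared with $n^*$. The main obstacle will be this last step: the bound $N + n^* + 283 \approx 1.7 \cdot 10^{13}$ is far beyond the reach of a single in-memory bit-vector sieve, so the computation would need to be partitioned (for example, processing overlapping blocks small enough to fit in RAM while stitching together the resulting representability lists), and the targeted representation-finding algorithm of Section \ref{Methods} would likely need to be called on individual borderline integers to certify the exception list.
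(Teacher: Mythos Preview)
Your inductive step is correct and, for the one-directional statement, simpler than what the paper does: the $(n-1)+1^8$ trick already gives the passage from $j$ to $j+1$.

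The base case, however, has a genuine gap. You invoke the threshold $N$ of Conjecture~\ref{smallest n conj for 8ths} to feed into Theorem~\ref{n*_application} with $b=42$, but that value of $N$ is only conjectured; Lemma~\ref{lemma_for_eighths} by itself gives no effective bound, so the finite check you describe is conditional on an unproven statement. The paper avoids this by taking $b=g(8)=279$, for which $N=0$ unconditionally. This is precisely why the paper verifies that $n^*=17373783550950$ is $(j,8)$-representable for every $9\le j\le 287=d+b-1$: Theorem~\ref{n*_application} then gives $(288,8)$-representability for all $n>n^*$, and your $+1^8$ chain yields $(334,8)$-representability for $n>n^*+46$.

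After that fix your route and the paper's still differ in two places. For the finite range $n\le n^*$ the paper does not run a single sieve at $j=334$; instead (following the template spelled out for $k=5$) it numerically checks a much shorter interval at a small $j$ and then repeatedly applies Theorem~\ref{representation_existence} to stretch that interval past $n^*$, which is far lighter than your blocked sieve. For the step from $j=334$ to all larger $j$ the paper invokes Theorem~\ref{B_consistency}, checking $\mathbf{B}^8_{335}=\{1\}\cup\{n+1:n\in\mathbf{B}^8_{334}\}$ together with the floor condition $\lfloor\sqrt[8]{1890241(1+\tfrac{1}{255})}\rfloor=\lfloor\sqrt[8]{1890575}\rfloor$. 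That machinery also certifies that the listed $\mathbf{B}^8$ is the true stabilized exception set, which is what pins down $g(1,8)=334$; your plain induction proves the theorem as stated but does not by itself deliver this stronger conclusion.
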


\begin{theorem}\label{Theorem_3_eighths}
    Every positive integer $n$ is the sum of $j$ positive eighth powers for $120 \le j \le 333$, provided $n > j - 1$ and $n \neq j + b$ for $b \in \mathbf{B}^8 \cup \overline{\mathbf{B}^8_j}$ where $\overline{\mathbf{B}^8_j}$ is a finite set, conjectured in the \nameref{Appendix}
\end{theorem}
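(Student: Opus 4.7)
The plan is to reduce the case of general $j \in [120,333]$ to the single case $j=51$ via ``$1^8$-padding,'' and to combine this with a direct sieve computation in the remaining finite range. First, I would invoke Lemma \ref{lemma_for_eighths} together with Conjecture \ref{smallest n conj for 8ths} to supply the hypothesis of Theorem \ref{n*_application} with $b = 42$ and $N = 858367748$; since $n^* = 17373783550950$ (see Table \ref{Table_Powers}) is $(j,8)$-representable for $9 \le j \le 51$, Theorem \ref{n*_application} yields that every integer larger than $M := N + n^*$ is $(51,8)$-representable.

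Second, for each $j \in [51, 333]$, I would extend this by padding with $1^8$: if $n > M + (j - 51)$, then $n - (j-51) > M$ is $(51,8)$-representable as $\sum_{i=1}^{51} a_i^8$, so that
\[
n \;=\; \sum_{i=1}^{51} a_i^8 \;+\; \sum_{\iota=1}^{j-51} 1^8
\]
is a $(j,8)$-representation. Consequently, every non-$(j,8)$-representable integer $n > j - 1$ must lie in the finite interval $(j-1,\, M + (j-51)]$, which already establishes the finiteness of $\overline{\mathbf{B}^8_j}$ asserted in the theorem.

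Third, to identify the specific members of $\overline{\mathbf{B}^8_j}$, I would run the memory-intensive sieve of Subsection \ref{Computation} to tabulate all $(j,8)$-representable integers up to the largest bound that memory allows, using Theorem \ref{representation_existence} to propagate the explicitly verified range as $j$ grows. The integers in this range that are not marked as $(j,8)$-representable, after removing those in $\{1,\ldots,j-1\}$ and those of the form $j+b$ with $b \in \mathbf{B}^8$, populate $\overline{\mathbf{B}^8_j}$ once shifted down by $j$.

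The main obstacle is computational rather than structural: the theoretical bound $M \approx 1.74 \times 10^{13}$ far exceeds the range the sieve can exhaust in practice, even after gap-closing via Theorem \ref{representation_existence}. Finiteness of $\overline{\mathbf{B}^8_j}$ follows rigorously from the padding argument combined with Lemma \ref{lemma_for_eighths}, but certifying that the sets tabulated in the Appendix are exactly $\overline{\mathbf{B}^8_j}$ would require checking every candidate in $(j-1,\, M + (j-51)]$. Closing this gap would simultaneously resolve Conjecture \ref{smallest n conj for 8ths} and convert the tabulated $\overline{\mathbf{B}^8_j}$ from conjecture into theorem.
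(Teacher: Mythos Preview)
Your proposal is correct and follows the same route as the paper: invoke Theorem~\ref{n*_application} with $n^{*}=17373783550950$ and $d=9$, pad with copies of $1^{8}$ to reach each $j$ in the stated range, and tabulate the remaining finite exceptions by the sieve of Section~\ref{Methods} together with Theorem~\ref{representation_existence}. The paper additionally records that $n^{*}$ is $(j,8)$-representable over the longer range $9\le j\le 287$, which permits the unconditional choice $b=g(8)=279$, $N=0$ in Theorem~\ref{n*_application} (this is what drives Theorem~\ref{Theorem_2_eighths}), but for the finiteness assertion of Theorem~\ref{Theorem_3_eighths} your $b=42$ argument via Lemma~\ref{lemma_for_eighths} is exactly what is used.
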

For $64\leq j \leq 119$, the sets $\overline{\mathbf{B}^8_j}$ are currently too large to be computed completely, though the proof that they are finite can be obtained for \textit{sufficiently large} positive integers.
\begin{theorem}
    Every sufficiently large positive integer $n$ is the sum of $j$ positive eighth powers for $64 \le j \le 119$, provided $n > j - 1$ and $n \neq j + b$ for $b \in \mathbf{B}^8 \cup \overline{\mathbf{B}^8_j}$ where $\overline{\mathbf{B}^8_j}$ is a finite set.
\end{theorem}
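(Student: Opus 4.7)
The plan is to reuse the two-tool framework that underlies Theorems~\ref{Theorem_2_eighths} and~\ref{Theorem_3_eighths}: apply Theorem~\ref{n*_application} once to seed a representability interval, then iterate Theorem~\ref{representation_existence} upward in $j$. The only step I would omit is the explicit enumeration of $\overline{\mathbf{B}^8_j}$ via the sieve of Section~\ref{Computation}, and it is precisely that step which cannot be carried out in this range.

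Concretely, I would start from $n^{*} = 17373783550950$ in Table~\ref{Table_Powers}, which is $(j,8)$-representable for every $9 \le j \le 51$. Feeding this together with Lemma~\ref{lemma_for_eighths} (the sufficient-largeness form of $G(8) \le 42$) into Theorem~\ref{n*_application} with $d = 9$ and $b = 42$ produces a threshold $M_{51}$ such that every integer exceeding $M_{51}$ admits a $(51,8)$-representation. Then I would induct upward in $j$ using Theorem~\ref{representation_existence}: given that every integer in $(M_j, \infty)$ is $(j,8)$-representable, the interval width is unbounded, so for any positive $a$ the hypothesis $a^{8} - (a-1)^{8} < N - n$ is trivially met, and one concludes $(j+1,8)$-representability of every integer in $(M_j + 1, \infty)$. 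Iterating from $j = 51$ up through $j = 119$ yields a finite threshold $M_j$ for every $j$ in the target range, which forces $\overline{\mathbf{B}^8_j} \subseteq [1, M_j - j]$ and in particular makes it finite.

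The main obstacle is not the structural induction above but a quantitative one: the thresholds $M_j$ produced by this chain sit at the scale $n^{*} + N$, where $N$ is the implicit constant from Lemma~\ref{lemma_for_eighths} (bounded only conjecturally in Conjecture~\ref{smallest n conj for 8ths}). The memory-intensive sieve of Section~\ref{Computation} requires on the order of $M_j$ bits of RAM, and with $|\mathbf{B}^8| = 945121$ already populating the low end of the range, an exhaustive sieve up to $M_j$ is well beyond current computational budgets for $j < 120$. That computational barrier -- rather than any obstruction to the structural argument -- is exactly what forces the qualifier ``sufficiently large'' in the statement and blocks an upgrade to the pointwise form of Theorem~\ref{Theorem_3_eighths} in this range of $j$.
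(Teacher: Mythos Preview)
Your proposal is correct and matches the paper's implicit argument: the paper does not spell out a proof for this particular theorem but indicates it follows from the same machinery, namely Theorem~\ref{n*_application} applied with $b = G(8) \le 42$ and $d = 9$ to land at $j = 51$, followed by the trivial ``append $1^8$'' step to climb in $j$. Two minor remarks: the paper actually records that $n^* = 17373783550950$ is $(j,8)$-representable for $9 \le j \le 287$ (not just up to $51$), and your inductive step is more cleanly phrased as ``if $m > M_j + 1$ then $m-1$ is $(j,8)$-representable, so $m = (m-1) + 1^8$ is $(j+1,8)$-representable'' rather than invoking Theorem~\ref{representation_existence} on a half-infinite interval---but neither point affects the validity of your argument.
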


The sets $\overline{\mathbf{B}_j^8}$ for $63\leq j \leq32$ are also conjectured to be finite, but are too large to be computed completely, and are not included in the \nameref{Appendix}. The sets $\overline{\mathbf{B}_j^8}$ for $1\leq j \leq32$ are known to be infinite.

\section{Positive Ninth Powers}
What positive integers cannot be expressed as the sum of exactly $j$ positive ninth powers? Vaughan \& Wooley \cite{Vaughan_Wooley_5ths} gave the current upper bound for $G(9)$.
\begin{lemma}[Vaughan \& Wooley]\label{lemma_for_ninths}
Every sufficiently large positive integer can be expressed as a sum of at most $50$ positive ninth powers. 
\end{lemma}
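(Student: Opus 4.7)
The plan is to deduce the lemma by the Hardy--Littlewood circle method, as refined in the Vaughan--Wooley program. For $n$ large, set $P = \lfloor n^{1/9} \rfloor$ and introduce the Weyl sum
\[
f(\alpha) = \sum_{1 \le x \le P} e(\alpha x^9), \qquad e(t) = e^{2\pi i t}.
\]
The number of representations of $n$ as a sum of $s = 50$ positive ninth powers is
\[
R_{50}(n) = \int_0^1 f(\alpha)^{50} e(-\alpha n) \, d\alpha,
\]
and it is enough to show $R_{50}(n) > 0$ for all sufficiently large $n$, since any representation with fewer than $50$ summands can be padded with copies of $1^9$.

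First, I would split $[0,1]$ into major arcs $\mathfrak{M}$ --- small neighborhoods of rationals $a/q$ with $q$ bounded by a slowly growing function of $P$ --- and minor arcs $\mathfrak{m}=[0,1) \setminus \mathfrak{M}$. A standard major-arc calculation produces the expected main term
\[
\int_{\mathfrak{M}} f(\alpha)^{50} e(-\alpha n) \, d\alpha = \mathfrak{S}(n) J(n) + o\bigl(n^{50/9 - 1}\bigr),
\]
where $\mathfrak{S}(n)$ is a singular series (an Euler product of local $p$-adic densities) and $J(n) \asymp n^{50/9-1}$ is the singular integral. One must check that $\mathfrak{S}(n) \gg 1$ uniformly in $n$; for $s = 50 \ge G(9)$ this reduces to solubility of $\sum_{i=1}^{50} x_i^9 \equiv n \pmod{p^m}$ for every prime power $p^m$, which is routine.

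The principal obstacle is the minor-arc bound
\[
\int_{\mathfrak{m}} |f(\alpha)|^{50} \, d\alpha = o\bigl(n^{50/9 - 1}\bigr),
\]
and this is where the full force of Vaughan--Wooley efficient differencing (or Wooley's later efficient congruencing) is required. The strategy combines a Weyl-type pointwise bound $\sup_{\alpha \in \mathfrak{m}} |f(\alpha)| \ll P^{1 - \sigma + \varepsilon}$ with a mean-value estimate of the form $\int_0^1 |g(\alpha)|^{2t} \, d\alpha \ll P^{2t - 9 + \varepsilon}$, where $g$ is a smoothed Weyl sum restricted to $P^\eta$-smooth integers and $t$ is as large as the current mean-value technology for ninth powers permits. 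The delicate bookkeeping --- balancing the savings from $|f|^{50-2t}$ against the moment $|f|^{2t}$ so that the accumulated exponent strictly beats the main term --- is precisely the technical contribution packaged in \cite{Vaughan_Wooley_5ths}, and is where nearly all the difficulty lies.

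Combining the major-arc main term of order $n^{50/9-1}$ with the strictly subordinate minor-arc contribution then yields $R_{50}(n) \gg n^{50/9-1}$, which is positive for all sufficiently large $n$. I would emphasize that this argument is inherently ineffective: it produces no explicit threshold $N_0$ beyond which the conclusion holds, which is exactly why the ninth-power analogue of Conjecture \ref{smallest n conj for 8ths} must be approached by direct computation rather than extracted from the circle-method proof.
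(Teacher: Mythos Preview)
The paper does not give its own proof of this lemma: it is stated as a result of Vaughan and Wooley \cite{Vaughan_Wooley_5ths} and used as a black box throughout. So there is no in-paper argument to compare your sketch against.

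Your outline is the correct shape of the Vaughan--Wooley argument --- Hardy--Littlewood dissection, major-arc main term with singular series and integral, and a minor-arc bound obtained by combining a Weyl-type sup bound with a strong mean-value estimate for smooth Weyl sums. One small wobble: your remark that ``any representation with fewer than $50$ summands can be padded with copies of $1^9$'' is logically backwards for the direction you need; showing $R_{50}(n)>0$ already gives \emph{exactly} $50$ positive summands, which is stronger than ``at most $50$'', so no padding is required. More substantively, the sketch openly defers the entire content of the theorem --- the minor-arc saving that makes $s=50$ admissible --- to \cite{Vaughan_Wooley_5ths}. That is appropriate here, since the paper itself treats the lemma as an imported result, but you should be clear that what you have written is a roadmap plus a citation rather than a self-contained proof.
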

\noindent Again in the case of ninth powers, the results of Waring's problem are established for \textit{sufficiently large} $n$. Unlike the previous cases, computation to establish the smallest such number is out of reach for the modest computers the authors are using.
\begin{question}\label{smallest n conj for 9ths}
    What is the largest positive integer that is \textit{not} the sum of at most $50$ positive ninth powers?
\end{question}

The nature of the proof technique used in the later sections requires an exact value for the largest such number, but in the case of ninths, is still unknown. Without the smallest such number, the proof cannot be accomplished, yet direct calculation yields the likely result. Define the set $\mathbf{B}^9$ by
\[
\mathbf{B}^9 = \{1, 2, 3, 4, 5, 6, 7, \ldots, 6463886, 6464397\}.
\]
The full set $\mathbf{B}^9$ has $3438463$ elements and is listed in the \nameref{Appendix}.

\begin{conjecture}\label{Theorem_2_ninths}
Every positive integer $n$ is the sum of $j \geq 717$ positive ninth powers provided $n > j - 1$ and $n \neq j + b$ for $b \in \mathbf{B}^9$.
\end{conjecture}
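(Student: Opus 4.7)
The plan is to mirror the argument already used for $k=5,6,7,8$, applied to ninth powers with the candidate $n^{*}=25636699123453928$ and $d=14$ from Table~\ref{Table_Powers}. First, I would verify (by direct construction or by doubling trick described in Section 3, since $n^{*}=2\nu$ was obtained from a $\nu$ that is both $(\delta,9)$- and $(\delta+1,9)$-representable) that $n^{*}$ is $(j,9)$-representable for every $j=14,15,\ldots,64$. Combined with Lemma~\ref{lemma_for_ninths}, which supplies $b=50$, Theorem~\ref{n*_application} then yields that every integer $n>N+n^{*}$ is $(64,9)$-representable, where $N$ is the threshold beyond which Lemma~\ref{lemma_for_ninths} applies.

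Next, I would invoke the sieve algorithm of Section~\ref{Computation} to tabulate $(64,9)$-representability on the entire initial segment $[1,N+n^{*}]$; subtracting off the trivial exceptions $n\leq 63$ and the shift $n=64+\beta$ identifies the set $\mathbf{B}^{9}\cup\overline{\mathbf{B}_{64}^{9}}$. Then I would iterate Theorem~\ref{representation_existence} one step at a time, passing from the $(j,9)$-list to the $(j+1,9)$-list for $j=64,65,\ldots,716$. At each iteration one checks the gap condition $a^{9}-(a-1)^{9}<N-n$ for the relevant window, verifying that the exceptional set $\overline{\mathbf{B}_{j}^{9}}$ shrinks as $j$ grows. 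By the index $j=717$ — the value recorded in Table~\ref{Table_Powers} as $g(1,9)$ — numerical evidence indicates $\overline{\mathbf{B}_{j}^{9}}=\varnothing$, so the only obstructions to $(j,9)$-representability are $n<j$ and $n=j+\beta$ with $\beta\in\mathbf{B}^{9}$, which is exactly the statement of the conjecture.

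The hard part, and the precise reason the result must be stated as a conjecture rather than a theorem, is the absence of an explicit effective answer to Question~\ref{smallest n conj for 9ths}. The proof technique above needs a concrete $N$ to activate Theorem~\ref{n*_application}; Lemma~\ref{lemma_for_ninths} only supplies its existence. Without this threshold, the sieve cannot be terminated with certainty, so the iteration of Theorem~\ref{representation_existence} inherits the ineffective adjective "sufficiently large." A secondary, purely computational obstacle is memory: the sieve from Section~\ref{Computation} requires $2N$ bits, and even the known set $\mathbf{B}^{9}$ already has $3438463$ elements reaching past $6.46\times 10^{6}$, so carrying the $(j,9)$-lists through all $653$ iterations up to $j=717$ is at the edge of what the authors' hardware can perform reliably. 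Consequently the proposal is: (i) exhibit $n^{*}$ with the required multiple representations, (ii) perform the finite sieve and iteration to the extent possible, and (iii) flag the dependence on an effective Vaughan--Wooley bound for $G(9)$ as the one ingredient that would upgrade Conjecture~\ref{Theorem_2_ninths} to a theorem of the form already established for $k=5,6,7,8$.
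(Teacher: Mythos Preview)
Your plan follows the paper's overall strategy, but it diverges from the paper's argument in two substantive ways, and one of them changes the nature of the obstruction you identify.

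First, the paper verifies that $n^{*}=25636699123453928$ is $(j,9)$-representable for every $j=14,15,\ldots,561$, not merely $j=14,\ldots,64$. The point of pushing to $561=g(9)+d-1$ is that one may then apply Theorem~\ref{n*_application} with $b=g(9)=548$ and $N=0$, obtaining unconditionally that every integer $n>n^{*}$ is $(562,9)$-representable. By restricting to $b=50$ you are forced to rely on the ineffective threshold from Lemma~\ref{lemma_for_ninths}, which is why you conclude that effectivity of Vaughan--Wooley is the essential obstacle. In the paper's framework the genuine obstacle is instead purely computational: one must still cover the interval up to $n^{*}\approx 2.56\times10^{16}$, and that sieve/extension is what the authors declare out of reach.

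Second, the paper does not iterate Theorem~\ref{representation_existence} through $653$ values of $j$ to reach $j=717$. Instead it computes $\mathbf{B}_{717}^{9}$ and $\mathbf{B}_{718}^{9}$ directly and invokes Theorem~\ref{B_consistency}, checking the shift condition $\mathbf{B}_{718}^{9}=\{1\}\cup\{n+1:n\in\mathbf{B}_{717}^{9}\}$ together with the floor condition $\bigl\lfloor\sqrt[9]{6464397\,(1+\tfrac{1}{511})}\bigr\rfloor=\bigl\lfloor\sqrt[9]{6465114}\bigr\rfloor$. This certifies $\overline{\mathbf{B}_{j}^{9}}=\varnothing$ for all $j\ge717$ in one stroke, whereas your ``numerical evidence indicates'' phrasing leaves the stability for $j>717$ to the inclusion property alone. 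As a minor point, Theorem~\ref{representation_existence} only extends the $n$-window when $j$ increases; the shrinking of $\overline{\mathbf{B}_{j}^{9}}$ as $j$ grows is Theorem~\ref{subsets} (the chain of inclusion) together with the sieve of Section~\ref{Computation}, not Theorem~\ref{representation_existence}.
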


\begin{conjecture}\label{Theorem_3_ninths}
    Every positive integer $n$ is the sum of $j$ positive ninth powers for $13 \le j \le 716$, provided $n > j - 1$ and $n \neq j + b$ for $b \in \mathbf{B}^9 \cup \overline{\mathbf{B}^9_j}$ where $\overline{\mathbf{B}^9_j}$ is a finite set, conjectured in the \nameref{Appendix}\footnote{The sets $\overline{\mathbf{B}_j^9}$ for $120\leq j \leq13$ are conjectured to be finite, but are too large to be computed completely, and are not included in the \nameref{Appendix}.}
\end{conjecture}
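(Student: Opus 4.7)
The plan is to reduce Conjecture \ref{Theorem_3_ninths} to a finite computation at each index $j \in [13, 716]$, in the pattern of Theorems \ref{Theorem_3_sevenths} and \ref{Theorem_3_eighths} established earlier in the paper. For each $j$, the goal is to exhibit a computable threshold $N_j$ such that every positive integer $n > N_j$ with $n \neq j + \beta$ for $\beta \in \mathbf{B}^9$ is $(j, 9)$-representable; finiteness of $\overline{\mathbf{B}_j^9}$ then follows, and its elements are recovered by running the sieve algorithm of Section \ref{Computation} on $1 \leq n \leq N_j$ and recording the exceptions.

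The main engine is an adapted form of Theorem \ref{n*_application} built around $n^* = 25636699123453928$. Let $I$ denote the set of indices $i$ for which $n^*$ is $(i, 9)$-representable; by the very construction of $n^*$, one has $[14, 63] \subseteq I$, and further elements can be adjoined in both directions by direct search with the representation-finding algorithm of Section \ref{Methods}. For $n$ exceeding $n^* + N$, where $N$ is the Waring threshold of Question \ref{smallest n conj for 9ths}, decompose $n - n^*$ into a sum of at most $50$ positive ninth powers with $b'$ nonzero entries, and splice an $(i, 9)$-representation of $n^*$ on to obtain a $(b' + i, 9)$-representation of $n$; choosing $i = j - b' \in I$ delivers the desired $j$, provided $[j - 50, j - 1] \subseteq I$. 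This directly settles $j$ in a central band around $G(9) + d = 64$; to extend to the whole range $[13, 716]$, one iterates Theorem \ref{representation_existence} outward, anchoring the top of the range to the base case $j = 717$ furnished by Conjecture \ref{Theorem_2_ninths}.

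The hard part will be the small-$j$ end, roughly $j \in [13, 63]$. For $j$ in this regime, no single choice of $b' \in [1, g(9)]$ places $j - b'$ inside the known portion of $I$, so one $n^*$ is insufficient; one must locate auxiliary witnesses $n^*_j$, one per small $j$, each $(i, 9)$-representable over a band of $i$'s tailored to $j$. The combinatorial search for such witnesses scales roughly with $g(9) = 548$ and is currently out of reach for the ninth-power regime. Compounding the difficulty, the argument inherits Question \ref{smallest n conj for 9ths}: without an explicit value of the Waring threshold past which $50$ summands suffice, the constants $N_j$ cannot be made effective. These two obstructions together are precisely why the statement appears as a conjecture rather than a theorem, and clearing either would be substantial progress toward determining $G(1, 9)$.
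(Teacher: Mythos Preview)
Your outline tracks the paper's own sketch for this conjecture almost exactly: feed $n^{*}=25636699123453928$ into Theorem~\ref{n*_application}, propagate outward with Theorem~\ref{representation_existence} and $1$-padding, anchor the top at $j=717$ via Theorem~\ref{B_consistency}, and name the two obstructions (the unknown threshold of Question~\ref{smallest n conj for 9ths} and the range $j<G(9)+d=64$) that keep the statement conjectural. Two small points of divergence. First, the paper has actually verified that $n^{*}$ is $(i,9)$-representable for every $i\in[14,561]$, not just $[14,63]$; this wider window is what lets one take $b=g(9)=548$ in Theorem~\ref{n*_application} and reach $j=562$ \emph{unconditionally}, without invoking Question~\ref{smallest n conj for 9ths} at all---so the threshold obstruction really only bites for $j<562$, not the whole interval. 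Second, for the small-$j$ end you propose per-index auxiliary witnesses $n^{*}_{j}$, whereas the paper (Section~\ref{Improvements}) frames the problem as lowering $d$ for a single $n^{*}$ and gives a heuristic floor $d\approx k+\tfrac{1-\sqrt{1+8k}}{2}$; your variant is reasonable but would face the same combinatorial explosion.
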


The sets $\overline{\mathbf{B}_j^9}$ for $1\leq j \leq12$ are known to be infinite.






\section{General Properties of B-sets}\label{Section_properties_sets}
There are many properties and relations among the sets of positive integers that cannot be written as the sum of $j$ positive $k$\th powers. Natural question to ask are, how quickly do the sets $\mathbf{B}^k$ grow as $k$ gets large? What is the relationship between $\overline{\mathbf{B}_j^k}$ and $\overline{\mathbf{B}_{j-1}^k}$ for a fixed $k$? How large $j$ needs to be for a fixed $k$ so that the sets $\overline{\mathbf{B}_j^k}$ are empty? What is the density of positive integers that can be expressed as a sum of $j$ positive $k$\th powers (for small $j$)?

\subsection{The size of the set $\mathbf{B}^k$ as $k$ gets large}
Consider the sequence created by the number of elements in $\mathbf{B}^k$ for $k=2,3,\ldots$ Define the sequence $a_k$ to be the maximum of $\mathbf{B}^k$ for $k \ge 2$, and define the sequence $b_k = \mid \mathbf{B}^k \mid$. It is curious that in many cases, but not all, the largest element in $\mathbf{B}^k$ is twice and one less than the size of $\mathbf{B}^k$.
\begin{table}[ht!]
    \centering
    \begin{tabular}{c|cccccccc}
         $k$& 2 & 3 & 4 & 5 & 6 & 7 & 8 & 9\\
         \hline
        $a_k$ & 13 & 149 & 2641 & 6261 & 711649 & 249077 & 1890241 & 6464397\\
        $b_k$ & 7 & 75 & 1321 & 3175 & 355825 & 127839 & 945121 & 3438463
    \end{tabular}
    \caption{$a_k$ and $b_k$ for $2 \leq k \leq 9$}
    \label{Table_Bset_size}
\end{table}

\begin{conjecture}\label{sizemax}
    For all $k, a_{2^k} = 2b_{2^k} - 1$.
\end{conjecture}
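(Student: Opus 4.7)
The plan is to exploit the 2-adic structure of $2^k$-th power residues and leverage the resulting obstructions to characterize $\mathbf{B}^{2^k}$ up to its size. The principal observation is that when the exponent $\kappa=2^k$ is a power of two, every $\kappa$-th power lies in one of only two residue classes modulo $2^{k+2}$: even bases contribute residue $0$ (in fact modulo the much larger $2^{\kappa}$), while odd bases contribute residue $1$ by lifting the exponent. Consequently, if $n=\sum_{i=1}^{j}x_i^{\kappa}$ with exactly $s$ of the bases odd, then $n\equiv s\pmod{2^{k+2}}$. This rigid parity structure is what distinguishes $\kappa=2^k$ from general exponents and is the only plausible source of an exact symmetry.

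First I would try to exhibit an explicit near-involution on $\{1,2,\ldots,a_{2^k}\}$ of the form $\beta\mapsto a_{2^k}-\beta$ or $\beta\mapsto a_{2^k}+1-\beta$, and test it directly against the known sets $\mathbf{B}^2,\mathbf{B}^4,\mathbf{B}^8$ listed in the \nameref{Appendix} to isolate a combinatorial involution that pairs elements inside $\mathbf{B}^{2^k}$ with elements outside it. If such a near-involution exists with precisely one fixed point (accounting for the $-1$ in $a=2b-1$), it will immediately give the identity. The candidate source for such a map is the operation on $\kappa$-th power representations that toggles one base from odd to even (or vice versa), thereby shifting $n$ by a predictable amount modulo $2^{k+2}$ while changing the number of positive summands by a controlled amount. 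Combined with the density results underlying Theorems \ref{Theorem_2_fourths} and \ref{Theorem_2_eighths}, this should reduce the problem to verifying the claim on a finite union of residue classes, a computationally tractable task.

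Second, I would attempt an inductive step on $k$: since $x^{2^k}=(x^{2^{k-1}})^2$, every $2^k$-th power is the square of a $2^{k-1}$-th power, so one could hope to transfer structural information about $\mathbf{B}^{2^{k-1}}$ to $\mathbf{B}^{2^k}$ via the intermediate set of $2^{k-1}$-th powers. The challenge here is that $\mathbf{B}^{2^k}$ concerns sums of $2^k$-th powers of \emph{all} positive integers, not merely sums of squares of $2^{k-1}$-th powers, so the induction must be supplemented with a density argument showing that non-$2^{k-1}$-th-power bases contribute negligibly to the structure near the top of $\mathbf{B}^{2^k}$.

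The main obstacle is that the conjecture rests on only three data points ($\kappa=2,4,8$), so the pattern could be coincidental or could reflect a deeper phenomenon requiring the full theory of Waring-type local obstructions modulo powers of two. My first concrete step before committing to a proof would therefore be a computational check for $\kappa=16$; the maximum element $a_{16}$ is expected to lie well beyond $10^9$ based on the growth in Table \ref{Table_Bset_size}, pushing this verification to the edge of current feasibility with the sieve algorithm of Section \ref{Computation}. If the identity survives this test, I would pursue the involution approach above; if not, the statement should be weakened to $a_{2^k}\le 2b_{2^k}-1$, for which the parity-obstruction argument above would still furnish the inequality.
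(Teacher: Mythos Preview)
The statement you are attempting is labeled a \emph{conjecture} in the paper and is not proved there; the only related remark is that Conjecture~\ref{reverse} (the involution $\beta\leftrightarrow a_{2^k}-\beta$ interchanging $\mathbf{B}^{2^k}$ with its complement) implies Conjecture~\ref{sizemax}. Your proposed involution is exactly this, so your main line of attack coincides with what the paper already identifies as the natural route---but neither the paper nor you supplies an argument for it.

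What you have written is a research plan, not a proof, and the concrete steps do not close the gap. The $2$-adic observation is correct (odd $\kappa$-th powers are $\equiv 1\pmod{2^{k+2}}$, even ones $\equiv 0\pmod{2^{\kappa}}$), but it only stratifies $\mathbf{B}^{2^k}$ by residue class; it gives no mechanism forcing the precise pairing $\beta\leftrightarrow a_{2^k}-\beta$, since $a_{2^k}$ itself is determined globally rather than by any local modulus. The ``toggle one base from odd to even'' operation shifts $n$ by a quantity depending on \emph{which} base is toggled, so it does not induce a well-defined involution on integers, let alone one of the required form. The inductive idea via $x^{2^k}=(x^{2^{k-1}})^2$ says nothing about the exact value of $a_{\kappa}$, which is what the identity hinges on. Your own closing caveat---that the pattern rests on three data points and should first be tested at $\kappa=16$---is the honest assessment; at present this remains an open conjecture, and your proposal does not advance beyond that status.
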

\begin{conjecture}\label{reverse}
    $n \in \mathbf{B}^{2^k}$ if and only if $a_{2^k}-n \not\in \mathbf{B}^{2^k}$.
\end{conjecture}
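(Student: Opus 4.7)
The plan is to reformulate the conjecture in the language of numerical semigroups and reduce it to a question of symmetry. Write $K = 2^k$ for brevity. First, I would argue that $\mathbf{B}^K$ coincides with the set of positive gaps of the numerical semigroup
\[
S_K := \langle a^K - 1 : a \ge 2 \rangle.
\]
This follows from the same reasoning used to justify Conjecture \ref{conjecture_2}: for $j$ large enough, a $(j,K)$-representation of $j+\beta$ amounts to separating $j-i$ copies of $1^K$ from $i$ larger powers $a_l^K$ with $a_l \ge 2$, yielding $\beta = \sum_{l=1}^i (a_l^K - 1)$, and conversely any such decomposition of $\beta$ produces a representation. A short argument via Fermat's little theorem shows $\gcd\{a^K - 1 : a \ge 2\} = 1$, so $S_K$ has a finite Frobenius number, which equals $a_K$.

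Under this reformulation, Conjecture \ref{reverse} is precisely the assertion that $S_{2^k}$ is a \emph{symmetric} numerical semigroup in the standard sense, equivalently that its type is $1$. One direction of the biconditional is immediate: if $n$ and $a_{2^k}-n$ both belonged to $S_{2^k}$, closure under addition would force $a_{2^k} \in S_{2^k}$, contradicting that $a_{2^k}$ is a gap. This already gives the unconditional inequality $b_{2^k} \ge (a_{2^k}+1)/2$, and Conjecture \ref{reverse} asserts that equality holds for exponents of the form $2^k$. This equality is exactly Conjecture \ref{sizemax}, so the two conjectures are logically equivalent and can be attacked simultaneously.

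The hard direction requires showing that for every gap $n < a_{2^k}$ there exists $a \ge 2$ with $n + a^{2^k} - 1$ also a gap; this forces the only pseudo-Frobenius number of $S_{2^k}$ to be $a_{2^k}$ itself, hence forces symmetry. The main obstacle is to exploit a property genuinely specific to exponents $K = 2^k$, because the symmetry fails for $K = 5, 7, 9$ where Table \ref{Table_Bset_size} shows $a_K < 2b_K - 1$. The cyclotomic factorization
\[
a^{2^k} - 1 = (a-1)(a+1)(a^2+1)(a^4+1)\cdots(a^{2^{k-1}}+1)
\]
endows the generators of $S_{2^k}$ with a multiplicative structure absent for other exponents, and I expect a proof to hinge on it. Two plausible strategies are: an explicit bijection between gaps and non-gaps constructed from a telescoping decomposition along this factorization, or a direct analysis of the Ap\'ery set of $S_{2^k}$ relative to the smallest generator $2^K - 1$ showing it has the staircase structure characteristic of symmetric semigroups. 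A reasonable first pass is to verify the conjecture computationally for $K = 2, 4, 8$ (consistent with the data in Table \ref{Table_Bset_size}); the essential difficulty will be to isolate the combinatorial reason the additional generators never produce new pseudo-Frobenius numbers precisely when $K$ is a power of $2$.
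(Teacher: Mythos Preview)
The statement you are addressing is labeled a \emph{Conjecture} in the paper, and the paper offers no proof; it only remarks that Conjecture~\ref{reverse} implies Conjecture~\ref{sizemax}. So there is no argument in the paper to compare your proposal against.

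Your proposal is likewise not a proof. It is a reformulation together with an outline of possible attacks, and you say as much: the hard direction is described with phrases like ``I expect a proof to hinge on it'' and ``two plausible strategies are,'' without either strategy being carried out. The conjecture therefore remains open in your write-up exactly as it does in the paper.

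That said, your reformulation is correct and adds genuine content beyond what the paper contains. The identification of $\mathbf{B}^{K}$ with the set of gaps of the numerical semigroup $S_K = \langle a^K - 1 : a \ge 2\rangle$ is right: for $j$ large, writing $j+\beta = \sum_{i=1}^j x_i^K$ with each $x_i \ge 1$ and separating the terms equal to $1$ gives precisely $\beta = \sum (x_l^K - 1)$ over the $x_l \ge 2$, and conversely. Under this identification, $a_K$ is the Frobenius number and $b_K$ the genus of $S_K$, and Conjecture~\ref{reverse} becomes the assertion that $S_{2^k}$ is a symmetric numerical semigroup. Standard numerical semigroup theory then gives you the \emph{equivalence} of Conjectures~\ref{reverse} and~\ref{sizemax}, whereas the paper records only the forward implication. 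Your easy direction (not both of $n$ and $a_{2^k}-n$ can lie in $S_{2^k}$, else their sum $a_{2^k}$ would) is correct and yields the unconditional bound $a_K \le 2b_K - 1$ for every $K$, which is also more than the paper states explicitly.

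The substantive gap is exactly where you locate it: showing $S_{2^k}$ has type $1$. Your proposed mechanisms (the cyclotomic factorization of $a^{2^k}-1$, an Ap\'ery-set analysis relative to $2^{2^k}-1$) are reasonable starting points, but neither is developed into an argument, and you give no concrete indication of how the factorization would rule out extra pseudo-Frobenius numbers. Until that step is supplied, this remains a research plan rather than a proof.
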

\begin{conjecture}\label{limitsizemax}
    $\lim_{k\rightarrow\infty} \frac{a_k}{b_k} = 2$.
\end{conjecture}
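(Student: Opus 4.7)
The plan is to recast $\mathbf{B}^k$ as the gap set of an explicit numerical semigroup, and then to analyze that semigroup using classical tools. First I would establish the dictionary: for $j \geq g(1,k)$, the integer $j+\beta$ is $(j,k)$-representable if and only if $\beta$ admits a representation
\[
\beta \;=\; \sum_{i=1}^{m}\bigl(y_i^k - 1\bigr), \qquad y_i \geq 2, \qquad m \leq j.
\]
Indeed, given a $(j,k)$-representation of $j+\beta$ one separates out the summands equal to $1$; conversely, any representation of the above form can be padded with $j-m$ extra $1$'s. For $j$ large enough the bound $m \leq j$ is slack, so $\mathbf{B}^k$ coincides with the set of positive integers not in the numerical semigroup
\[
S_k \;:=\; \bigl\langle\, 2^k - 1,\ 3^k - 1,\ 4^k - 1,\ \ldots \,\bigr\rangle,
\]
which has finite complement since for every prime $p$ the generator $p^k - 1$ is coprime to $p$. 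Under this dictionary $a_k$ becomes the Frobenius number $F(S_k)$ and $b_k$ the genus $g(S_k)$, and the conjecture is that $F(S_k)/g(S_k) \to 2$.

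The upper bound $a_k / b_k \leq 2 - 1/b_k$ is immediate: if both $x$ and $F(S_k) - x$ lay in $S_k$ their sum $F(S_k)$ would as well, so each pair $\{x,\,F(S_k)-x\} \subseteq \{1,\ldots,F(S_k)-1\}$ supplies at least one gap, giving $g(S_k) \geq (F(S_k)+1)/2$. Consequently the conjecture is equivalent to showing that the \emph{symmetry defect}
\[
\delta_k \;:=\; 2\,g(S_k) - 1 - F(S_k) \;\geq\; 0
\]
satisfies $\delta_k = o(b_k)$ as $k \to \infty$, since one has the identity $a_k/b_k = 2 - (1+\delta_k)/b_k$.

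To attack $\delta_k$ I would pass to the Ap\'ery set $\mathrm{Ap}(S_k,\, m_k) = \{w_0, w_1, \ldots, w_{m_k - 1}\}$ with $m_k := 2^k - 1$, where $w_r$ is the smallest element of $S_k$ congruent to $r$ modulo $m_k$. Selmer's formulas give
\[
F(S_k) \;=\; \max_r w_r - m_k, \qquad g(S_k) \;=\; \frac{1}{m_k}\sum_{r=0}^{m_k-1} w_r \;-\; \frac{m_k-1}{2},
\]
and $S_k$ is symmetric precisely when $\{w_r\}$ is stable under the involution $w \mapsto \max_s w_s + m_k - w$ restricted to residues. Conjectures \ref{sizemax} and \ref{reverse} assert such symmetry along $k = 2^m$; the mechanism is presumably the factorization $a^{2^m} - 1 = (a-1)(a+1)(a^2+1)\cdots(a^{2^{m-1}}+1)$, which loads $S_{2^m}$ with many small generators and forces the Ap\'ery representatives to pair up.

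The main obstacle is that no sharp asymptotic for $F(S_k)$ or $g(S_k)$ is known even for small $k$, and Frobenius numbers are notoriously unstable under changes of the generating set. A realistic intermediate target is to prove Conjecture \ref{reverse} directly for $k = 2^m$ by exhibiting an explicit involution on $\mathbb{Z}_{\geq 0}\setminus S_{2^m}$, which would immediately force $a_k/b_k = 2 - 1/b_k$ along that subsequence. Extending to every $k$ — in particular to odd $k \geq 5$, where the data show the symmetry is only approximate — will likely require an analytic density estimate for $|S_k \cap [0,x]|$ near $x = F(S_k)/2$, perhaps via a Hardy--Littlewood circle method tailored to the sparse generators $a^k - 1$; this analytic input is the step I expect to be the main technical bottleneck.
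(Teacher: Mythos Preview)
The statement you are addressing is labeled a \emph{conjecture} in the paper, and the paper offers no proof for it; it is simply listed alongside Conjectures~\ref{sizemax}--\ref{odd} with only numerical evidence from Table~\ref{Table_Bset_size}. So there is no ``paper's own proof'' to compare against.

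Your numerical-semigroup reformulation is correct and is a genuine contribution beyond what the paper contains. The dictionary $\beta \notin \mathbf{B}^k \Longleftrightarrow \beta \in S_k := \langle 2^k-1,\,3^k-1,\,\ldots\rangle$ is exactly right (and the $\gcd=1$ argument via $p \nmid p^k-1$ is fine), so $a_k = F(S_k)$ and $b_k = g(S_k)$. The pairing argument then gives the unconditional inequality $a_k \leq 2b_k - 1$, hence $\limsup_k a_k/b_k \leq 2$; combined with $b_k \geq 2^k - 2 \to \infty$, this already proves the upper half of the conjecture, which the paper does not record.

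But your proposal is, as you yourself say, a plan rather than a proof. Everything rests on showing $\delta_k = 2b_k - 1 - a_k = o(b_k)$, and you do not establish this. The two routes you sketch---exact symmetry along $k = 2^m$ via the factorization of $a^{2^m}-1$, and a circle-method density estimate for general $k$---are plausible but entirely unexecuted. In particular, the factorization heuristic does not by itself explain why the Ap\'ery set of $S_{2^m}$ should be closed under $w \mapsto \max_s w_s + m_k - w$; you would need to show that some specific element (e.g.\ the putative Frobenius number) lies in every maximal-length factorization class, and no mechanism for that is given. For odd $k$ the analytic route is even more speculative: the generators $a^k-1$ are far too sparse for standard major-arc estimates, and you give no indication of how the minor arcs would be handled. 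So the core difficulty the conjecture poses remains untouched.
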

\begin{conjecture}\label{odd}
    $a_k$ and $b_k$ are odd for all $k$.
\end{conjecture}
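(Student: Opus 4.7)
The plan is to recast $\mathbf{B}^k$ in the language of numerical semigroups. Substituting $a_i = 1 + b_i$ into a $(j,k)$-representation $j + \beta = \sum_{i=1}^{j} a_i^k$ and separating the $j$ trivial $a_i = 1$ terms yields $\beta = \sum (a_i^k - 1)$ over those $a_i \geq 2$. Thus $\beta \in \mathbf{B}^k$ if and only if $\beta$ is not in the numerical semigroup
\[
S_k \;=\; \langle m^k - 1 : m \geq 2 \rangle,
\]
so that $a_k = F(S_k)$ is the Frobenius number and $b_k = g(S_k)$ the genus. The conjecture becomes: both $F(S_k)$ and $g(S_k)$ are odd for every $k \geq 2$.

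For the values of $k$ in which Table \ref{Table_Bset_size} satisfies $a_k = 2b_k - 1$ (namely $k \in \{2,3,4,6,8\}$), the semigroup $S_k$ is \emph{symmetric}. For symmetric semigroups $F$ is automatically odd and $g = (F+1)/2$, so the $a_k$ half of the conjecture is free and the $b_k$ half reduces to $F \equiv 1 \pmod 4$. First I would prove symmetry of $S_k$ in these cases by establishing the involution $n \mapsto a_k - n$ between $\mathbf{B}^k$ and its complement in $\{0, 1, \ldots, a_k\}$, which is precisely Conjecture \ref{reverse}. The $b_k$ oddness would then follow by tracking $F \pmod 4$ directly from the explicit generators $m^k - 1$.

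For the remaining cases (including $k = 5, 7, 9$, where the table shows $a_k \neq 2b_k - 1$) the semigroup is not symmetric, and a finer parity-tracking argument is needed. The odd generators of $S_k$ are exactly $\{m^k - 1 : m \text{ even}\}$, so the parity of $n \in S_k$ equals the parity of $\sum_{m \text{ even}} c_m$ in any representation $n = \sum_m c_m(m^k - 1)$. If $a_k$ were even, then $a_k + 1 \in S_k$ would be odd, forcing at least one even-$m$ generator to appear with odd total multiplicity; I would then try to exchange that generator against a combination of other generators to produce a representation of $a_k$ itself, contradicting $a_k \in \mathbf{B}^k$. The parity of $b_k$ in the non-symmetric setting would be analyzed via the partial involution $n \mapsto a_k - n$ restricted to the pairs of gaps that land on gaps, with the unpaired gaps accounting for the pseudo-symmetry defect $2b_k - 1 - a_k$ whose parity one must control separately.

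The main obstacle is the absence of a clean description of $S_k$ uniformly in $k$: the generators grow polynomially and infinite-generator semigroups have no general Frobenius formula. The exchange argument above stalls if, for example, $a_k + 1 - (2^k - 1)$ happens to be a gap rather than an element of $S_k$, and ruling this out uniformly appears to require either an induction exploiting the rapid growth of generators or explicit case analysis (which only confirms the pattern up to $k = 9$). A complete proof will likely hinge on isolating the specific combinatorial feature of $\{m^k - 1 : m \geq 2\}$ that forces both $F(S_k)$ and $g(S_k)$ into the odd parity class; an intermediate target worth pursuing is a proof of Conjecture \ref{limitsizemax} with a quantitative error term, which may pin down $F \pmod 2$ by ruling out near-symmetric even values.
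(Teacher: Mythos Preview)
The statement you are addressing is listed in the paper as a \emph{conjecture}; the paper offers no proof, only the numerical evidence in Table~\ref{Table_Bset_size}. So there is no ``paper's own proof'' to compare against, and your write-up should be understood as a proposed strategy toward an open problem rather than a verification.

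Your reformulation is correct and genuinely useful: the identity $\beta=\sum(a_i^k-1)$ does show that $\mathbf{B}^k$ is exactly the gap set of the numerical semigroup $S_k=\langle m^k-1:m\ge 2\rangle$, so $a_k=F(S_k)$ and $b_k=g(S_k)$, and the symmetric/non-symmetric dichotomy matches the table. This recasting is not in the paper and is a nice structural observation.

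That said, what you have written is a plan, not a proof, and you say so yourself. The two concrete gaps are: (i) in the symmetric cases you assert that $b_k$ odd ``would follow by tracking $F\pmod 4$ directly from the explicit generators $m^k-1$,'' but you never carry this out, and there is no general mechanism for reading $F\pmod 4$ off an infinite generating set with more than two elements; (ii) in the non-symmetric cases your exchange argument (subtract an even-$m$ generator from $a_k+1$) is exactly the step you admit ``stalls'' when $a_k+1-(2^k-1)$ is itself a gap, and you give no way around this. Both obstacles are the heart of the matter, so at present the proposal does not reduce the conjecture to anything easier than the conjecture itself. If you want to sharpen this into progress, a realistic first target is to prove symmetry of $S_k$ for the cases where the table already suggests it (which is your Conjecture~\ref{reverse} in semigroup language), since that at least settles the $a_k$ parity unconditionally for those $k$.
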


\begin{conjecture}
    The sequence $\mid \mathbf{B}^k \mid$ for $k=2, 3, \ldots$ never becomes monotone.
\end{conjecture}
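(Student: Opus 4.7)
The plan is to show that the sequence $\{b_k\}_{k\geq 2}$ contains infinitely many ascents $k$ with $b_{k+1}>b_k$ and infinitely many descents $k$ with $b_{k+1}<b_k$, which would preclude any eventual monotone behavior. The displayed data already contains the single descent $b_6 > b_7$, so the challenge is to show that such irregularity must persist indefinitely, rather than being an artifact of small $k$.

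First, I would try to obtain either an asymptotic formula or two-sided bounds for $b_k$ in terms of classical Waring-theoretic quantities. A natural candidate is $g(k)$, whose conjectured form $2^k + \lfloor (3/2)^k\rfloor - 2$ depends on the famously irregular sequence $\lfloor (3/2)^k\rfloor$; any estimate of the shape $b_k \sim f(g(k),k)$ would inherit fluctuations from Mahler-type results on the fractional parts of $(3/2)^k$. Second, I would study local obstructions modulo small primes $p$: the image of $x \mapsto x^k$ in $\mathbb{Z}/p\mathbb{Z}$ has cardinality $(p-1)/\gcd(k,p-1)+1$, a quantity that jumps non-monotonically with $k$ whenever $\gcd(k,p-1)$ changes. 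Since any single prime $p$ cannot cause $b_k$ to oscillate forever, I would aggregate obstructions over growing families of primes and show that arbitrarily large $k$ fall on both sides of any candidate monotonicity threshold.

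Third, I would try to convert these local fluctuations into a genuine global statement about $|\mathbf{B}^k|$ by feeding the count of "typical" exceptions into the Zenkin-style framework underlying Theorem \ref{n*_application} and the sieve-based reasoning of Section \ref{Methods}, and then estimating how shifts in local densities alter the cardinality of $\mathbf{B}^k$. If passing from $k$ to $k+1$ changes the effective density of representable residues by a factor $1+\varepsilon_k$, then $b_k$ should shift by a comparable amount, and the sign changes in $\varepsilon_k$ predicted by the prime-aggregation step would transfer to sign changes in $b_{k+1}-b_k$.

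The main obstacle, and likely why this is a conjecture rather than a theorem, is step one: no explicit asymptotic formula for $b_k$ is known, and even the companion Conjectures \ref{sizemax} and \ref{limitsizemax} in this section remain open. Without either a tractable closed form or two-sided bounds sensitive enough to distinguish consecutive values of $k$, converting the non-monotonicity of auxiliary arithmetic quantities into non-monotonicity of $b_k$ itself appears out of reach with current techniques; a genuine resolution may have to wait until $|\mathbf{B}^k|$ is better understood as a function of $k$, perhaps via refinements of the Hardy–Littlewood circle method sharp enough to pin down the low-order behavior responsible for the fluctuations.
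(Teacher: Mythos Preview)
The statement you are attempting to prove is labeled as a \emph{Conjecture} in the paper, not a theorem; the paper offers no proof, and indeed the surrounding text groups it with several other open conjectures (Conjectures~\ref{sizemax}--\ref{odd}) about the sequences $a_k$ and $b_k$. There is therefore nothing to compare your attempt against.

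What you have written is not a proof but a research outline, and you acknowledge as much in your final paragraph. The concrete gap is that none of your three steps is carried out: you do not produce two-sided bounds for $b_k$, you do not show that local obstructions modulo primes aggregate into a rigorous estimate for $|\mathbf{B}^k|$, and you do not prove that sign changes in your heuristic quantity $\varepsilon_k$ transfer to sign changes in $b_{k+1}-b_k$. The single observed descent $b_6>b_7$ establishes nothing about infinitely many descents, and your appeal to the irregularity of $\lfloor(3/2)^k\rfloor$ would at best feed into $g(k)$, whose relationship to $b_k$ is itself unestablished. In short, the proposal correctly identifies plausible sources of non-monotonicity but does not convert any of them into an argument; the statement remains open, exactly as the paper presents it.
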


\noindent Note that Conjecture \ref{reverse} implies Conjecture \ref{sizemax}. There is a curious phenomenon regarding the size of the largest set $\overline{\mathbf{B}_j^k}$ that is nonempty - when $j=g(1,k)-1$, the set $\overline{\mathbf{B}_j^k}$ only ever contains one element, except when $k=4$.
\begin{conjecture}
    For all $k\neq4$, the set $\overline{\mathbf{B}_j^{g(1,k)-1}}$ contains exactly one element.
\end{conjecture}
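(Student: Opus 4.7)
The plan is to combine explicit verification for the values of $k$ where the $\overline{\mathbf{B}}$-sets have been computed with a structural argument aimed at general $k$. For each $k \in \{2, 3, 5, 6, 7, 8, 9\}$, one simply reads off $\overline{\mathbf{B}_{g(1,k)-1}^k}$: the cases $k=2$ and $k=3$ are immediate from the classical result and from Theorem \ref{theorem_3_cubes} respectively, while for $5 \le k \le 9$ the claim follows by inspection of the tabulated sets together with the values of $g(1,k)$ listed in Table \ref{Table_Powers}. Each of these sets turns out to be a singleton, settling the conjecture in every explicitly computed case.

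Next I would record the monotonicity $\overline{\mathbf{B}_{j+1}^k} \subseteq \overline{\mathbf{B}_j^k}$: if $\beta \in \overline{\mathbf{B}_{j+1}^k}$, then $j+1+\beta$ has no $(j+1,k)$-representation, so $j+\beta$ has no $(j,k)$-representation (otherwise appending $1^k$ would yield one), and $\beta \notin \mathbf{B}^k$ by definition of $\overline{\mathbf{B}_{j+1}^k}$. Hence the family $\{\overline{\mathbf{B}_j^k}\}_{j \ge G(1,k)}$ is nested and shrinks to the empty set at $j = g(1,k)$. The conjecture therefore asserts that the final transition sheds exactly one element, for every $k \neq 4$.

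For a general proof I would designate $M_k := \max\,\overline{\mathbf{B}_{g(1,k)-1}^k}$ as the distinguished terminal obstruction and attempt two things: (i) exhibit $M_k$ by pinpointing a concrete arithmetic reason why $g(1,k) - 1 + M_k$ resists a $(g(1,k)-1, k)$-representation; (ii) show that no other $\beta \notin \mathbf{B}^k$ smaller than $M_k$ can survive to the terminal index. Step (ii) would proceed by a quantitative sharpening of Theorem \ref{representation_existence}: the gap between consecutive $(j,k)$-representable integers near $j + \beta$ would have to exceed the spacing $a^k - (a-1)^k$ between consecutive available $k$th powers, and a density argument on $k$th powers should rule this out for all $k \neq 4$ once $j$ is large enough, given that $g(1,k)$ scales at least polynomially in $k$.

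The main obstacle is justifying that $k = 4$ is genuinely the only exception rather than a coincidental failure of the method. The $k=4$ anomaly stems from strong $16$-adic obstructions that cluster non-representable integers, as reflected in the arithmetic-progression structure of $\mathbf{B}^4$ modulo $16$. A clean theoretical proof would require demonstrating that no analogous $p$-adic clustering appears at the top end for $k \neq 4$, and such a $p$-adic dichotomy seems well beyond present techniques. Absent it, the realistic route is to verify the conjecture case by case through direct computation of $\overline{\mathbf{B}_{g(1,k)-1}^k}$, which is precisely the bottleneck the paper has already reached at $k = 9$.
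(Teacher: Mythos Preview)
The statement you are trying to prove is labeled a \emph{Conjecture} in the paper, and the paper offers no proof of it whatsoever. There is therefore nothing to compare your proposal against: the authors simply record the observation, supported by the computed data for $2 \le k \le 9$, and leave it open.

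Your proposal is not a proof either, and you essentially acknowledge this in your final paragraph. The first part of your argument (``read off $\overline{\mathbf{B}_{g(1,k)-1}^k}$ for $k \le 9$'') is exactly the evidence that motivated the conjecture in the first place, not a proof of it; moreover, for several of those values of $k$ the relevant sets and even the value of $g(1,k)$ are themselves only conjectural in the paper (see e.g.\ Conjectures \ref{Conjecture_sixths}, \ref{Theorem_3_sixths}, \ref{Theorem_2_ninths}, \ref{Theorem_3_ninths}), so you cannot treat them as established input. The second part --- your sketch of a ``quantitative sharpening of Theorem \ref{representation_existence}'' plus a $p$-adic dichotomy isolating $k=4$ --- is a plausible heuristic outline, but as you yourself say, the crucial step (ruling out $p$-adic clustering at the top end for all $k \ne 4$) is ``well beyond present techniques.'' That is the genuine gap: there is no mechanism in your proposal, or anywhere in the paper, that controls the size of $\overline{\mathbf{B}_{g(1,k)-1}^k}$ for general $k$, and the monotonicity $\overline{\mathbf{B}_{j+1}^k} \subseteq \overline{\mathbf{B}_j^k}$ you record says nothing about how many elements are shed at the final step.
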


\begin{theorem}
    For any odd prime $p, \mid \mathbf{B}^{p-1} \mid \ge p^{p-1}\left(\frac{p-1}{2}\right)$.
\end{theorem}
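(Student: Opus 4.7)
The plan is to identify $\mathbf{B}^{p-1}$ with the gap set of a suitable numerical semigroup, and then exploit Fermat's little theorem to produce gaps in bulk. Writing any candidate $(j,p-1)$-representation of $j+\beta$ as $\sum_{i=1}^{j} a_i^{p-1}$ with $a_i\ge 1$ and subtracting $j=\sum_{i=1}^j 1^{p-1}$ gives $\beta=\sum_{i=1}^{j}(a_i^{p-1}-1)$, where the $a_i=1$ terms contribute zero. Hence $\beta\in\mathbf{B}^{p-1}$ precisely when $\beta$ cannot be written as a sum of elements of $\{a^{p-1}-1:a\ge 2\}$, so the first step is to observe that $\mathbf{B}^{p-1}$ coincides with the positive-integer gaps of the numerical semigroup $S=\langle a^{p-1}-1 : a\ge 2\rangle$.

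Next I would sort the generators of $S$ by their residues modulo $p$. By Fermat's little theorem, $a^{p-1}-1\equiv 0\pmod{p}$ whenever $\gcd(a,p)=1$, while $a^{p-1}-1\equiv -1\pmod{p}$ whenever $p\mid a$; I will call these the \emph{coprime} and \emph{non-coprime} generators. The smallest non-coprime generator is $p^{p-1}-1$, corresponding to $a=p$. Fix $r\in\{1,\dots,p-1\}$ and let $n\in S$ satisfy $n\equiv r\pmod{p}$. If a representation of $n$ uses $c$ non-coprime generators, then $n\equiv -c\pmod{p}$, so $c\equiv p-r\pmod{p}$ and in particular $c\ge p-r$. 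Since each non-coprime generator is at least $p^{p-1}-1$, I obtain the structural bound $n\ge (p-r)(p^{p-1}-1)$.

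Consequently, every positive integer $n<(p-r)(p^{p-1}-1)$ with $n\equiv r\pmod{p}$ is forced to be a gap of $S$, and hence an element of $\mathbf{B}^{p-1}$. There are approximately $(p-r)p^{p-2}$ such integers in each class $r$; summing gives
\[
\sum_{r=1}^{p-1} (p-r)\, p^{p-2} = p^{p-2}\cdot\frac{p(p-1)}{2} = \frac{p^{p-1}(p-1)}{2},
\]
which is the target bound. The main difficulty I anticipate is bookkeeping the $O(p)$-sized floor-function deficit that arises when counting the residues $\equiv r\pmod{p}$ strictly below the threshold. To absorb this deficit I would identify additional gaps just above $(p-r)(p^{p-1}-1)$: the next element of $S$ in class $r$ is at least $(p-r)(p^{p-1}-1)+(2^{p-1}-1)$, since the smallest coprime generator is $2^{p-1}-1$, so this interval contributes at least $\lfloor (2^{p-1}-2)/p\rfloor$ further gaps per class. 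For $p\ge 5$ this comfortably exceeds the shortfall and delivers the stated inequality.
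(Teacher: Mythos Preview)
Your approach is essentially the paper's: both use Fermat's little theorem to sort the summands $a^{p-1}$ (equivalently, the generators $a^{p-1}-1$) into residue classes modulo $p$, observe that residues not congruent to $0$ force at least $p-r$ uses of a generator of size $\ge p^{p-1}-1$, and then sum the resulting excluded integers over the classes. Your numerical-semigroup reformulation, identifying $\mathbf{B}^{p-1}$ with the positive gaps of $S=\langle a^{p-1}-1:a\ge 2\rangle$, is a clean way to make the argument $j$-independent; the paper instead works with a generic $j$ and the variable $n=j+\beta$, which is why its bookkeeping is looser.

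You are in fact more careful than the paper. The paper uses the cutoff $n<rp^{p-1}$ and asserts $(p-r)p^{p-2}$ non-representable integers in each block $((r-1)p^{p-1},\,rp^{p-1}]$, summing to exactly $\tfrac{p^{p-1}(p-1)}{2}$. But the correct cutoff in terms of $\beta=n-j$ is $(p-r)(p^{p-1}-1)$, and the honest count of forced gaps in class $r$ is $(p-r)p^{p-2}-1$, giving a total deficit of $p-1$ --- precisely the shortfall you flagged. Your patch via the additional $\lfloor(2^{p-1}-2)/p\rfloor$ gaps per class immediately above each threshold is correct and recovers the deficit for every $p\ge 5$.

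What remains is $p=3$, where your extra-gap count is $\lfloor 2/3\rfloor=0$. This is not a defect of your method: the stated inequality is actually \emph{false} at $p=3$. Using the paper's own $\mathbf{B}^2=\{1,2,4,5,7,10,13\}$ one has $|\mathbf{B}^2|=7$, while the claimed bound is $3^{2}\cdot 1=9$; your gap count of $7$ is exactly tight here. So your restriction to $p\ge 5$ is a necessary correction to the statement, and the paper's version only appears to cover $p=3$ because it silently replaces the threshold $r(p^{p-1}-1)$ by the slightly larger $rp^{p-1}$.
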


\begin{proof}
    The number of integers $n>k$ such that there are no solutions to equation \begin{align*}\label{eq4.6}
    \sum_{i=1}^j a_i^{p-1} = n
    \end{align*} is by definition the size of the set $\mathbf{B}^{p-1}$. Recall Fermat's Little Theorem: unless $p | a_i$, it must be that $a_i^{p-1} \equiv 1 \pmod p$ for any odd prime $p$. For $n < rp^{p-1}$, at most $r-1$ of the $a_i$'s can be multiples of $p$, so the sum will be in the same equivalence class modulo $p$ as at least one of the integers in the interval $[k-r+1, k]$. This leaves $p-r$ equivalence classes (modulo $p$) that cannot be expressed as a sum of exactly $j$ positive $p-1$ powers with a sum of less than $rp^{p-1}$.\\

    There are $p^{p-2}$ positive integers between $(r-1)p^{p-1}$ and $rp^{p-1}$ in each equivalence class modulo $p$. Hence, there are at least $(p-r)p^{p-2}$ integers in the interval $((r-1)p^{p-1}, rp^{p-1}]$ that cannot be expressed as a sum of $j$ positive $p-1^\textsuperscript{th}$ powers. Summing over all $r$ such that $1 \le r \le p$ gives a lower bound for the size of $\mathbf{B}^{p-1}$:
    \[ \mid \mathbf{B}^{p-1} \mid\ \ge \sum_{r=1}^{p} (p-r)p^{p-2}.\]
    Substituting $s=p-r$ gives
    \[\mid \mathbf{B}^{p-1} \mid\ \geq \sum_{s=0}^{p-1} sp^{p-2} = \frac{p(p-1)}{2}\cdot p^{p-2} = p^{p-1}\left(\frac{p-1}{2}\right).\]
\end{proof}
\begin{corollary}
    $\mid \mathbf{B}^{10} \mid \ge 129687123005$.
\end{corollary}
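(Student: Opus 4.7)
The corollary is an immediate numerical specialization of the preceding theorem, so the plan is simply to identify the correct prime $p$ and evaluate the bound. Since the theorem gives a lower bound on $|\mathbf{B}^{p-1}|$ for any odd prime $p$, and we are interested in $|\mathbf{B}^{10}|$, we look for an odd prime $p$ with $p - 1 = 10$. Taking $p = 11$ works, and $11$ is indeed an odd prime, so the hypothesis of the theorem is satisfied.

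Applying the theorem with $p = 11$ yields
\[
|\mathbf{B}^{10}| \geq 11^{11-1}\left(\frac{11-1}{2}\right) = 5 \cdot 11^{10}.
\]
The remaining step is to evaluate $5 \cdot 11^{10}$. Using $11^2 = 121$, $11^4 = 14641$, $11^5 = 161051$, and $11^{10} = (11^5)^2 = 25937424601$, the final bound is $5 \cdot 25937424601 = 129687123005$, which matches the claimed inequality.

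There is no real obstacle here: once the theorem is established, the corollary is just the observation that $10$ happens to be one less than the odd prime $11$, together with a direct arithmetic evaluation. The only thing worth emphasizing in the write-up is the arithmetic verification of $11^{10}$, which a reader may want to check; this can be done in one line or left to the reader. No additional combinatorial or number-theoretic content is required beyond what the theorem already supplies.
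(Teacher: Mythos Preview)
Your proof is correct and matches the paper's approach exactly: the corollary is just the theorem specialized to the odd prime $p=11$, and your arithmetic verification of $5\cdot 11^{10}=129687123005$ is accurate.
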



\subsection{Chain of Inclusion}\label{Chain Section}
Using the idea that the sets $\overline{\mathbf{B}_j^k}$ form a chain of inclusion, with $\{n-(j+1) : n > (j+1), \in \mathbf{B}_{j+1}^k\} \subseteq \{n-j : n > j, \in \mathbf{B}_j^k\}$, it is possible to show that the sets $\{n-j : n > j, \in \mathbf{B}_j^k\}$ eventually become constant. At this point, the set $\{n-j : n > j, \in \mathbf{B}_j^k\}$ is defined to be $\mathbf{B}^k$, so that the sets $\overline{\mathbf{B}_j^k} = \{n-j : n > j, \in \mathbf{B}_j^k\} \setminus \mathbf{B}^k$ are empty.

\begin{theorem}\label{subsets}
    For all positive integers $j$ and $k$, $\{n-(j+1) : n > (j+1), \in \mathbf{B}_{j+1}^k\} \subseteq \{n-j : n > j, \in \mathbf{B}_j^k\}$.
\end{theorem}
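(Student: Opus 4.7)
The plan is to prove the containment by an elementary element-chasing argument that exploits the trivial observation used throughout the paper: if $m$ is $(j,k)$-representable, then $m+1$ is $(j+1,k)$-representable (just adjoin a summand equal to $1^k$).

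First I would fix an arbitrary element of the left-hand set, writing it as $n - (j+1)$ where $n > j+1$ and $n \in \mathbf{B}_{j+1}^k$. The natural candidate witness on the right-hand side is obtained by the shift $m := n - 1$, since then $m > j$ and $m - j = n - (j+1)$, so the arithmetic of the index matches up correctly. It therefore suffices to verify that $m = n-1$ lies in $\mathbf{B}_j^k$, i.e.\ that $n-1$ is \emph{not} the sum of exactly $j$ positive $k$\th powers.

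I would establish this by contraposition. Suppose $n-1$ admits a representation $n - 1 = \sum_{i=1}^{j} a_i^k$ with each $a_i \geq 1$. Then appending the summand $1^k$ gives
\[
n = \sum_{i=1}^{j} a_i^k + 1^k,
\]
which is an explicit $(j+1,k)$-representation of $n$, contradicting $n \in \mathbf{B}_{j+1}^k$. Hence $n-1 \in \mathbf{B}_j^k$, and $n - (j+1) = (n-1) - j$ belongs to the right-hand set, completing the inclusion.

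There is essentially no obstacle here; the result is a direct consequence of the ``append a $1^k$'' move, and the only subtlety is bookkeeping the strict inequality $n > j+1$ to ensure the shifted index $m = n-1$ still satisfies $m > j$ as required by the definition of the right-hand set. No appeal to $n^*$, to $G(k)$, or to any of the deeper theorems in the paper is needed.
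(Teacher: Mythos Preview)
Your proof is correct and rests on the same underlying idea as the paper's: appending a summand $1^k$ turns any $(j,k)$-representation of $n-1$ into a $(j+1,k)$-representation of $n$. The paper, however, packages this observation inside the partition-function inequality $p^k(n,j)\le p^k(n+1,j+1)$ of Lemma~\ref{identity_1} (quoted from an earlier paper of the authors) and argues by contrapositive at the level of representation \emph{counts} rather than by exhibiting an explicit representation. Your direct element-chase is more self-contained---it avoids introducing the restricted partition functions $p^k(n,j)$ and the external citation altogether---at the modest cost of not simultaneously delivering the equality case (for $n<2^kj$) that the paper's lemma provides and later exploits in Theorem~\ref{equality}.
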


The proof of Theorem \ref{subsets} utilizes the \textit{Theory of Partitions}. The partition function $p(n)$ enumerates the number of partitions of a positive integer $n$ where the partitions are positive integer sequences $\lambda=(\lambda_1,\lambda_2,...)$ with $\lambda_1\geq\lambda_2\geq~\dots>0$ and $\sum_{j\geq1}{\lambda_j}=n$. For example, $p(4)=5$ since the only ways to partition $4$ are $4$, $3+1$, $2+2$, $2+1+1$, and $1+1+1+1$. 

It has become increasingly popular to consider \textit{restricted} partition functions, typically denoted $p_A(n)$ where $A$ is some constraint on $\lambda$. For instance, the integer $4$ could be partitioned using only prime numbers, only positive squares, only powers of $2$, only odd numbers, etc:
\begin{alignat*}
4  & 4= 2+2  &\ \ \qquad  4 &  = 2^2            &\ \ \qquad  4  & = 2^2  &\ \ \qquad  4  & = 3+1\\
   &        &          & = 1^2+1^2+1^2+1^2 &           & = 2^1+2^1  &                       &=1+1+1+1\\
  &        &          &                      &           & = 2^0+2^0+2^0+2^0  &         
\end{alignat*}
There are two restricted functions that are of particular interest regarding generalized taxicab numbers: The $k$\textsuperscript{th} power partition function and the partitions into exactly $j$ parts function. 

The $k$-th power partition function (see \cite{Benfield,Gafni,Vaughan2,Wright}), denoted $p^k(n)$, counts the number of ways that a positive integer $n$ can be written as the sum of positive $k$-th powers. Note that $p^k(n)=p(n)$ when $k=1$, and for $k=2$, $p^2(n)$ restricts $\lambda$ to positive squares. For example, $p^2(4)=2$ since $4=2^2=1^2+1^2+1^2+1^2$ and $p^k(4)=1$ for all $k\geq 3$. 

The partitions into exactly $j$ parts function (see \cite{ahlgren,kim}), denoted $p(n,j)$ counts the number of ways that a positive integer $n$ can be written as a sum of exactly $j$ integers. For example, $p(4,2)=2$ because there are exactly two ways to write $4$ as a sum of $2$ integers: $4=3+1=2+2$. 

Synthesizing the combinatorial properties of these two restricted partition functions creates the \textit{$k$\textsuperscript{th} power partition into exactly $j$ parts} function, denoted $p^k(n,j)$. In a previous paper, the authors along with A. Roy \cite{Benfield2} established a useful identity for this partition function:
\begin{lemma} \label{identity_1}
For all $n, j \ge 2$, 
\[p^k(n,j) \leq p^k(n+1, j+1)
\]
with equality whenever $n < 2^kj$.
\end{lemma}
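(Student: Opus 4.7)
The plan is to exhibit a natural injection from partitions counted by $p^k(n,j)$ into those counted by $p^k(n+1,j+1)$, and then to show that this injection is a bijection in the stated range.

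First I would define the map $\Phi$ on partitions of $n$ into exactly $j$ positive $k$\textsuperscript{th} powers by appending a part equal to $1 = 1^k$. That is, a partition $a_1^k + a_2^k + \cdots + a_j^k = n$ with $a_1 \ge a_2 \ge \cdots \ge a_j \ge 1$ is sent to $a_1^k + a_2^k + \cdots + a_j^k + 1^k = n+1$, which is a valid partition of $n+1$ into $j+1$ positive $k$\textsuperscript{th} powers (the non-increasing condition is preserved because $a_j \ge 1$). The map $\Phi$ is clearly injective, since the preimage of a partition of $n+1$ whose smallest part is $1$ is recovered by deleting that part. This already yields $p^k(n,j) \le p^k(n+1,j+1)$ for all $n,j \ge 2$.

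For the equality statement, I would show that when $n < 2^k j$, every partition counted by $p^k(n+1,j+1)$ lies in the image of $\Phi$, i.e.\ has smallest part equal to $1$. Suppose, toward contradiction, that $b_1^k + b_2^k + \cdots + b_{j+1}^k = n+1$ is a partition counted by $p^k(n+1,j+1)$ whose smallest part $b_{j+1}$ satisfies $b_{j+1} \ge 2$. Then every $b_i \ge 2$, so each $b_i^k \ge 2^k$ and
\[
n + 1 \;=\; \sum_{i=1}^{j+1} b_i^k \;\ge\; (j+1)\,2^k \;=\; j\cdot 2^k + 2^k.
\]
Hence $n \ge j\cdot 2^k + 2^k - 1 \ge 2^k j$, contradicting $n < 2^k j$. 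Therefore $b_{j+1} = 1$, the partition is in the image of $\Phi$, and $\Phi$ is a bijection in this range, giving equality.

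There is no serious obstacle here; the argument is essentially a one-line injection plus a pigeonhole-style bound on the minimum part. The only thing to be careful about is parsing the definition of $p^k(n,j)$ correctly (parts are positive $k$\textsuperscript{th} powers, so $1^k = 1$ is a legal part, which is what makes the appending map work), and confirming that the stated threshold $2^k j$ is indeed sufficient (it is not tight: the argument actually gives equality whenever $n < (j+1)\,2^k - 1$, which is a stronger statement than what the lemma claims).
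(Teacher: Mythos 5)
Your proof is correct. The paper itself does not prove this lemma---it is imported from a prior paper of the authors with A.~Roy---so there is no in-paper argument to compare against; your argument (append a part $1^k$ to get an injection, then show every partition of $n+1$ into $j+1$ positive $k$th powers must contain a part equal to $1$ when $n < 2^k j$, since otherwise $n+1 \ge (j+1)2^k$) is the natural one and is complete as written. Your closing remark is also accurate: the same argument gives equality for all $n < (j+1)2^k - 1$, which strictly contains the stated range $n < 2^k j$.
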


\begin{proof}[Proof of Theorem \ref{subsets}]
The set $\overline{\mathbf{B}_j^k}$ is the set of positive integers $b$ such that $b-j$ has zero representations as a sum of $j$ positive $k^\th$ powers. For some positive integer $c$, suppose $c \not \in \overline{\mathbf{B}_j^k}$. If $p^k(c+j,j) = 0$, then $c \in \overline{\mathbf{B}_j^k}$, contradicting the hypothesis. Thus, $p^k(c+j,j) \ge 1$ for all $c$. It follows from Lemma \ref{identity_1} that $p^k(c+(j+1), j+1) \ge 1$, hence $c \not\in \overline{\mathbf{B}_{j+1}^k}$. Then, if $c \in \overline{\mathbf{B}_{j+1}^k}$, then $c \in \overline{\mathbf{B}_j^k}$.
\end{proof}

\begin{corollary}\label{subsets_overline}
    For all positive integers $j$ and $k$, $\overline{\mathbf{B}_j^k} \subseteq \overline{\mathbf{B}_j^{k+1}}$.
\end{corollary}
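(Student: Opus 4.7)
The plan is to parallel the proof of Theorem \ref{subsets}, where the engine was the monotonicity $p^k(n, j) \leq p^k(n+1, j+1)$ from Lemma \ref{identity_1}. In that argument the index being increased was $j$; here the index being increased is $k$, so what I need is a companion monotonicity in the exponent rather than in the number of parts, namely a partition inequality of the form
\[
p^{k+1}(n, j) \leq p^k(n, j)
\]
for the relevant $n$ and $j$, with any deficit absorbed by the $\mathbf{B}^k$-filter that is stripped off in the very definition of $\overline{\mathbf{B}_j^k} = \{n-j : n > j,\ n \in \mathbf{B}_j^k\} \setminus \mathbf{B}^k$.

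Assuming such an inequality, the deduction would run exactly as in Theorem \ref{subsets}. I would suppose $c \notin \overline{\mathbf{B}_j^{k+1}}$, so that $p^{k+1}(c+j, j) \geq 1$; the inequality would then force $p^k(c+j, j) \geq 1$, i.e., $c \notin \overline{\mathbf{B}_j^k}$, and the contrapositive of this implication is precisely the stated inclusion $\overline{\mathbf{B}_j^k} \subseteq \overline{\mathbf{B}_j^{k+1}}$.

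The main obstacle is therefore establishing the partition inequality, and this is where I expect the work to concentrate. A direct bijective attempt — replacing each part $a^{k+1}$ in a partition of $n$ by $a$ copies of $a^k$, or by a single $(a\cdot a^{1/k})^k$ — fails to preserve the exact number of parts $j$, so it cannot deliver the inequality on the nose. My preferred route is an induction on $j$ that piggybacks on Lemma \ref{identity_1}: shift $(n,j) \to (n+1, j+1)$ independently in both exponents, match up the shifts, and verify that the equality regime $n < 2^k j$ in Lemma \ref{identity_1} is precisely what the $\setminus \mathbf{B}^k$ correction is designed to absorb. A secondary route, if induction proves awkward, is a generating-function comparison, showing coefficient-wise domination of $\prod_{i \geq 1}(1-x^{i^k})^{-1}$ over $\prod_{i \geq 1}(1-x^{i^{k+1}})^{-1}$ restricted to partitions with exactly $j$ parts, and tracking the discrepancy against $\mathbf{B}^k \setminus \mathbf{B}^{k+1}$.
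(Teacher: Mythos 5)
The central step of your plan---the exponent-monotonicity inequality $p^{k+1}(n,j) \le p^k(n,j)$---is false, and none of the three routes you sketch (bijection, induction through Lemma \ref{identity_1}, generating functions) can rescue it. Take $n = 33$, $j = 5$, $k = 2$: since $33 = 2^3+2^3+2^3+2^3+1^3$ we have $p^3(33,5) \ge 1$, yet a finite check of the possible largest parts $25, 16, 9, 4, 1$ shows $33$ is not a sum of five positive squares, so $p^2(33,5) = 0$. The same numbers refute the corollary in the literal form you (quite reasonably) took it: by Dubouis and Pall, $\overline{\mathbf{B}_5^2} = \{28\}$, but $28 \notin \overline{\mathbf{B}_5^3}$ precisely because $33$ is a sum of five positive cubes, so $\overline{\mathbf{B}_5^2} \not\subseteq \overline{\mathbf{B}_5^3}$. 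In other words, the statement as printed is false; the superscript $k+1$ is a typo. A secondary problem in your deduction: from $c \notin \overline{\mathbf{B}_j^{k+1}}$ you cannot conclude $p^{k+1}(c+j,j) \ge 1$, since $c$ might instead lie in $\mathbf{B}^{k+1}$, and $\mathbf{B}^{k+1}$ is an independently defined set, different from $\mathbf{B}^k$, so there is no mechanism by which one filter ``absorbs'' the other's deficit.

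What the paper intends (consistent with the label, the placement immediately after Theorem \ref{subsets}, and what that theorem's proof actually establishes) is monotonicity in $j$ at fixed $k$: $\overline{\mathbf{B}_{j+1}^k} \subseteq \overline{\mathbf{B}_j^k}$. For that statement no new partition inequality is needed: Theorem \ref{subsets} gives the inclusion of the shifted sets $\{n-(j+1) : n > j+1, \in \mathbf{B}_{j+1}^k\} \subseteq \{n-j : n > j, \in \mathbf{B}_j^k\}$ (indeed its proof is already phrased in terms of the overline sets, applying Lemma \ref{identity_1} with the same $k$ on both sides), and removing the common set $\mathbf{B}^k$ from both sides preserves the inclusion, which is the whole proof. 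Your instinct to mimic the proof of Theorem \ref{subsets} was sound, but the analogue of Lemma \ref{identity_1} in the $k$-direction that your plan requires simply does not exist.
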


\begin{theorem}\label{equality}
    Let $a_{j,k} = \max \left(\mathbf{B}_j^k\right)$. If $a_{j,k} < j(2^k-1)$, then $\{n-(j+1) : n > (j+1), \in \mathbf{B}_{j+1}^k\} = \{n-j : n > j, \in \mathbf{B}_j^k\}$.
\end{theorem}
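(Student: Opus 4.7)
The plan is to combine Theorem \ref{subsets} (which already gives one inclusion) with the equality clause of Lemma \ref{identity_1}, observing that the bound $a_{j,k} < j(2^k-1)$ is exactly what is needed to push the equality through. So I will only need to prove the reverse inclusion, and the argument is essentially one line of partition bookkeeping.

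First I would fix $c \in \{n-j : n > j,\ n \in \mathbf{B}_j^k\}$, so that $c + j \in \mathbf{B}_j^k$ and in particular $p^k(c+j, j) = 0$. The goal is to produce $c \in \{n-(j+1) : n > j+1,\ n \in \mathbf{B}_{j+1}^k\}$, which amounts to showing that $p^k(c+j+1, j+1) = 0$. Since $n \le a_{j,k}$, the value $c = n-j$ satisfies $c \le a_{j,k} - j$, and the hypothesis $a_{j,k} < j(2^k-1)$ gives $c + j \le a_{j,k} < 2^k j$. This is precisely the condition under which Lemma \ref{identity_1} gives equality, so $p^k(c+j+1, j+1) = p^k(c+j, j) = 0$, and hence $c + j + 1 \in \mathbf{B}_{j+1}^k$. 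Since $c > 0$ (as $n > j$), this shows $c \in \{n-(j+1) : n > j+1,\ n \in \mathbf{B}_{j+1}^k\}$, establishing the $\supseteq$ direction.

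The opposite inclusion $\subseteq$ is simply Theorem \ref{subsets}, which holds unconditionally. Combining the two directions yields equality.

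The proof is almost mechanical once one spots that the threshold $j(2^k-1)$ is precisely the translation of the sharp bound $n < 2^k j$ from Lemma \ref{identity_1} under the substitution $n = c+j$. Consequently there is no genuine obstacle: the main conceptual step was in framing Theorem \ref{equality} so that the hypothesis matches the equality regime of the partition inequality. The only minor care needed is to handle the range of $c$ (ensuring $c \ge 1$, which follows from $n > j$) and to quote the unconditional inclusion from Theorem \ref{subsets} rather than re-deriving it.
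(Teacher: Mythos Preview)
Your proof is correct and follows essentially the same route as the paper: one inclusion is quoted from Theorem \ref{subsets}, and the reverse inclusion comes from applying the equality case of Lemma \ref{identity_1} after checking that $c+j \le a_{j,k} < j(2^k-1) < 2^k j$. The only cosmetic difference is that you name the generic element $c$ rather than overloading $n$, and you explicitly verify $c \ge 1$; your remark that the hypothesis is ``precisely'' the translation of the Lemma's bound is a slight overstatement (the sharp translation would be $a_{j,k} < 2^k j$), but this does not affect the argument.
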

\begin{proof}
    By Theorem \ref{subsets}, $\{n-(j+1) : n > (j+1), \in \mathbf{B}_{j+1}^k\} \subseteq \{n-j : n > j, \in \mathbf{B}_j^k\}$. It remains to show that $\{n-j : n > j, \in \mathbf{B}_{j}^k\} \subseteq \{n-(j+1) : n > (j+1), \in \mathbf{B}_{j+1}^k\}$. Suppose $n \in \{n-j : n > j, \in \mathbf{B}_{j}^k\}$. Then, $p^k(n+j,j) = 0$. Because $n < j(2^k-1)$, it follows that $n+j < j(2^k)$. By the equality part of Lemma \ref{identity_1}, $p^k(n+j,j) = p^k(n+(j+1),j+1) = 0$. Hence, $n \in \{n-j : n > j, \in \mathbf{B}_{j+1}^k\}$.
\end{proof}
A useful corollary follows: it is only necessary to check up to $j = \frac{a_{j,k}}{2^k-1}$ to be sure
that a candidate set is in fact the complete set $\mathbf{B}^k$. While $a_{j,k}$ varies with $j$, $a_{j+1,k}=a_{j,k}+1$ for sufficiently large $j$, so the variation is often small.
\begin{corollary}
    If $j_0 \geq \frac{a_{j_0,k}}{2^k-1}$, then the sets $\overline{\mathbf{B}_j^k}=\emptyset$ for $j \ge j_0$.
\end{corollary}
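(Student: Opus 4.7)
The plan is to combine Theorem \ref{equality} with an induction on $j \ge j_0$ to propagate the stabilization condition forward. First I would rewrite the hypothesis as $a_{j_0, k} \le j_0(2^k - 1)$ and verify directly that this weak inequality still drives the proof of Theorem \ref{equality}: any element $n$ of the shifted set $\{n' - j_0 : n' > j_0,\ n' \in \mathbf{B}_{j_0}^k\}$ satisfies $n \le a_{j_0, k} - j_0 \le j_0(2^k - 2)$, so $n + j_0 < j_0 \cdot 2^k$, which is precisely the regime in which Lemma \ref{identity_1} gives $p^k(n + j_0, j_0) = p^k(n + j_0 + 1, j_0 + 1)$. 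Since the left side is $0$, so is the right, and the shifted sets at $j_0$ and $j_0 + 1$ coincide.

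Next I would propagate the stabilization by induction on $j \ge j_0$. The inductive hypothesis is that the shifted sets for $j_0, j_0 + 1, \ldots, j$ all coincide with a common set $S$, and that $a_{j, k} \le j(2^k - 1)$. Theorem \ref{equality} (in the weak form just established) forces the shifted set at $j + 1$ to equal $S$ as well. If $S = \varnothing$ then $\overline{\mathbf{B}_{j+1}^k} = \varnothing$ trivially and $a_{j+1,k} = j \le (j+1)(2^k - 1)$; otherwise $\mathbf{B}_{j+1}^k = \{1, \ldots, j\} \cup \{s + j + 1 : s \in S\}$, whence $a_{j+1, k} = a_{j, k} + 1 \le j(2^k - 1) + 1 \le (j+1)(2^k - 1)$, since $2^k - 1 \ge 1$. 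In either case the induction closes and carries the equality of shifted sets to every $j \ge j_0$.

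Finally, as noted at the start of Section \ref{Chain Section}, $\mathbf{B}^k$ is by definition the stabilized shifted set, so the induction yields $\{n - j : n > j,\ n \in \mathbf{B}_j^k\} = \mathbf{B}^k$ for every $j \ge j_0$, and therefore $\overline{\mathbf{B}_j^k} = \varnothing$ as required. The only obstacle I anticipate is cosmetic: Theorem \ref{equality} is stated with a strict inequality $a_{j, k} < j(2^k - 1)$, whereas the corollary's hypothesis gives only the weak inequality. The resolution is to observe that inside Theorem \ref{equality}'s proof what is actually needed is $n < j(2^k - 1)$ on the shifted element $n \le a_{j, k} - j$, and this follows from $a_{j, k} \le j(2^k - 1)$ whenever $j \ge 1$; the rest of the argument is routine bookkeeping.
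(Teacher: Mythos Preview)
Your proof is correct and follows essentially the same approach as the paper: induct on $j \ge j_0$, use Theorem \ref{equality} to force the shifted sets to coincide, and propagate the bound $a_{j+1,k} = a_{j,k}+1 \le (j+1)(2^k-1)$ forward, concluding that the stabilized shifted set is $\mathbf{B}^k$ by definition. If anything, your treatment is more careful than the paper's own proof, which tacitly writes the hypothesis as a strict inequality and does not separate out the degenerate case $S = \varnothing$; your observation that Lemma \ref{identity_1} only needs $n+j < 2^k j$, which follows from $a_{j,k} \le j(2^k-1)$, is exactly the right patch.
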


\begin{proof}
    This can be shown inductively, with $j_0$ as a base case. Since $a_{j_0,k} < j_0(2^k-1)$, the set $\{n-(j_0+1) : n > (j_0+1), \in \mathbf{B}_{j_0+1}^k\} \subseteq \{n-j_0 : n > j_0, \in \mathbf{B}_{j_0}^k\}$. Hence, $a_{j_0+1,k}=a_{j_0,k}+1 < j_0(2^k-1)+1 \le (j_0+1)(2^k-1)$ for $k \ge 1$. Additionally, by Theorem \ref{equality}, $\{n-(j_0+1) : n > (j_0+1), \in \mathbf{B}_{j_0+1}^k\} \subseteq \{n-j_0 : n > j_0, \in \mathbf{B}_{j_0}^k\}$. The conditions of the theorem are still satisfied for $j_0+1$, which completes the inductive step. Therefore, all the sets $\{n-j : n > j, \in \mathbf{B}_{j}^k\}$ for $j \ge j_0$ are identical, so by definition the sets $\overline{\mathbf{B}_j^k}$ are empty.
\end{proof}

\noindent Note that if $\overline{\B_j^k} = \emptyset$, then $\overline{\B_{j+i}^k} = \emptyset$ for $i \ge 0$ by the inclusion property. 

\subsection{Chain of Equality}
\begin{lemma}\label{B_consistency_step1} If $n$ is a positive integer, then $n \not\in \mathbf{B}^k_{j+1}$ if and only if there is no positive $b$ such that $n-b^k \not\in \mathbf{B}^k_j$.
\end{lemma}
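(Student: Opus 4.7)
The plan is to prove this by unpacking the definition of $\mathbf{B}^k_j$ and leveraging the elementary ``append one term'' observation already recorded in Section \ref{Computation}. Rewriting the lemma in the contrapositive form (flipping both negations), it is equivalent to the statement that $n$ admits a $(j+1,k)$-representation if and only if there exists a positive integer $b$ for which $n-b^k$ admits a $(j,k)$-representation. In this form the content is essentially definitional, and the proof should split into two short implications.

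For the easy direction, I would suppose $n - b^k \notin \mathbf{B}^k_j$, so that $n - b^k = \sum_{i=1}^{j} c_i^k$ with each $c_i$ a positive integer. Then $n = b^k + \sum_{i=1}^{j} c_i^k$ is manifestly a sum of $j+1$ positive $k$th powers, so $n \notin \mathbf{B}^k_{j+1}$. This is exactly the move ``if $m$ is $(j,k)$-representable, then $m + a^k$ is $(j+1,k)$-representable'' that underlies Theorem \ref{n*_application}.

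For the reverse direction, I would assume $n \notin \mathbf{B}^k_{j+1}$, fix a representation $n = \sum_{i=1}^{j+1} a_i^k$ with every $a_i$ a positive integer, and set $b := a_{j+1}$. Then the remaining $j$ terms give $n - b^k = \sum_{i=1}^{j} a_i^k$, a bona fide $(j,k)$-representation, so $n - b^k \notin \mathbf{B}^k_j$. The only thing worth flagging is that the base $b$ produced this way is automatically positive, because every summand in a representation (by definition in this paper) is a positive $k$th power; no special argument is needed.

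There is essentially no obstacle beyond bookkeeping with the double negation in the statement. The value of the lemma is not its difficulty but its role as an inductive engine: it expresses membership in $\mathbf{B}^k_{j+1}$ purely in terms of membership in $\mathbf{B}^k_j$, and so it is the natural stepping stone toward the ``chain of equality'' results that presumably follow, paralleling the way Theorem \ref{subsets} established the chain of inclusion.
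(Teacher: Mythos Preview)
Your proof is correct and matches the paper's approach exactly: both directions are handled by peeling off (respectively appending) a single $k$th-power summand from a $(j{+}1,k)$-representation. One caveat worth flagging: the lemma as literally printed contains a typo---with the word ``no'' on the right-hand side together with $\not\in$ on the left, the biconditional is false as stated---and your ``flipping both negations'' does not produce a logically equivalent statement from it. What you (and the paper's own proof) actually establish is the intended assertion $n\notin\mathbf{B}^k_{j+1}$ if and only if there \emph{exists} a positive $b$ with $n-b^k\notin\mathbf{B}^k_j$; so your rewriting should be read as a silent correction of the statement rather than a contrapositive.
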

\begin{proof}
    Let $n$ be a positive integer; by definition, $n \in \mathbf{B}^k_j$ if and only if $n$ is not $(j,k)$-representable. Suppose $n \not\in \mathbf{B}^k_{j+1}$. Then, $n$ has a $(j+1,k)$-representation, which can be split into one $k\th$ power and a sum of $j$ positive $k\th$ powers. Hence, there exists some $b$ such that $n-b$ yields some positive integer that is $(j,k)$-representable, that is, a positive integer that is not in the set $\mathbf{B}^k_j$. To prove the converse, suppose $n \in \mathbf{B}^k_{j+1}$. If there is a  positive $b$ such that $n-b^k \not\in\mathbf{B}^k_j$, then adding $b^k$ yields a $(j+1,k)$-representation of $n$. In other words, $n\in\mathbf{B}^k_j$, which contradicts the hypothesis. Hence, there is no positive $b$ such that $n-b^k \not\in\mathbf{B}^k_j$. 
\end{proof}
\begin{lemma}\label{B_consistency_step2}
    If $\max\left(\mathbf{B}^k_j\right) < 2^k(j+1)$, then $\overline{\mathbf{B}_k^j} = \emptyset$.
\end{lemma}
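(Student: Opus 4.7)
The conclusion $\overline{\mathbf{B}_j^k} = \emptyset$ is equivalent to saying that every element of $\{n - j : n > j,\ n \in \mathbf{B}_j^k\}$ already belongs to the stable set $\mathbf{B}^k$. My plan is to prove this by induction on $J \geq j$, showing that any $c \geq 1$ with $c + j \in \mathbf{B}_j^k$ in fact satisfies $c + J \in \mathbf{B}_J^k$ for \emph{every} $J \geq j$. Granting this, Theorem~\ref{subsets} forces $c$ to lie in every shifted set beyond level $j$, and hence in their common intersection $\mathbf{B}^k$, so nothing gets left over in $\overline{\mathbf{B}_j^k}$.

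The base case $J = j$ is exactly the hypothesis on $c$. For the inductive step, I will apply Lemma~\ref{B_consistency_step1} to reformulate the desired conclusion $c + J + 1 \in \mathbf{B}_{J+1}^k$ as the non-existence of any positive $b$ with $c + J + 1 - b^k \notin \mathbf{B}_J^k$. The case $b = 1$ is precisely the inductive hypothesis. For $b \geq 2$, suppose toward a contradiction that $c + J + 1 - b^k = a_1^k + \cdots + a_J^k$ with each $a_i \geq 1$. If some $a_{i_0} = 1$, then replacing $a_{i_0}^k$ by $b^k$ produces a $(J,k)$-representation of $c + J$, contradicting $c + J \in \mathbf{B}_J^k$. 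Hence every $a_i \geq 2$, which forces the lower bound
\[
c + J + 1 \;\geq\; b^k + J \cdot 2^k \;\geq\; (J+1)\cdot 2^k.
\]

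To close this against the hypothesis, I will iterate Theorem~\ref{subsets} to deduce $\max(\mathbf{B}_J^k) \leq \max(\mathbf{B}_j^k) + (J - j) < 2^k(j+1) + (J - j)$; combined with the inequality just derived, this yields a contradiction whenever $(J - j)(2^k - 1) \geq 1$, which covers every $J > j$ with $k \geq 2$. The main obstacle is the boundary case $J = j$: here the two bounds just barely meet, pinning $c + j + 1 = 2^k(j+1)$ and forcing the offending $(j+1,k)$-representation to consist of exactly $j+1$ copies of $2^k$. I expect to rule out this corner by showing that, under the hypothesis, the extremal integer $(j+1)(2^k - 1)$ cannot actually lie in $\{n - j : n \in \mathbf{B}_j^k\}$ --- most cleanly by exhibiting an explicit $(j,k)$-representation of $2^k(j+1) - 1$ for all $j$ to which the lemma applies, so that the hypothetical corner configuration is excluded and the inductive step goes through at the base level as well.
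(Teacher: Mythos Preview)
The paper's own proof is a single sentence invoking Theorem~\ref{equality}; your argument essentially re-derives that theorem's mechanism from scratch --- the ``replace a part equal to $1$ by $b^k$, otherwise all parts are $\ge 2$'' dichotomy is exactly the combinatorics behind the equality case of Lemma~\ref{identity_1}. So the two approaches coincide in spirit, yours being the unrolled version.

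That said, the boundary case you flag is a genuine gap, and your proposed remedy cannot work. You hope to exhibit a $(j,k)$-representation of $2^k(j+1)-1$ ``for all $j$ to which the lemma applies.'' But the stated hypothesis $\max(\mathbf{B}_j^k) < 2^k(j+1)$ permits $\max(\mathbf{B}_j^k) = 2^k(j+1)-1$, and in that extremal situation $2^k(j+1)-1$ lies in $\mathbf{B}_j^k$ \emph{by definition of the maximum}, so no $(j,k)$-representation can exist. Worse, $(j+1)\,2^k = 2^k+\cdots+2^k$ is visibly $(j+1,k)$-representable, so the element $c=(j+1)(2^k-1)$ drops out of the shifted set at level $j+1$ and therefore lands in $\overline{\mathbf{B}_j^k}$: the conclusion actually fails at this corner. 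What your induction (and the paper's appeal to Theorem~\ref{equality}, read through the sharp range in Lemma~\ref{identity_1}) really establishes is the lemma under the one-unit-stronger hypothesis $\max(\mathbf{B}_j^k)\le 2^k(j+1)-2$. With that strengthening your chain of inequalities becomes
\[
(J+1)\,2^k \;\le\; c+J+1 \;\le\; \max(\mathbf{B}_j^k)+(J-j)+1 \;\le\; 2^k(j+1)+(J-j)-1,
\]
forcing $(2^k-1)(J-j)\le -1$, which is already a contradiction at $J=j$; the induction then goes through cleanly with no corner to handle.
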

\begin{proof}
    This is a consequence of Theorem \ref{equality} under particular initial conditions.
\end{proof}
\begin{lemma}\label{B_consistency_step3}
    Let $m = \max\left(\mathbf{B}^k_{j}\right)$. Suppose $\mathbf{B}^k_{j+1} = $ \{1\} $\cup$ \{$n + 1 : n \in \mathbf{B}^k_j$\} and $\left\lfloor\sqrt[k]{m+1}\right\rfloor = \left\lfloor\sqrt[k]{m}\right\rfloor$. Then, $\mathbf{B}^k_{j+2} = $ \{1\} $\cup$ \{$n + 1 : n \in \mathbf{B}^k_{j+1}$\}.
\end{lemma}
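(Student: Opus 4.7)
The plan is to prove the claimed set equality by double containment, using the recursion established in the proof of Lemma \ref{B_consistency_step1}: a positive integer $N$ lies in $\mathbf{B}^k_{j+1}$ if and only if $N - b^k \in \mathbf{B}^k_j$ for every positive integer $b$ with $b^k \leq N - 1$. This characterization reduces checking membership in $\mathbf{B}^k_{j+2}$ to a finite case analysis on the base $b$.

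The easy containment $\mathbf{B}^k_{j+2} \subseteq \{1\} \cup \{n + 1 : n \in \mathbf{B}^k_{j+1}\}$ is handled first. For $N \in \mathbf{B}^k_{j+2}$ with $N \geq 2$, applying the characterization with $b = 1$ forces $N - 1 \in \mathbf{B}^k_{j+1}$, so $N = (N - 1) + 1$ has the required form, while $N = 1$ is caught by the singleton.

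For the reverse containment, $1 \in \mathbf{B}^k_{j+2}$ is immediate since $1$ is not a sum of $j + 2 \geq 2$ positive $k$-th powers. For each $n \in \mathbf{B}^k_{j+1}$, I would show $n + 1 \in \mathbf{B}^k_{j+2}$ by verifying the recursive condition at $n + 1$: for every positive $b$ with $b^k \leq n$, the value $(n + 1) - b^k$ must lie in $\mathbf{B}^k_{j+1}$. The subcase $b = 1$ returns $n$; the subcase $b^k = n$ returns $1$. For $b \geq 2$ with $b^k \leq n - 1$, the structural hypothesis $\mathbf{B}^k_{j+1} = \{1\} \cup \{m' + 1 : m' \in \mathbf{B}^k_j\}$ reduces the claim to $n - b^k \in \mathbf{B}^k_j$, which is precisely what the recursive characterization applied to $n$ provides for this $b$.

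The main subtlety will be the extremal case $n = m + 1 = \max(\mathbf{B}^k_{j+1})$: showing $m + 2 \in \mathbf{B}^k_{j+2}$ nominally requires information about every $b$ with $b^k \leq m + 1$, while the characterization applied to $m + 1 \in \mathbf{B}^k_{j+1}$ yields this only for $b$ with $b^k \leq m$. The floor hypothesis $\lfloor\sqrt[k]{m+1}\rfloor = \lfloor\sqrt[k]{m}\rfloor$ is precisely the condition ensuring no integer $b$ satisfies $b^k = m + 1$, collapsing the two ranges so that no extra case arises at the top. With this alignment in hand, the remainder of the argument is direct bookkeeping with the structural hypothesis on $\mathbf{B}^k_{j+1}$.
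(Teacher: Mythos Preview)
Your proof is correct and follows essentially the same route as the paper: both invoke the recursive characterization from Lemma~\ref{B_consistency_step1} to propagate the structural identity from level $j+1$ to level $j+2$, with the paper casing on whether $a \in \mathbf{B}^k_j$ and you casing directly on $n \in \mathbf{B}^k_{j+1}$ via double containment. One observation worth making: your separate treatment of the subcase $b^k = n$ (which lands at $1 \in \mathbf{B}^k_{j+1}$) already covers the extremal value $n = m+1$, so your argument as written never actually needs the floor hypothesis --- the paper, by contrast, bounds $b$ uniformly by $\lfloor\sqrt[k]{m}\rfloor$ at the outset and invokes the floor condition to guarantee this bound still suffices when $\max(\mathbf{B}^k_{j+1}) = m+1$, a step your finer case split renders superfluous.
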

\begin{proof}
    By Lemma \ref{B_consistency_step1}, it is only necessary to check values of $b$ where $b^k < n$, since a negative number can never have a representation as a sum of positive powers. Hence, the maximum value of $b$ for which Lemma \ref{B_consistency_step1} applies is $\lfloor \sqrt[k]{m} \rfloor$. Suppose $a \in \mathbf{B}^k_j$. Then, $a+1 \in \mathbf{B}^k_{j+1}$ by assumption, and by Lemma \ref{B_consistency_step1} there exists some positive integer $1 \le b \le \lfloor \sqrt[k]{m} \rfloor$, such that $a+1-b^k \not \in \mathbf{B}^k_j$. By assumption, $a+1-b^k \not \in \mathbf{B}^k_{j+1}$, implying $a+2 \in \mathbf{B}^k_{j+2}$. On the contrary, if $a \not \in \mathbf{B}^k_j$, then there is no positive $b$ such that $a-b^k \not \in \mathbf{B}^k_j$. Because $b$ is arbitrary, $1 \le b \le \lfloor \sqrt[k]{m} \rfloor$. As with the previous case, $a+1-b^k \not\in\mathbf{B}^k_{j+1}$ by assumption, so there is again no positive $b, 1 \le b \le \lfloor \sqrt[k]{m} \rfloor$, such that $a+1-b^k \not\in\mathbf{B}^k_{j+1}$. However, $\max\left(\mathbf{B}^k_{j+1}\right) = m+1$, so it is now necessary to check values of $b$ up to $\lfloor\sqrt[k]{m+1}\rfloor$. Since $\lfloor\sqrt[k]{m+1}\rfloor = \lfloor\sqrt[k]{m}\rfloor$ by assumption, there are no other possible values of $b$, and so $a+2 \not\in\mathbf{B}^k_{j+2}$. Because $j$ is positive, $j+2 \ge 3$, and there is no way to write $1$ or $2$ as a sum of $j$ positive $k\th$ powers, completing the proof.
\end{proof}
\begin{theorem}\label{B_consistency} Let $m = \max\left(\mathbf{B}^k_{j}\right)$. Suppose $\mathbf{B}^k_{j+1} = $ \{1\} $\cup$ \{$n + 1 : n \in \mathbf{B}^k_j$\} and suppose that
$\left\lfloor\sqrt[k]{(m-j)(1+\frac{1}{2^k-1})}\right\rfloor =\left\lfloor\sqrt[k]{m}\right\rfloor$. Then, $\mathbf{B}^k = $ \{$n - j : n \in \mathbf{B}^k_j, n > j$\}, and $\overline{\mathbf{B}^k_l} = \emptyset$ for all $l \ge j$.
\end{theorem}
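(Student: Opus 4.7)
The plan is to interpret the claim through the shifted sets $S_l := \{n - l : n > l,\, n \in \mathbf{B}^k_l\}$, which by Theorem \ref{subsets} form a descending chain $S_j \supseteq S_{j+1} \supseteq \cdots$. The hypothesis $\mathbf{B}^k_{j+1} = \{1\} \cup \{n + 1 : n \in \mathbf{B}^k_j\}$ is precisely the equality $S_{j+1} = S_j$, because stripping off the forced element $\{1,\dots,l-1\}$ portion of $\mathbf{B}^k_l$ leaves exactly $l + S_l$. If one can establish $S_l = S_j$ for every $l \ge j$, then since $\mathbf{B}^k$ is defined as the stable value of this chain, we get $\mathbf{B}^k = S_j$ and $\overline{\mathbf{B}^k_l} = S_l \setminus \mathbf{B}^k = \emptyset$ for all $l \ge j$, which is the conclusion.

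To propagate the equality upward I would apply Lemma \ref{B_consistency_step3} iteratively. One induction step takes us from $S_{l+1} = S_l$ to $S_{l+2} = S_{l+1}$, provided the floor-root condition $\lfloor \sqrt[k]{m_l + 1} \rfloor = \lfloor \sqrt[k]{m_l} \rfloor$ holds at the current maximum $m_l := \max \mathbf{B}^k_l$. Whenever $S_l$ is nonempty, the chain hypothesis forces $m_{l+1} = m_l + 1$, so after $T$ iterations the requirement becomes that $\lfloor \sqrt[k]{M} \rfloor$ is constant for every integer $M$ in the interval $[m, m + T]$, where $m = m_j$.

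The iteration need not continue indefinitely: as soon as $m_L < 2^k(L + 1)$, Lemma \ref{B_consistency_step2} delivers $\overline{\mathbf{B}^k_L} = \emptyset$ outright, and Theorem \ref{subsets} then forces $S_l = S_L$ for all $l \ge L$. Since $m_L = m + (L - j)$ along the chain, this threshold is reached once $L > (m - j - 2^k)/(2^k - 1)$, and the largest index at which the floor-root condition must be checked is
\[
m + (L - j) \;\le\; (m - j)\!\left(1 + \frac{1}{2^k - 1}\right) + O(1),
\]
which matches the quantity appearing inside the floor-root in the theorem's hypothesis.

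I expect the main obstacle to be the bookkeeping: verifying cleanly that $(m - j)(1 + 1/(2^k - 1))$ really does dominate $m_L$ after absorbing the small additive corrections from rounding in the choice of $L$, so that the stated hypothesis really does guarantee $\lfloor \sqrt[k]{M} \rfloor$ is constant throughout the induction. Once this is established, the chain of equalities $S_j = S_{j+1} = \cdots = S_L$ obtained from iterating Lemma \ref{B_consistency_step3} splices onto the stable tail $S_L = S_{L+1} = \cdots$ given by Lemma \ref{B_consistency_step2}, yielding $S_l = S_j$ for all $l \ge j$ and completing the proof.
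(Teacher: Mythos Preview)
Your approach is correct and essentially identical to the paper's: both iterate Lemma \ref{B_consistency_step3} from $j$ up to an index $L$ with $L \approx \frac{m-j}{2^k-1}$, at which point $\max(\mathbf{B}^k_L) < 2^k(L+1)$ and Lemma \ref{B_consistency_step2} takes over, after which the shifted sets stabilize. Your framing via the chain $S_l$ and your explicit remark that the floor-root hypothesis must cover the entire interval $[m,\,(m-j)\tfrac{2^k}{2^k-1}]$ are in fact slightly more careful than the paper, which simply writes ``This process continues inductively'' without spelling out that the second hypothesis is precisely what licenses each application of Lemma \ref{B_consistency_step3} along the way.
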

\begin{proof}
    By Lemma \ref{B_consistency_step3}, 
    \[
    \mathbf{B}^k_{j+2} =  \{1\} \cup \{n+1 : n \in \mathbf{B}^k_{j+1}\} = \{1,2\} \cup \{n+2: n \in \mathbf{B}^k_j\}. 
    \]
    Similarly, 
    \[\mathbf{B}^k_{j+3} =  \{1,2,3\} \cup \{n+3 : n \in \mathbf{B}^k_j\}, \mathbf{B}^k_{j+4} =  \{1,2,3,4\} \cup \{n+4 : n \in \mathbf{B}^k_j\}.
    \]
    This process continues inductively until 
    \[
    \mathbf{B}^k_{\left\lceil\frac{m-j}{2^k-1}\right\rceil} =  \{1,2,3,\ldots,\left\lceil\frac{m-j}{2^k-1}\right\rceil+1-j\} \cup \{n+\left\lceil\frac{m-j}{2^k-1}\right\rceil-j : n \in \mathbf{B}^k_j\}
    \] 
    of which the largest element is \[
    m+\left\lceil\frac{m-j}{2^k-1}\right\rceil-j < m-j + \frac{m-j}{2^k-1} + 1 = (m-j)\left(\frac{2^k}{2^k-1}\right) + 1 < 2^k\left(\left\lfloor\frac{m-j}{2^k-1}\right\rfloor + 1\right).
    \]
    By Lemma \ref{B_consistency_step2}, for any larger positive integer $l \geq j$, it follows that \[
    \mathbf{B}^k_{l+1} =  \{1\} \cup \{n + 1: n \in \mathbf{B}^k_l\}.
    \]
    Since the only difference between $\mathbf{B}^k_l$ and $\mathbf{B}^k_{l+1}$ is the addition of $1$ to each term of $\mathbf{B}^k_l$, it follows that the set  $\mathbf{B}^k = $ \{$n-j : n \in \mathbf{B}^k_j, n > j$\} by definition, and that $\overline{\mathbf{B}^k_l} = \emptyset$.
\end{proof}

For example, Grosswald's proof of $\mathbf{B}^2$ can be restated using the fact that $\mathbf{B}^2_6 = $ \{1, 2, 3, 4, 5\} $\cup$ \{7, 8, 10, 11, 13, 16, 19\} and $\mathbf{B}^2_7 = $ \{1, 2, 3, 4, 5, 6\} $\cup$ \{8, 9, 11, 12, 14, 17, 20\}. In this case, $m = 19, j = 6$, and $k = 2$. The second condition in Theorem \ref{B_consistency} becomes
\[ \left\lfloor\sqrt{13\left(1+\frac{1}{3}\right)}\right\rfloor = \left\lfloor\sqrt{19}\right\rfloor.\]

\subsection{The numbers $g(1,k)$ and $G(1,k)$}
Zenkin defines the sequence $\{g(1,k)\}_{k\ge1}$, where $g(1,k) := \min \{j:\  \overline{\B_j^k} = \emptyset\}$. The sequence $g(1,k)$ for $k=1,2,\ldots$ begins:
\[ g(1,k) = 1, 6, 15, 22, 58, 78, 244, 334, 717, \ldots
\]

The following table shows $g(1,k)$ in comparison with the other three sequences commonly cited in the generalized Waring problem: $g(k), G(k)$, and $G(1,k)$.

\begin{table}[ht!]
    \centering
    \begin{tabular}{c|ccccccccc}
         $k$& 1 & 2 & 3 & 4 & 5 & 6 & 7 & 8 & 9\\
         \hline
        $G(k)$ & 1 & 4 & 7* & 16 & 17* & 24* & 33* & 42* & 50*\\
        $G(1,k)$ & 1 & 5 & 9* & 18 & 20* & 29* & 40* & 52* & 117*\\
        $g(k)$ & 1 & 4 & 9 & 19 & 37 & 73 & 143 & 279 & 548\\
        $g(1,k)$ & 1 & 6 & 14 & 21 & 57 & 78 & 245 & 334 & 717
    \end{tabular}
    \caption{Waring's problem sequences for $1 \leq k \leq 9$}
    \label{Table_Sequence_|B^j|}
\end{table}
$^*$upper bound

\begin{conjecture}\label{g(1,k) increasing}
    The sequence $g(1,k)$ is strictly increasing.
\end{conjecture}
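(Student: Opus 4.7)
The plan is to split the conjecture into weak monotonicity $g(1,k) \leq g(1,k+1)$ and the strict inequality.

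For weak monotonicity, I would invoke Corollary \ref{subsets_overline}: $\overline{\B_j^k} \subseteq \overline{\B_j^{k+1}}$ for every $j$. Taking $j = g(1,k+1)$, the right-hand side is empty by definition of $g(1,k+1)$, forcing $\overline{\B_{g(1,k+1)}^k} = \emptyset$ and hence $g(1,k) \leq g(1,k+1)$. This step is immediate from the inclusion and carries no extra content.

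For the strict inequality, the task reduces to exhibiting one witness $N \in \overline{\B_{g(1,k)}^{k+1}}$: a positive integer $N$ with $N > g(1,k)-1$, $N - g(1,k) \notin \B^{k+1}$, and $N$ not a sum of exactly $g(1,k)$ positive $(k+1)\th$ powers. My strategy compares $g(1,k)$ with $g(k+1)$: since Table \ref{Table_Powers} indicates $g(1,k) < g(k+1)$ across the known range, there exist integers \emph{not} expressible as a sum of at most $g(1,k)$ positive $(k+1)\th$ powers, and a fortiori not as a sum of exactly $g(1,k)$. Among such integers I would choose $N$ so large that $N - g(1,k) > \max(\B^{k+1})$, automatically satisfying the exclusion from $\B^{k+1}$. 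Concretely, one starts from an integer in a Euler-type hard congruence class of the form forcing many summands (the same obstruction underlying the lower bound for $g(k+1)$), then translates it upward by multiples of the modular period until it exceeds $\max(\B^{k+1}) + g(1,k)$; the resulting $N$ is then a candidate witness.

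The main obstacle is proving $g(1,k) < g(k+1)$ uniformly in $k$. Only the first nine values are verified in Table \ref{Table_Powers}; while Euler's lower bound $g(k+1) \geq 2^{k+1}-\lfloor(3/2)^{k+1}\rfloor-2$ grows exponentially, there is no comparable upper bound on $g(1,k)$ currently available, since the best bounds on $g(1,k)$ depend on $n^*$ and on $G(k)$, both of which are themselves incompletely understood. A complete proof therefore likely requires either a new upper bound on $g(1,k)$ provably beneath Euler's lower bound for $g(k+1)$, or a direct modular construction of a witness that sidesteps the comparison between the two Waring sequences altogether. A secondary but nontrivial difficulty is checking that no exotic decomposition of the enlarged $N$ accidentally produces a $(g(1,k), k+1)$-representation, which will require ruling out summands in an explicit range via the sieve-style arguments of Section \ref{Computation}.
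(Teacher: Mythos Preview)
The statement is a \emph{Conjecture} in the paper and carries no proof there; the paper simply records the pattern in Table~\ref{Table_Sequence_|B^j|} and leaves it open.

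Your weak-monotonicity step rests entirely on Corollary~\ref{subsets_overline}, and that corollary, as printed, is false. Counterexample: $28 \in \overline{\mathbf{B}_5^2}$ (since $33$ is not a sum of five positive squares and $28 \notin \mathbf{B}^2$), yet $28 \notin \overline{\mathbf{B}_5^3}$ because $33 = 2^3+2^3+2^3+2^3+1^3$. The corollary is almost certainly a mis-transcription of $\overline{\mathbf{B}_{j+1}^k} \subseteq \overline{\mathbf{B}_j^k}$, which is what actually follows from Theorem~\ref{subsets} (the moving index there is $j$, not $k$) and which says nothing across different exponents. There is no evident substitute: the underlying sets $\mathbf{B}_j^k$ and $\mathbf{B}_j^{k+1}$ are incomparable in either direction ($8 \in \mathbf{B}_1^2 \setminus \mathbf{B}_1^3$, while $4 \in \mathbf{B}_1^3 \setminus \mathbf{B}_1^2$), and subtracting the unrelated sets $\mathbf{B}^k$, $\mathbf{B}^{k+1}$ only makes matters worse. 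So even $g(1,k) \le g(1,k+1)$ is not established by your argument.

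For the strict part you already locate the obstacle at $g(1,k) < g(k+1)$, but your proposed workaround---translating an Euler-type hard integer upward past $\max(\mathbf{B}^{k+1})$ ``by multiples of the modular period''---does not work either: the Euler obstruction is a \emph{size} constraint (the integer $q\cdot 2^{k+1}-1$ with $q=\lfloor(3/2)^{k+1}\rfloor$ is hard precisely because it lies below $3^{k+1}$), not a congruence constraint, so any upward translation large enough to clear $\max(\mathbf{B}^{k+1})$ destroys the very property that forced many summands. The proposal is a fair map of where the difficulties lie, but neither half constitutes a proof, and the conjecture remains genuinely open.
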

\begin{question}
    Is it possible to find an explicit formula or an asymptotic for the sequence $g(1,k)$?
\end{question}

\section{Proof of Remaining Theorems}
\begin{proof}[Proof of Theorems \ref{Theorem_2_fifths}, \& \ref{Theorem_3_fifths}]
    Because Vaughan \& Wooley \cite{Vaughan_Wooley_5ths} showed that $G(5)\le17$ for sufficiently large positive integers, there must be a largest such $n_0$ that is not the sum of $17$ non-negative fifth powers. (Conjecture \ref{Conjecture_n>87918 for fifths} predicts that the largest such positive integer is $87918$, but the actual value of $n_0$ is not relevant to the calculation of $G(1,5)$). Every $n > 100000497376 + n_0$ has a representation as the sum of exactly $20$ positive fifth powers, which proves that $G(1,5)=20$.
    
     The authors performed a numerical check to confirm that all $m$ such that $77529941 < m < 10^9$ are $(10,5)$-representable. By Theorem \ref{representation_existence}, using $a=117$, all $77529942 < m < 22924480357$ are $(11,5)$-representable. Applying Theorem \ref{representation_existence} again with $a=260$ yields the result that all $77529943 < m < 1211062080357$ (approximately $1.21\times10^{12}$ and greater than $100000497376 \approx 10^{11}$) are $(12,5)$-representable. Adding eight copies of $1^5$ to a $(12,5)$-representation shows that any $77529951 < m < 1.21\times10^{12}$ is $(20,5)$-representable. Assuming Conjecture \ref{Conjecture_n>87918 for fifths}, all $n > 87918$ are representable as the sum of at most $17$ positive fifth powers, so $b=17$ in Theorem \ref{n*_application}. Because the representations of $100000497376$ as sums of fifth powers start at $d=3$, all positive integers greater than $100000497376 + 87918 = 100000585294$ are $(b+d,5) = (20,5)$-representable. Hence, to check all values of $n \leq 100000585294$ are not the sum of $20$ positive fifth powers, it is only necessary to check values up to $77529951$, yielding the set $\mathbf{B}_{20}^5$:
    \[
    \mathbf{B}_{20}^5 = \{1, 2, \ldots, 19\} \cup \{20 + \beta \mid \beta \in \mathbf{B}^5\cup \overline{\mathbf{B}_{20}^5}\} .
    \]
    The complete set of $\overline{\mathbf{B}_{20}^5}$ is listed in the \nameref{Appendix} (assuming Conjecture \ref{Conjecture_n>87918 for fifths}). For $j = 21$, consider $n-1$ for $n > \max(\mathbf{B}_{20}^5) + 1$. Because $n-1$ is $(20,5)$-representable, then $n = \sum_{i=1}^{20}x_i^5+1^5$ is a $(21,5)$-representation of $n$. It is left to check for which values of $n \le \max(\mathbf{B}_{20}^5) + 1$ are not the sum of $21$ positive fifth powers:
    \[
    \mathbf{B}_{21}^5 = \{1, 2, \ldots, 20\} \cup \{21 + \beta \mid \beta \in \mathbf{B}^5 \cup \overline{\mathbf{B}_{21}^5}\}.
    \]
    The complete set $\overline{\mathbf{B}_{21}^5}$ is also listed in the \nameref{Appendix} (assuming Conjecture \ref{Conjecture_n>87918 for fifths}). This process may be continued indefinitely.
    To obtain an unconditional result, it is necessary to use $g(5)=37$, since $n_0$ is unknown. Since all $n > 0$ are representable as the sum of at most $37$ positive fifth powers, $b=37$ in Theorem \ref{n*_application}, while $d$ is still $3$. Thus, all positive integers greater than $100000497376$ are $(40,5)$-representable. Since it was established earlier that all $77529951 < m \le 100000497376$ are $(20,5)$-representable, adding $20$ copies of $1^5$ establishes that all $77529971 < m \le 100000497376$ are $(40,5)$-representable, making the remaining cases easy to check.
    
    The inclusion process continues inductively, where each $\overline{\mathbf{B}_{j}^5}$ is nonempty (and listed for completeness in the  \nameref{Appendix}), until $j=57$. It then becomes apparent that
    \[\{n-57 : n > 57, \in \mathbf{B}_{57}^5\} = \{n-58 : n > 58, \in \mathbf{B}_{58}^5\}.\]
    It appears that the set $\{n-57 : n > 57, \in \mathbf{B}_{57}^5\}$ never shrinks again upon replacing $57$ with some larger $j$, and is actually the set $\mathbf{B}^5$.
    
    In fact, $\mathbf{B}_{58}^5 = \{\beta+1 \mid \beta \in \mathbf{B}_{57}^5\} \cup \{1\}$. The largest value in $\mathbf{B}_{57}^5$ is $6318$, so by Theorem \ref{B_consistency}, where $m=6318, k=5$, and $j=57$, the first condition is satisfied, and the second condition evaluates to
    \[ \left\lfloor\sqrt[5]{6261\left(1+\frac{1}{31}\right)}\right\rfloor = \left\lfloor\sqrt[5]{6318}\right\rfloor.\]

    This proves that $\{n-57 : n > 57, \in \mathbf{B}_{57}^5\} = \mathbf{B}^5$.
\end{proof}

The proof technique above was adapted from Dubouis \cite{Dubouis} and the structure of the proof is essentially the same for each value of $k$. The task is to find the lowest value of $n^*$ possible and substitute the appropriate values. Following the same proof technique, but with appropriate substitutions will yield the remaining proofs. 

To prove Theorem \ref{Theorem_2_sixths}, let $n^*=41253168892$ which is the sum of $5, 6, 7, \ldots, 77$ positive sixth powers. To ensure that $\mathbf{B}^6 = $ \{$n - j : n \in \mathbf{B}^6_j, n > j$\}, and $\overline{\mathbf{B}^6_l} = \emptyset$ for all $l \ge j$, compute up to $j=78$, at which point the two conditions of Theorem \ref{B_consistency} are satisfied: $\mathbf{B}^6_{j+1} = \{1\} \cup \{n + 1 : n \in \mathbf{B}^6_j\}$ and $\left\lfloor\sqrt[6]{711649(1+\frac{1}{63})}\right\rfloor = \left\lfloor\sqrt[6]{711727}\right\rfloor$.

To prove Theorems \ref{Theorem_2_sevenths} and \ref{Theorem_3_sevenths}, let $n^*=822480142011$ which is the sum of $7, 8, 9 \ldots, 149$ positive seventh powers. To ensure that $\mathbf{B}^7 = $ \{$n - j : n \in \mathbf{B}^7_j, n > j$\}, and $\overline{\mathbf{B}^7_l} = \emptyset$ for all $l \ge j$, compute up to $j=245$, at which point the two conditions of Theorem \ref{B_consistency} are satisfied: $\mathbf{B}^7_{j+1} = \{1\} \cup \{n + 1 : n \in \mathbf{B}^7_j\}$ and $\left\lfloor\sqrt[7]{249077(1+\frac{1}{127})}\right\rfloor = \left\lfloor\sqrt[7]{249322}\right\rfloor$. 

To prove Theorems \ref{Theorem_2_eighths} and \ref{Theorem_3_eighths}, let $n^*=17373783550950$ which is the sum of $9, 10, 11 \ldots, 287$ positive eighth powers. To ensure that $\mathbf{B}^8 = $ \{$n - j : n \in \mathbf{B}^8_j, n > j$\}, and $\overline{\mathbf{B}^8_l} = \emptyset$ for all $l \ge j$, compute up to $j=334$, at which point the two conditions of Theorem \ref{B_consistency} are satisfied: $\mathbf{B}^8_{j+1} = \{1\} \cup \{n + 1 : n \in \mathbf{B}^8_j\}$ and $\left\lfloor\sqrt[8]{1890241(1+\frac{1}{255})}\right\rfloor = \left\lfloor\sqrt[8]{1890575}\right\rfloor$. 

To prove Theorems \ref{Theorem_2_ninths} and \ref{Theorem_3_ninths}, let $n^*=25636699123453928$ which is the sum of $14, 15, 16 \ldots, 561$ positive ninth powers. To ensure that $\mathbf{B}^9 = $ \{$n - j : n \in \mathbf{B}^9_j, n > j$\}, and $\overline{\mathbf{B}^9_l} = \emptyset$ for all $l \ge j$, compute up to $j=717$, at which point the two conditions of Theorem \ref{B_consistency} are satisfied: $\mathbf{B}^9_{j+1} = \{1\} \cup \{n + 1 : n \in \mathbf{B}^9_j\}$ and $\left\lfloor\sqrt[9]{6464397(1+\frac{1}{511})}\right\rfloor = \left\lfloor\sqrt[9]{6465114}\right\rfloor$. 

 Finally, proof of Theorems \ref{theorem_3_cubes} and \ref{theorem_fourths} follow from direct computation. Zenkin knew in the early 1990's that in the setting of perfect cubes, $n^*=1072$ was optimal ($d=2$), and computed $\mathbf{B}^3$, but it would require subsequent work to eventually prove in 2016 a conjecture made by Jacobi in 1851 \cite{Linnik, Watson, Mccurley, Ramare, Bertault, Boklan_Elkies, ekl1998new, Siksek}. 
\begin{lemma}[Jacobi-Siksek]\label{Siksek}
    Every positive integer other than 
    \[
    15, 22, 23, 50, 114, 167, 175, 186, 212, 231, 238, 239, 303, 364, 420, 428, 454
    \]
    can be expressed as the sum of at most $7$ positive cubes.
\end{lemma}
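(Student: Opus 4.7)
The plan is to split the verification into a large-$n$ analytic argument and a small-$n$ computational sweep, exactly along the lines of the long chain of work cited (Linnik, Watson, McCurley, Ram\'ar\'e, Bertault, Boklan--Elkies, Elkies--Kaplan, and Siksek). The key identity that makes the seven-cube problem tractable is Watson's elementary congruence observation: for any integer $m$, one can write $6m = (m+1)^3 + (m-1)^3 - m^3 - m^3$, so modest algebraic manipulations allow one to represent residue classes mod $6$ as sums of few cubes once a positive multiple of $6$ of controlled size has been produced. First I would use this together with quantitative versions of the circle method for cubes (available since Vaughan and Wooley) to establish an effective bound $N_0$ such that every integer $n > N_0$ is a sum of at most $7$ positive cubes. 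This is the non-computational heart of the argument and is what Linnik first achieved qualitatively; the effective version is what Ram\'ar\'e and subsequent authors pushed down to a computationally reachable range.

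Once an explicit $N_0$ is available, I would reduce $N_0$ further using Boklan--Elkies-style smooth-number sieves: one fixes a convenient perfect cube $c^3$ close to $n$ and tries to realize $n - c^3$ as a sum of six cubes, recursively peeling off one cube at a time. For each peel, one only needs one value of $c$ for which the residual is known to be representable, and density estimates for sums of six (or five) cubes guarantee this succeeds for all $n$ above a computationally manageable threshold $N_1 \ll N_0$. Below $N_1$, I would run the sieve algorithm described in Section \ref{Computation} with $k=3$ and $j$ iterated from $1$ up to $7$, producing a 0/1 indicator bit-array of length $N_1$ that records, for each $n \leq N_1$, whether $n$ is a sum of at most $7$ positive cubes.

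The intersection of the indicator's zero positions with $\{n : n \leq N_1\}$ would, by construction, contain the putative exceptional set. The verification then reduces to checking that these zero positions are exactly the seventeen integers
$15, 22, 23, 50, 114, 167, 175, 186, 212, 231, 238, 239, 303, 364, 420, 428, 454$,
and confirming by the preceding analytic-plus-descent argument that no zero occurs for $n > N_1$. The main obstacle, and the reason this conjecture stood open from 1851 until 2016, is precisely closing the gap between the analytic threshold $N_0$ and a range $N_1$ that modern machines can sieve exhaustively: every refinement of the circle method, of minor-arc estimates for cubic exponential sums, and of the smooth-number descent is aimed at shrinking $N_0$, and one must be willing to handle residue classes $n \equiv 4, 5 \pmod 9$ separately (where three cubes can never suffice, forcing the descent to use at least four cubes at the bottom). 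I would expect the computational step itself, once $N_1$ is in reach, to be straightforward relative to the analytic bookkeeping that precedes it.
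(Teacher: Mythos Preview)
The paper does not supply its own proof of this lemma: it is stated as a result drawn from the literature (Jacobi's 1851 conjecture, completed by the chain Linnik--Watson--McCurley--Ramar\'e--Bertault--Boklan--Elkies--Siksek) and then used as a black box to finish the computation of the sets $\overline{\mathbf{B}_j^3}$ for $9 \le j \le 13$. So there is no in-paper argument to compare against; the lemma functions here purely as a cited input.

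Your proposal is an accurate high-level roadmap of how the cited literature actually establishes the result --- an effective circle-method bound $N_0$, a descent via Watson-type identities and smooth-cube peeling to bring the threshold down to a computable $N_1$, and then an exhaustive machine sweep below $N_1$ --- and in that sense it is correct as a \emph{sketch}. But it is only a sketch: you do not produce any explicit $N_0$, you do not carry out the descent that closes the gap to a feasible $N_1$, and you do not report a verified computation. Each of those steps is substantial (the reason the problem took $165$ years), and your write-up defers all of them with phrases like ``I would use'' and ``I would run.'' As a summary of the strategy it is fine and matches what the paper cites; as a self-contained proof it has the obvious gap that none of the hard work is actually done. For the purposes of this paper, simply citing Siksek's 2016 result, as the authors do, is the appropriate move.
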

\noindent At this point, the proof technique can be repeated as before and the sets $\overline{\mathbf{B}_j^3}$ for $j=13, 12, 11, 10, 9$ are confirmed. 

Similarly, Zenkin knew that in the setting of fourth powers, $n^*=77900162$ is optimal $(d=2)$, and computed $\mathbf{B}^4$. In 2000, Deshouillers, Hennecart, \& Landreau \cite{Deshouillers2000} computed exactly which positive integers cannot be expressed as the sum of at most $16$ nonnegative fourth powers. 
\begin{lemma}[Deshouillers, Hennecart, \& Landreau]\label{DHL}
    Every positive integer other than 
    \begin{align*}
    &47, 62, 63, 77, 78, 79, 127, 142, 143, 157, 158, 159, 207, 222, 223, 237, 238, 239, 287, 302, 303, 317, 318,\\
    &319, 367, 382, 383, 397, 398, 399, 447, 462, 463, 477, 478, 479, 527, 542, 543, 557, 558, 559, 607, 622,\\
    &623, 687, 702, 703, 752, 767, 782, 783, 847, 862, 863, 927, 942, 943, 992, 1007, 1008, 1022, 1023, 1087,\\
    &1102, 1103, 1167, 1182, 1183, 1232, 1247, 1248, 1327, 1407, 1487, 1567, 1647, 1727, 1807, 2032, 2272,\\
    &2544, 3552, 3568, 3727, 3792, 3808, 4592, 4832, 6128, 6352, 6368, 7152, 8672, 10992, 13792
    \end{align*}
    can be expressed as the sum of $16$ nonnegative fourth powers.
\end{lemma}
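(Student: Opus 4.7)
The plan is immediate given Lemma \ref{lemma_for_fourths}. That result guarantees that every positive integer greater than $13792$ is a sum of at most $16$ nonnegative biquadrates; padding such a representation with copies of $0^4$ promotes ``at most $16$'' to ``exactly $16$,'' so no integer above $13792$ can appear in the exception list. The entire content of Lemma \ref{DHL} therefore reduces to a finite check of the integers in $[1,13792]$, together with verification that the resulting set of exceptions coincides with the $96$ values displayed.

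For the finite check, I would run the sieve of Section \ref{Computation}, adapted to nonnegative rather than positive fourth powers. Allocate a bit-vector $v$ of length $13793$, set $v_0[0] = 1$ to represent the empty sum, and iteratively compute
\[
v_{j+1}[n] \;=\; \bigvee_{a \ge 0,\ a^4 \le n} v_j[n - a^4]
\]
for $j = 0, 1, \ldots, 15$. After step $j$, the marked indices are exactly the integers in $[0,13792]$ expressible as a sum of $j$ nonnegative fourth powers. Extracting the unmarked positive indices from $v_{16}$ yields the candidate exception list, which one then compares term-by-term against the $96$ entries in the statement.

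The only real obstacle is bookkeeping: $13793 \cdot 16$ bit updates are negligible computationally, but the comparison must be exact, and one needs confidence that no representation has been missed. As a cross-check, I would independently run the representation-finding routine of Section \ref{Methods} on each alleged exception to confirm no $(16,4)$-representation exists, and conversely exhibit an explicit $(16,4)$-representation for each integer in $[1,13792]$ not on the list. The clustering of the exceptional values around multiples and near-multiples of $16$ (e.g.\ $63, 79, 127, 143, \ldots$, continuing up to $13792$) reflects the $2$-adic obstruction that makes integers $\equiv 15 \pmod{16}$ hard to reach with few biquadrates, and a quick check that each listed exception satisfies the expected congruence constraints provides an additional sanity test on the output.
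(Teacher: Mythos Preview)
The paper does not give its own proof of this lemma; it is quoted as a result of Deshouillers, Hennecart, and Landreau \cite{Deshouillers2000} and used as input for Theorem~\ref{theorem_fourths}. Your reduction—invoke Lemma~\ref{lemma_for_fourths} to bound the exceptions by $13792$, then sieve the finite range—is correct and is in fact how the cited authors obtained the explicit list: the deep analytic work is entirely contained in Lemma~\ref{lemma_for_fourths} (itself a cited result here, not proven in this paper), after which the enumeration is routine. So your proposal matches the underlying argument, with the caveat that neither you nor the present paper is supplying the hard ingredient.
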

The proof technique can again be repeated to confirm the sets $\overline{\mathbf{B}_j^4}$ for $j=20, 19, 18$.

\section{Future Improvements}\label{Improvements}

Note that this proof technique suffices to establish the sets $\mathbf{B}_j^k$ for $j\geq g(k)+d$, and establish that the sets $\mathbf{B}_j^k$ for $G(k)+d < j < g(k)+d$ are finite. Finding lower values of $d$ would improve the results in this paper. Alternatively, improvements in Waring's problem would also improve the results of this paper, with substitution of a lower value of $G(k)$. The number $100000585294$ works in the proof of Theorems \ref{Theorem_2_fifths}, \& \ref{Theorem_3_fifths} because it is small enough for modest computers to find consecutive expressions as the sum of $3, 4, 5, \ldots, 40$ positive fifth powers, but there are larger numbers that may slightly improve the results of Theorems \ref{Theorem_2_fifths}, \& \ref{Theorem_3_fifths}. Fermat's Last Theorem guarantees no perfect fifth power is the sum of two fifth powers, but, for example, the smallest positive integer that is the sum of $2$ and $3$ positive fifth powers was computed by Scher \& Seidl \cite{ekl1998new}:
\[
563661204304422162432 = 14132^5+220^5 = 14068^5+6237^5+5027^5.
\]
How far could this technique work for fifths (and higher powers)? Finding a positive integer that is the sum of $2, 3, 4, \ldots, 40$ positive fifth powers would prove which numbers are not the sum of $j \geq 19$ positive fifth powers. This is the theoretical limit that this proof technique will allow for all positive powers without further improvements in Waring's problem.\\

The amount of numbers less than $N$ expressible as a sum of two fifth powers is bounded above by the number of lattice points bounded by the $x-$ axis, the curve $x^5+y^5=N$, and the line $y=x$, which is asymptotically equivalent of the bounded area $A$. This is also equivalent to half of the area bounded by the $x-$ and $y-$ axes and the curve $x^5+y^5=N.$ Furthermore, scaling down the figure by a factor of $\sqrt[5]{N}$ changes the curve to $x^5+y^5=1.$ The $x-$ intercept is at $(1,0),$ and the equation can be rewritten as $y = \sqrt[5]{1-x^5},$ giving a formula for the area $A'$ of the scaled-down figure:
\[ A' = \int_0^1 \sqrt[5]{1-x^5} dx.\]

Numerical integration shows that $0.9501 < A' < 0.9502,$ and $A' \approx 0.95015$.
Since the area in this case is two-dimensional, $A = \left(\sqrt[5]{N}\right)^2A' = N^\frac{2}{5}A' \approx 0.95015N^\frac{2}{5},$ and halving it gives $0.47508N^\frac{2}{5}.$ This is an asymptotic upper bound for the amount of numbers that are the sum of two fifth powers, since there may be some numbers that are the sum of two fifth powers in two or more ways (although this is, heuristically, highly unlikely). Taking the derivative of the sum, from 1 to $N$, of the asymptotic expected number of sums of two fifth powers, and evaluating at $N$ yields the probability that $N$ is expressible as a sum of two fifth powers:
\[ P_2(N) := \frac{dA}{dN} \approx 0.19003N^{-\frac{3}{5}}.\]

Similarly, to find the probability that a number is the sum of three fifth powers, let $A_3'$ be the volume bounded by the axes and the surface $x^5+y^5+z^5=1$. Define $A_3$ as the volume bounded by the $x$-axis, the lines $y=x,$ $z=y$ (such that $x \ge y \ge z$), and the surface $x^5+y^5+z^5=N$. Rearranging the first surface gives $y^5+z^5=1-x^5.$ Taking cross-sections of $A_3'$ with $x$ fixed yields a formula for its volume in terms of the earlier calculated result for the area under the curve $x^5+y^5=N$, with $N = 1-x^5$: 
\[ A_3' = \int_0^1 0.95015\left(1-x^5\right)^\frac{2}{5} dx \approx 0.86629.\]
Since the equation is symmetric, without loss of generality, let $x>y>z$. This reduces the number of integer points, and thus volume, by a factor of $3!$. Hence, 
\[A_3=\frac{1}{3!}N^\frac{3}{5}A_3' \approx 0.14438N^\frac{3}{5} \qquad P_3(N) := \frac{dA_3}{dN} = 0.08663N^{-\frac{2}{5}}.\]
The expected number of solutions to $v^5+w^5=x^5+y^5+z^5$ across the positive integers is
\[ \int_1^\infty P2(x)P3(x) dx = \int_1^\infty 0.01646x^{-1} dx = \mid_1^\infty 0.01646\ln(x) = \infty.\]
However, none of these solutions are likely to also have representations as a sum of four fifth powers. Given that $563661204304422162432$ is the smallest solution, the probability that it is the sum of four fifth powers can be calculated, along with the probability that any larger number is the sum of $2, 3,$ and $4$ fifth powers. Let $A_4'$ be the volume bounded by the axes and $w^5+x^5+y^5+z^5=1,$ or $x^5+y^5+z^5=1-w^5.$ Then,
\[A_4' = \int_0^1 6A_3(1-w^5) dw = \int_0^1 0.86629(1-w^5)^\frac{3}{5} dw = 0.76306.\]
There are $4!$ ways to order the variables; but it is only necessary to count those with $w\ge x\ge y\ge z.$ Thus,
\[A_4 = \frac{1}{4!}A_4'N^\frac{4}{5} = 0.03179N^\frac{4}{5} \qquad P_4(N) := \frac{dA_4}{dN} = 0.02544N^{-\frac{1}{5}}.\]
Plugging in 563661204304422162432 shows that the probability $p_1$ it is the sum of four positive fifth powers is 
\begin{align}\label{p1}
    P4(563661204304422162432) = 0.02544\cdot563661204304422162432^{-\frac{1}{5}} = 0.0000018.
\end{align} The probability that any larger number is the sum of 2, 3, and 4 positive fifth powers is
\begin{align}\label{p2}
\nonumber \int_{563661204304422162432}^\infty P_2(x)P_3(x)P_4(x) dx = & \int_{563661204304422162432}^\infty 0.0004188x^{-\frac{1}{5}} dx \\
\nonumber = & \ (0.00008376)\left(563661204304422162432\right)^{-\frac{1}{5}} \\ \approx & \ 9.39362\times10^{-9}. 
\end{align}
Adding \eqref{p1} and \eqref{p2} gives a final probability of 0.000001809 for the existence of a number that is $(2,5)$-representable, $(3,5)$-representable, and $(4,5)$-representable.

In general, the number of $(j,k)$-representations less than $n$ is asymptotically \[
P(n,j,k)\approx\frac{n^{j/k}}{j!}V(j,k)
\]
where $V(j,k)$ is the $j$-dimensional volume between the axes and the $j$-surface $x_1^k + \cdots + x_j^k = 1$. Consequently, taking the derivative of this counting function gives $P'(n,j,k)=\frac{n^{j/k-1}}{k(j-1)!}$ as the proportion of numbers near $n$ that are $(j,k)$-representable. If $P'(n,j,k)$ is small and there are few numbers with multiple representations, this is also approximately the heuristic probability that $n$ is $(j,k)$-representable.

For small $j$ and $k$, $V(j,k)$ can be calculated using an integral, although for large cases it may be more effective to use a Monte Carlo simulation. Heuristically, the ``probability" (assuming $(j,k)$-representations are randomly distributed) of $n$ being $(j_1,k_1), \ldots (j_m,k_m)$-representable is approximately 
\begin{align}\label{abcd}
C \cdot n^{j_1/k_1 + \cdots + j_m/k_m - m},
\end{align}
where $C$ is a constant. Integrating \eqref{abcd} from a lower bound $b$ for any solution to infinity gives the expected value for the quantity of such numbers over the positive integers:
\[\int_b^{\infty}C \cdot n^{j_1/k_1 + \cdots + j_m/k_m - m} = |_{b}^{\infty}C \cdot n^{j_1/k_1 + \cdots + j_n/k_n - n+1}.\] Since $C$ is usually smaller for larger $j_i$, $k_i$, and $m$, the existence of solutions increasingly depends on the exponent $E = j_1/k_1 + \cdots + j_m/k_m - m$. If $E < -1$, the upper bound of the integral is zero, leaving $C \cdot b^{j_1/k_1 + \cdots + j_m/k_m - m+1}$. If $E = -1$, the integration shown above is invalid and should yield a logarithm instead. The proper evaluation of the integral indicates there are infinitely many solutions, although sometimes there are none (as in the $n=3$ case of Fermat's Last Theorem). If $E > -1$, the upper bound of the integration is infinite. These observations make it possible to predict the minimum value of $d$ (from Table \ref{Table_Powers}) that will be successful. If the constants denoted here as $C$ are carried throughout the integration, setting the integral to 1 gives an approximation for how large the smallest solution $n^*$ will be. 

For example, when $k_1,\ldots,k_m=6$, if $d=3$, then $E = \frac{3}{6}+\frac{4}{6}+\frac{5}{6}-3=-1$ (exponents greater than 5 are omitted because representations eventually become so frequent that they break the probability model), whereas, if $d=2$, then $E = -\frac{5}{3}$. Solutions where $d=4$ are certainly expected, because $E = -\frac{1}{2} > -1$, but may be quite large.

Suppose a solution exists starting at a specified value $d$. Using all $j < k$, \[E = \sum_{i=d}^{k-1} \left(\frac{i}{k}-1\right) = \sum_{i=1}^{k-d} \frac{i}{k} = -\frac{(k-d)(k-d+1)}{2k} = -1.\] Hence, $(k-d)(k-d+1) = 2k$. Setting $x=k-d$ gives $x^2+x=2k$ or $x = \frac{-1\pm\sqrt{1+8k}}{2}$. Since the negative solution is absurd, as it makes the summation formula for $E$ an empty sum, the minimum value of $d$ expected to yield a solution is $d = k + \frac{1-\sqrt{1+8k}}{2}$. Note when the formula for $d$ yields an integer, the solution produced yields $E=-1$, and a solution may or may not exist. When $k=3$, this formula yields $d=1$, which violates Fermat's Last Theorem.

While it may be impractical to reduce $d$ for now, the results of this paper could also be improved by theoretical advancements in the classical Waring problem. Additionally, it appears that many more $\mathbf{B}_j^k$ (and, by extension, $\overline{\mathbf{B}_j^k}$) are finite than can be proven using this technique. To be able to completely determine more sets $\overline{\mathbf{B}_j^k}$, it would be helpful to know the largest number that is not $(j,k)$-representable for various $j$ and $k$. To this end, consider the following conjectures:
\begin{conjecture}
    Every positive integer greater than $77529941$ is the sum of exactly $10$ positive fifth powers.
\end{conjecture}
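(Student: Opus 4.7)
The plan is to parallel the architecture of Theorem~\ref{Theorem_2_fifths}, but targeting $j = 10$ rather than $j = 20$. The sieve of Section~\ref{Computation} has already confirmed $(10,5)$-representability for every $n$ with $77529941 < n < 10^9$, so the conjecture reduces to ruling out exceptions among $n \geq 10^9$.

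The first line of attack is to mimic the identity $n = a^5 + (n-a^5)$ driving Theorem~\ref{representation_existence}, but applied in the ``$j$-decreasing'' direction: if $n - a^5$ is $(9,5)$-representable for some admissible $a$, then $n$ is $(10,5)$-representable. To exploit this I would need a catalog of $(9,5)$-representable integers analogous to the $\mathbf{B}^5 \cup \overline{\mathbf{B}_j^5}$ tables produced here for $j \geq 20$. The set $\mathbf{B}_9^5$ is expected to be infinite, so a complete catalog is out of reach, but a density statement---that the $(9,5)$-non-representable integers are sparse at scale $n$---would suffice: for $n \geq 10^9$ there is enough freedom in choosing $a \in \{1, \ldots, \lfloor n^{1/5}\rfloor\}$ that some $n - a^5$ should avoid any sparse exceptional set.

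The second, independent line of attack is a Hardy--Littlewood asymptotic for $r_{10,5}(n)$, the number of representations of $n$ as an ordered sum of ten positive fifth powers. The heuristic in Section~\ref{Improvements} predicts $r_{10,5}(n) \gg n$, and a circle-method proof would establish this via minor-arc Weyl-sum bounds (for instance, Wooley's efficient congruencing for degree five) together with a uniform positive lower bound on the singular series.

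The main obstacle is that $j = 2k = 10$ sits precisely on the classical threshold for the circle method in Waring's problem: ten variables is considered inadequate for unconditional positive lower bounds on $r_{10,5}(n)$ at the current state of the art. Even granting a successful asymptotic, the implicit effective threshold $N_0$ would almost certainly lie far beyond what the sieve of Section~\ref{Computation} can reach, leaving a large middle range uncovered. For this reason the statement is posed as a conjecture; short-term progress is most likely to come from extending the computational verification past $10^9$, either corroborating the bound $77529941$ or identifying an unexpected exception.
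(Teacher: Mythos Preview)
The statement is labeled a \emph{conjecture} in the paper and is not proved there; the paper's only supporting evidence is the numerical verification, mentioned inside the proof of Theorems~\ref{Theorem_2_fifths} and~\ref{Theorem_3_fifths}, that every $m$ with $77529941 < m < 10^9$ is $(10,5)$-representable. Your proposal correctly identifies this status and accurately explains why the methods of the paper---the $n^*$ technique of Theorem~\ref{n*_application} combined with the known bound $G(5)\le 17$---cannot reach $j=10$, and why the Hardy--Littlewood circle method with only ten fifth powers is likewise below the current threshold. Your closing remark, that progress is most likely to come from pushing the sieve of Section~\ref{Computation} beyond $10^9$, is exactly in line with the paper's placement of the statement in Section~\ref{Improvements} as an open problem rather than a theorem.
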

\begin{conjecture}
Every positive integer greater than $229182966$ is the sum of $18$ positive sixth powers.
    


\end{conjecture}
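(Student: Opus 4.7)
The plan is to combine an exhaustive sieve computation with an application of Theorem \ref{n*_application}. First, I would run the sieve-type algorithm of Section \ref{Computation} for $j = 18$ and $k = 6$ up to a bound $M$ comfortably exceeding the claimed threshold $229182966$. This produces the complete (conjecturally finite) list of non-$(18,6)$-representable integers not exceeding $M$ and, in particular, verifies that $229182966$ is the maximum of that list.

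Next, to upgrade this to a statement about all $n > 229182966$, the natural route is Theorem \ref{n*_application} with $b + d = 18$. Using the $n^* = 41253168892$ produced in the paper, which is $(j,6)$-representable for every $5 \le j \le 77$ (so $d = 5$), this would force $b = 13$, i.e., a bound $G(6) \le 13$ on Waring's function for sixth powers. Alternatively, a newly found $n^*$ with $d \le 2$ could be paired with $b \le 16$. Either way, Theorem \ref{n*_application} would then yield $(18,6)$-representability for every $n$ exceeding $n^* + N$, where $N$ is the explicit Waring bound. The residual interval from $M$ up to $n^* + N$ would be closed by iterating Theorem \ref{representation_existence} from a small-$j$ sieve base case, exactly as in the proof of Theorem \ref{Theorem_3_fifths}.

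The crucial obstacle is this second step: the current best upper bound $G(6) \le 24$ (Vaughan \& Wooley) is far too weak to drive Theorem \ref{n*_application} down to $j = 18$, and no $n^*$ for sixth powers with $d < 5$ is presently known. The heuristic at the end of Section \ref{Improvements} predicts a theoretical minimum of $d = 3$ for $k = 6$, which would still demand $G(6) \le 15$ to work. Consequently, an unconditional proof of this conjecture presently requires either a substantive improvement of Waring's bound for sixth powers, a new computation producing an $n^*$ with very small $d$, or a brute-force sieve verification across an interval of size roughly $n^* + 1414564 \approx 4.1 \times 10^{10}$ (using Conjecture \ref{smallest n conj for 6ths}), which exceeds the reach of the modest computational resources employed here. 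Absent one of these, the statement must remain genuinely conjectural, even though the computational evidence for the exact threshold $229182966$ is overwhelming.
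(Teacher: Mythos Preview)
The paper presents this statement as a \emph{conjecture} in Section~\ref{Improvements} and offers no proof; it is listed among open problems whose resolution would sharpen the paper's results. Your conclusion that the statement must remain genuinely conjectural is therefore exactly in line with the paper, and your diagnosis of the obstacles---the gap between the best known bound $G(6)\le 24$ and what Theorem~\ref{n*_application} with $b+d=18$ would require, together with the heuristic floor $d\ge 3$ for $k=6$ from Section~\ref{Improvements}---matches the paper's own discussion.

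One minor slip: your third fallback, a ``brute-force sieve verification across an interval of size roughly $n^*+1414564\approx 4.1\times 10^{10}$,'' would not close the argument even if it were feasible. Granting Conjecture~\ref{smallest n conj for 6ths}, Theorem~\ref{n*_application} with the known $n^*$ (for which $d=5$) and $b=24$ delivers only $(29,6)$-representability beyond $n^*+N$, not $(18,6)$-representability. A finite sieve at $j=18$, however far it reaches, has nothing to splice onto: no currently available theorem supplies an ``all sufficiently large $n$ are $(18,6)$-representable'' endpoint. So the obstruction is not merely computational reach but the absence of any asymptotic input at $j=18$; this is precisely why the paper leaves the statement as a conjecture.
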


\begin{conjecture}
    Every positive integer greater than $317476671$ is the sum of exactly $25$ positive seventh powers.
\end{conjecture}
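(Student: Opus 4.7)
The plan is to adapt the two-phase strategy used in the proofs of Theorems \ref{Theorem_2_fifths} and \ref{Theorem_3_fifths}: a direct computational verification over a finite initial interval, followed by a theoretical extension via Theorem \ref{representation_existence} and Theorem \ref{n*_application}. The specific value $317476671$ is the candidate last exception emerging from the sieve algorithm of Section \ref{Computation}.

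First I would run the sieve-type algorithm of Section \ref{Computation}, iteratively for $j = 1, 2, \ldots, 25$, to compute for every $n$ in an interval $(317476671, N_0)$ whether $n$ admits a $(25, 7)$-representation. The $2N_0$ bits of Boolean storage per level of $j$ is the binding memory cost, so $N_0$ is limited by available memory. The expected output is that every $n$ in the computed interval is $(25, 7)$-representable, which both confirms $317476671$ as the last exception within that range and verifies directly that $317476671$ itself is non-$(25, 7)$-representable.

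Next, I would apply Theorem \ref{representation_existence} iteratively, starting from $(25, 7)$-representability on $(317476671, N_0)$, to push $j$ upward to $26, 27, \ldots$ over ever-growing intervals, with each step requiring the choice of an $a$ satisfying $a^7 - (a-1)^7 < N - n$. Once $j$ reaches a value handled by Theorem \ref{n*_application} applied to $n^* = 822480142011$ with $d = 7$ and $b \ge G(7)$, the tail of that particular $(j, 7)$-representability is closed. Padding by copies of $1^7$ then transfers representability back down to $j = 25$, but only within the finite range already reached by the iteration.

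The main obstacle is that this combination of tools cannot propagate $(25, 7)$-representability to the infinite tail: both Theorem \ref{representation_existence} and Theorem \ref{n*_application} only move $j$ upward, and the current bound $G(7) \le 33$ forces $b + d \ge 40 > 25$ in Theorem \ref{n*_application}. Closing the $j = 25$ tail unconditionally appears to require either a sharpening of Waring's problem to $G(7) \le 18$, so that $b + d = 25$ becomes achievable with $d = 7$, or an entirely new argument that treats $(j, k)$-representability directly for $j < G(k) + d$. Until such an input arrives, the best the proposed strategy can deliver is a rigorous upper bound on the largest non-$(25, 7)$-representable integer via extensive computation, with $317476671$ being the value suggested by the sieve.
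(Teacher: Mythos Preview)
Your analysis is correct and matches the paper's own treatment: this statement is stated in the paper as a \emph{conjecture}, not a theorem, and the paper offers no proof. The obstruction you identify is exactly the one the paper points to in Section~\ref{Improvements}: the method of Theorems~\ref{n*_application} and~\ref{representation_existence} can only establish $(j,7)$-representability unconditionally for $j \ge G(7)+d \ge 40$, whereas $j=25$ lies well below this threshold. The value $317476671$ is the computational prediction from the sieve, and the paper explicitly remarks that proving finiteness of $\overline{\mathbf{B}_j^k}$ for such small $j$ would require either a substantial improvement in the known upper bound for $G(7)$ or a genuinely new technique. So there is nothing to compare against---you have correctly diagnosed both the intended strategy and why it cannot be completed with current tools.
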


\begin{conjecture}
    Every positive integer greater than $904339959$ is the sum of exactly $47$ positive eighth powers.
\end{conjecture}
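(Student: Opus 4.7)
The plan is to combine the $n^*$ framework of Theorem \ref{n*_application} with direct sieving. Set $b = 42$ (from Lemma \ref{lemma_for_eighths}), $d = 5$ so that $b + d = 47$ matches the target index, and $N = 858367748$ (from Conjecture \ref{smallest n conj for 8ths}). If a witness $n^*$ can be found that is $(j, 8)$-representable simultaneously for every $j$ in $\{5, 6, \ldots, 46\}$, then Theorem \ref{n*_application} yields $(47, 8)$-representability of every positive integer $n > N + n^*$. Running the bit-packed sieve of Section \ref{Computation} at $j = 47$ over $[1, N + n^*]$ then extracts the full list of exceptions; a single run of the \texttt{PowersRepresentations}-style algorithm from Section \ref{Methods} on $904339959$ certifies that it is not $(47, 8)$-representable, establishing sharpness.

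To locate $n^*$, I would follow the intersection-sieve routine for computing $n^*$: enumerate $(5, 8)$-representations in an initial window, keep those that also admit $(6, 8)$-, $(7, 8)$-, $\ldots$, $(46, 8)$-representations, and output the minimum. For the predicted threshold $904339959$ to match $N + n^*$ exactly, one needs $n^* \le 904339959 - 858367748 = 45972211$, and this narrow window guides where to search first. Should no witness exist in that range, one would enlarge the search window (and accordingly adjust the predicted threshold), or switch to the iterative strategy used in the proofs of Theorems \ref{Theorem_2_fifths} and \ref{Theorem_3_fifths}: sieve at a low starting index $j_0$, then apply Theorem \ref{representation_existence} repeatedly to climb from $j_0$ up to $47$ while extending the range of certified representability, finally matching against the asymptotic regime supplied by Vaughan \& Wooley.

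\textbf{The main obstacle} is twofold. First, $d = 5$ at $k = 8$ sits essentially at the heuristic floor $k + (1 - \sqrt{1 + 8k})/2 \approx 4.47$ derived in Section \ref{Improvements}, so any $n^*$ with this consecutive-$j$ property is expected to be exceedingly rare and computationally expensive to locate -- the paper's current witness $n^* = 17373783550950$ only achieves $d = 9$, and reducing $d$ by four at this exponent is a significant jump. Second, the entire argument is conditional on Conjecture \ref{smallest n conj for 8ths}: absent an effective form of Vaughan \& Wooley's bound $G(8) \le 42$, the threshold $858367748$ cannot be taken as given, so even with $n^*$ in hand the conclusion would remain conditional on the true effective threshold matching the conjectured value. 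If either obstacle shifts, the predicted value $904339959$ would move correspondingly, which is consistent with the conjectural rather than theorematic status of this final statement.
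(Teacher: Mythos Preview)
The paper offers no proof of this statement: it is listed as a \emph{conjecture} in Section~\ref{Improvements}, obtained by direct numerical sieving, and Table~\ref{Table_Powers} marks $G(1,8)=47$ with an asterisk for ``new conjecture.'' So there is no paper proof to compare against; you have correctly diagnosed why the statement remains open, namely that the paper's best witness $n^*=17373783550950$ only reaches $d=9$ (giving $G(k)+d=51$, not $47$), and that the threshold $N=858367748$ from Conjecture~\ref{smallest n conj for 8ths} is itself unproven.

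Your outlined strategy is structurally the right one --- it is exactly the template the paper uses for the cases it \emph{can} prove --- but one point is confused. You write that ``for the predicted threshold $904339959$ to match $N+n^*$ exactly, one needs $n^* \le 45972211$,'' and then use this to guide the search window. This conflates two different quantities. The number $904339959$ is the (conjectured) largest element of $\mathbf{B}_{47}^8$, i.e.\ the largest integer that is \emph{not} $(47,8)$-representable. The number $N+n^*$ is merely an upper bound above which Theorem~\ref{n*_application} guarantees representability; one then sieves the interval $[1,N+n^*]$ and \emph{discovers} $904339959$ as the largest exception. There is no reason for these to coincide, and in the paper's worked examples they never do: for fifths, $N+n^*\approx 10^{11}$ while $\max(\mathbf{B}_{20}^5)$ is far smaller. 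So the constraint $n^*\le 45972211$ is spurious --- any $n^*$ achieving $d=5$ would work, regardless of size, provided one can sieve up to $N+n^*$ (or bridge the gap via Theorem~\ref{representation_existence} as in the proof of Theorems~\ref{Theorem_2_fifths}--\ref{Theorem_3_fifths}). The genuine obstacle is exactly the one you name second: no $n^*$ with $d=5$ at $k=8$ is known, and the heuristic of Section~\ref{Improvements} suggests such witnesses, if they exist at all, are extremely rare.
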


\begin{conjecture}\label{conj for 9ths}
   Every positive integer greater than $967214052$ is the sum of exactly $121$ positive ninth powers.
\end{conjecture}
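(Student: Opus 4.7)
The plan is to mimic the proof pipeline used for Theorems \ref{Theorem_2_fifths} and \ref{Theorem_3_fifths}, specialized to $k=9$ and $j=121$. First I would deploy the sieve algorithm from Section \ref{Computation} to verify by direct computation that every positive integer $n$ in the range $967214052 < n \le M$ is $(121,9)$-representable, for some computationally achievable upper bound $M$. This confirms the explicit numerical claim that $967214052$ is the largest non-representable integer up to $M$.

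To extend past $M$, I would invoke Theorem \ref{n*_application} with $n^{*}=25636699123453928$. Since this $n^{*}$ is the sum of $14, 15, 16, \ldots, 561$ positive ninth powers (the same $n^{*}$ used in the proof template for Conjecture \ref{Theorem_2_ninths}), it is $(j,9)$-representable for every $j$ in $[14, 120]$, so the theorem applies with $d=14$ and $b=107$. Its conclusion is that every positive integer greater than $N_{107}+n^{*}$ is $(121,9)$-representable, where $N_{107}$ is the largest positive integer not expressible as a sum of at most $107$ positive ninth powers. To close the remaining interval $(M,\,N_{107}+n^{*}]$, I would iterate Theorem \ref{representation_existence}, starting from a separately verified base case at some smaller power count $j_0$, and climbing one step at a time by choosing $a$ values that enlarge the covered interval by $a^{9}$; once the covered range surpasses $N_{107}+n^{*}$ at a count $j_1 \le 121$, appending $121-j_1$ copies of $1^{9}$ pads the representation to exactly $121$ terms.

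The principal obstacle is that $N_{107}$ is unknown, and the analogous threshold governing the $G(9)\le 50$ bound is precisely the open Question \ref{smallest n conj for 9ths}. Worse, $n^{*} \approx 2.56\times 10^{16}$ is already enormous, so any rigorous deduction via Theorem \ref{n*_application} would force the bridging computation to run out to at least that magnitude, well beyond current memory-bounded sieve capability. A more promising avenue would be to locate a much smaller $n^{*}$ whose consecutive $(j,9)$-representations still cover $14 \le j \le 120$; the heuristic density analysis in Section \ref{Improvements} shows, however, that such $n^{*}$ become vanishingly rare as $k$ grows, which is exactly why the statement can presently only be offered as a conjecture rather than a theorem.
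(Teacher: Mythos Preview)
The statement is presented in the paper as a \emph{conjecture}, not a theorem, and the paper offers no proof for it; it appears only in Section~\ref{Improvements} as a computational target suggested for future work. Your proposal correctly recognizes this: you outline the natural adaptation of the Dubouis--Zenkin pipeline (sieve verification up to some $M$, then Theorem~\ref{n*_application} with the ninth-power $n^{*}$, then bridging via Theorem~\ref{representation_existence} and padding with $1^{9}$'s), and you accurately identify the two fatal obstructions---the unknown effective threshold $N$ beyond which every integer is a sum of at most $b$ positive ninth powers (cf.\ Question~\ref{smallest n conj for 9ths}), and the size $n^{*}\approx 2.56\times 10^{16}$, which pushes the required bridging computation far beyond current feasibility. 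In short, your analysis and the paper's treatment agree: the claim rests solely on direct computation up to a finite bound, and no proof is currently available.
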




\bibliographystyle{plain}
\bibliography{PositivePowers}{}

\section*{Appendix}\label{Appendix}
The Appendix for this paper can be found by clicking or scanning the qr-code below:
\begin{center}
   \qrcode{http://brennanbenfield.com/appendix}
\end{center}
\end{document}